\newtheorem{thm}{Theorem}[section]
\newtheorem{lem}[thm]{Lemma}
\newtheorem{rem}[thm]{Remark}
\newfont{\iams}{msbm9}
\newcommand{\commentbis}[1]{}
\newcommand{\be}{\begin{eqnarray}}
	\newcommand{\ee}{\end{eqnarray}}
\newcommand{\beno}{\begin{eqnarray*}}
	\newcommand{\eeno}{\end{eqnarray*}}
\newcommand{\barr}[1]{\begin{array}{#1}}
	\newcommand{\earr}{\end{array}}
\newcommand{\beq}{\begin{equation}}
	\newcommand{\eeq}{\end{equation}}
\newcommand{\beqa}{\begin{eqnarray}}
	\newcommand{\eeqa}{\end{eqnarray}}
\newcommand{\mD}{{\mathcal{D}}}
\title
{
	Kernel Based High Order ``Explicit'' Unconditionally Stable Scheme for Nonlinear Degenerate Advection-Diffusion Equations
}
\author{
Andrew Christlieb
 \thanks{Department of Computational Mathematics, Scienc and Engineering, Department of Mathematics  and  Department of Electrical Engineering, Michigan State University, East Lansing, MI, 48824. {\tt christli@.msu.edu}}. Research is supported in part by AFOSR grants
 FA9550-12-1-0343, FA9550-12-1-0455, and FA9550-15-1-0282, and NSF grant DMS-1418804.
 \and
	Wei Guo \thanks{
	Department of Mathematics and Statistics, Texas Tech University, Lubbock, TX,  79409. {\tt
		weimath.guo@ttu.edu}. Research
		is supported in part by NSF grant NSF-DMS-1620047.
	}
 \and
	Yan Jiang\thanks{Department of Mathematics, Michigan State University, East Lansing, MI, 48824. {\tt jiangyan@math.msu.edu}}.
}
\date{\today}
\begin{document}

\maketitle
	
\begin{abstract}
In this paper, we present a novel numerical scheme for solving a class of nonlinear degenerate parabolic equations with non-smooth solutions. The proposed method relies on a special kernel based formulation of the solutions found in our early work on the method of lines transpose and successive convolution. In such a framework, a high order weighted essentially non-oscillatory (WENO) methodology and a nonlinear filter are further employed to avoid spurious oscillations. High order accuracy in time is realized by using the high order explicit strong-stability-preserving (SSP) Runge-Kutta method. Moreover, theoretical investigations of the kernel based formulation combined with an explicit SSP method indicates that the combined scheme is unconditionally stable and up to third order accuracy.  Evaluation of the kernel based approach is done with a fast $\mathcal{O}(N)$ summation algorithm. The new method allows for much larger time step evolution compared with other explicit schemes with the same order accuracy, leading to remarkable computational savings.
\end{abstract}

\begin{keywords}
 integral solution, unconditionally stable, weighted essentially non-oscillatory methodology, high order accuracy, nonlinear degenerate advection-diffusion equation 
\end{keywords}

 

\section{Introduction}

In this paper, we are interested in numerically solving the following nonlinear, possibly degenerate, parabolic equation
\begin{align}
\label{eq:ad}
u_{t}+f(u)_{x}=g(u)_{xx},\quad x\in[a,b],
\end{align}
where $g'(u)\geq0$ and $g'(u)$ can vanish for some values of $u$.
Such an equation arises in a wide range of applications, e.g., collisional transport models in plasmas,  radiative transport, porous medium flow, etc. 
The equation \eqref{eq:ad} has similar properties to hyperbolic conservation laws, including possible existence of discontinuous solutions and sharp fronts, and a finite speed of propagation of wave fronts.  When considering models of the form of equation \eqref{eq:ad}, it is necessary to design a numerical scheme capable of capturing these features. 

A variety of schemes have been developed in the literature, e.g., finite volume schemes \cite{bessemoulin2012finite}, finite difference WENO schemes \cite{liu2011high, hajipour2012high, abedian2013high}, local discontinuous Galerkin methods \cite{zhang2009numerical}, kinetic schemes \cite{aregba2004explicit}, and relaxation schemes \cite{cavalli2007high}, among others. Most of these methods are in the method of lines (MOL) framework, meaning that the spatial variable is first discretized, then the numerical solution is updated in time by coupling a suitable time integrator. The most commonly used time evolution methods are the strong-stability-preserving Runge-Kutta (SSP RK) schemes and SSP multi-step schemes \cite{gottlieb2001strong, shu2002survey, gottlieb2005high}. SSP methods preserve the strong stability in some desired norm of an appropriate spatial discretization in conjunction with the forward Euler time stepping, thus preventing spurious oscillations near spatial discontinuities. However, it is well known that an explicit time discretization  does have a restriction on the time step in order to maintain  stability. For example, for advection problems, maintaining stability usually requires the time step $\Delta t\propto \Delta x$, where $\Delta x$ is the spatial mesh size. Solving diffusion problems with explicit time stepping methods  introduces a more stringent restriction $\Delta t\propto \Delta x^2$ for stability. Hence, using an explicit SSP method for equation \eqref{eq:ad} means that satisfying the stability condition demands  $\Delta t\propto \Delta x^2$.  There are two other approaches one can utilize in these situations including Implicit-Explicit (IMEX) methods and fully implicit methods \cite{ascher1997implicit,Hairer1996}.  Both of these approaches are effective in the sense that they permit larger time steps without stability issue.  Except for the backwards Euler method and the second order fully implicit method proposed by Ketcheson \cite{ketcheson2011step},  this class of methods often requires that $\Delta t\propto \Delta x$ to maintain non-oscillatory numerical solutions near discontinuities or steep gradients that arise from the equation \eqref{eq:ad}.
Meanwhile, this approach needs to invert matrices or nonlinear operators resulting from the spatial discretization  at each time step.
This typically involves the use of some form of iterative solvers, Krylov or multi-grid methods.  
In practice, full matrix inversions for problems in the form of equation \eqref{eq:ad} may become prohibitively complicated and costly, especially when memory is extremely limited.

An alternative approach to solving \eqref{eq:ad} is the method of lines transpose (MOL$^T$), also known as Rothes method or transverse method of lines \cite{salazar2000theoretical, schemann1998adaptive, causley2014method} in the literature. In the MOL$^T$ framework, the discretization is first carried out for the temporal variable, resulting in a boundary value problem (BVP) at  discrete time levels. Then a preferred BVP solver is applied to advance the numerical solution.  A notable advantage of the MOL$^T$ approach is that an implicit method, e.g. the backward Euler method, can be used in the first step, and then in the second step, the operator of the BVP, e.g, the modified Helmholtz operator for solving wave equations, is inverted analytically using an integral formulation based on a Green's/kernel function. Because the method utilizes direct inversion of the operator, the MOL$^T$ approach eliminates the need to solve linear systems at each time step.
Moreover, many well-established  fast convolution algorithms can be readily used in reducing the computational complexity of the scheme from $\mathcal{O}(N^2)$ to $\mathcal{O}(N)$ with $N$ being the number of discrete mesh points  \cite{causley2013method,greengard1987fast,barnes1986hierarchical}. 
In \cite{causley2014higher,causley2016method}, a novel technique known as successive convolution (or resolvent expansions) is developed and analyzed for the wave equation and the Allen-Cahn equation. The resulting scheme is unconditionally stable, but would be rarely applied on general nonlinear problems possibly due to the complex formulation. Additionally, the MOL$^T$ framework has been studied to solve the linear and non-linear heat equation \cite{causley2017method, causley2016method, kropinski2011fast, jia2008krylov}, Maxwell's equations\cite{cheng2017asymptotic} as well as others. 
Recently, an implicit high order SSP RK method and a robust WENO integral formulation have been incorporated into the MOL$^T$ framework  so that the method can be applied to the advection equation and the Vlasov equation \cite{christlieb2016weno}. With the help of the SSP property and the WENO based quadrature, the method is able to take large time steps, and at the same time, generate a solution being free of oscillations. Unfortunately this scheme as designed does not directly extend to problems in the form of equation  \eqref{eq:ad}.

In this paper, we will propose a novel numerical scheme  following the MOL$^T$ philosophy of employing a kernel based approach for degenerate parabolic equations of the form \eqref{eq:ad}. The major distinction between the newly proposed scheme and our previous work \cite{christlieb2016weno}, is that we employ explicit time stepping methods that are traditionally used in the MOL formulation.  In particular, we start with  explicit SSP RK methods for the time discretization, which can be easily applied to nonlinear problems.   Following the idea of solving the BVP as in the MOL$^T$ framework, we  transform the spatial derivatives into a kernel based representation.  This approach makes the method effectively ``implicit" at each stage of the explicit SSP RK method, but without the need to invert any matrices. The robust WENO methodology in conjunction with a new nonlinear filter  is adopted with the aim to effectively capture sharp gradients of the solution without producing spurious oscillations. In addition, a special parameter $\beta$ is introduced in the scheme formulation and we are able to make this scheme  A-stable through a careful choice of $\beta$. The unconditional stability property of the scheme can be established accordingly. In summary,  the proposed  scheme we have designed for solving equation \eqref{eq:ad} is robust, high order accurate, matrix free,  unconditionally stable, and efficient.

The paper is organized as follows. In Section 2, we represent the spatial derivatives as infinite series, in which each term relies on a special kernel based formulation of the solution. In Section 3, the approximation accuracy of the associated partial sum is studied. We introduce the WENO-based quadrature as well as the nonlinear filter for evaluation of the partial sum in Section 4. The fully-discrete scheme for solving \eqref{eq:ad} is designed by coupling the partial sum formulation with the high order explicit SSP RK method, and a stability analysis for linear problems is established in Section 5.  This method can be extended directly to high dimensional problems, and we discuss the details of the two-dimensional formulation in Section 6. We present several numerical tests in Section 7 to verify the performance of the proposed scheme. Finally, we conclude with a brief discussion in Section 8. 

\section{Representation of differential operators}
In this section, we will review a class of representations of the first spatial derivative $\partial_{x}$ and the second spatial derivative $\partial_{xx}$ from the nonlinear convection-diffusion equation \eqref{eq:ad}. Such representations are based on a successive convolution of the underlying kernel functions and will serve as the key building block of the proposed scheme.  Below, we first introduce the operator $\mathcal{L}$ and the associated operator  $\mathcal{D}$. Then, both differential operators $\partial_x$ and $\partial_{xx}$ are represented by infinite series of $\mathcal{D}$. 
Such an idea was first used in \cite{causley2014higher} for designing an MOL$^T$ A-stable scheme for solving linear wave equations. 

\subsection{The second order derivative $\partial_{xx}$}
\label{sec:der_xx}
We consider the following differential operator: 
\begin{align}
\label{eq:operL0}
\mathcal{L}_{0}=\mathcal{I}-\frac{1}{\alpha^2}\partial_{xx},  \quad x \in[a,b],
\end{align}
where $\mathcal{I}$ is the identity operator and $\alpha>0$ is a constant. Suppose $w(x)$ satisfies the differential equation
\begin{align}
\label{eq:molt0}
\mathcal{L}_{0}[w,\alpha](x)=w(x)-\frac{1}{\alpha^2}w_{xx}(x)=v(x),
\end{align}
where $v(x)$ is a given function.
Then, by analytically inverting operator $\mathcal{L}_0$, we can obtain the explicit expression of $w(x)$ as
\begin{align}
\label{eq:L0inverse}
w(x)=\mathcal{L}_{0}^{-1}[v,\alpha](x)=
I^{0}[v,\alpha](x) + A_{0}e^{-\alpha (x-a)} + B_{0}e^{- \alpha (b-x)},
\end{align}
where
\begin{align}
\label{eq:I0}
I^{0}[v,\alpha](x):=\frac{\alpha}{2} \int_a^b e^{-\alpha |x-y|}v(y)dy,
\end{align} 
and $A_{0}$ and $B_{0}$ are constants determined by the boundary conditions, see \cite{causley2014method}. For example, for periodic boundary conditions, i.e., $w(a)=w(b)$ and $w_{x}(a)=w_{x}(b)$, we have
\begin{align}
\label{eq:bcD0per}
A_{0}=\frac{I^{0}[v,\alpha](b)}{1-\mu}\ \ \ \text{and} \ \ \ B_{0}=\frac{I^{0}[v,\alpha](a)}{1-\mu},
\end{align}
with $\mu=e^{-\alpha(b-a)}$. 

We then define the operator $\mathcal{D}_0$ as 
\begin{align}
\label{eq:d0}
\mathcal{D}_{0}=\mathcal{I}-\mathcal{L}_{0}^{-1}.
\end{align} 
Clearly, $\mathcal{L}_{0}=(\mathcal{I}-\mathcal{D}_{0})^{-1}$. Moreover, by the definition \eqref{eq:operL0}, the second derivative can be rewritten as 
\begin{align}
\label{eq:partial2}
\frac{1}{\alpha^2}\partial_{xx}=\mathcal{I}-\mathcal{L}_{0}
=\mathcal{L}_{0}(\mathcal{L}_{0}^{-1}-\mathcal{I}) 
=-\mathcal{D}_{0}(\mathcal{I}-\mathcal{D}_{0})^{-1}
=-\sum_{p=1}^{\infty}\mathcal{D}_{0}^{p},
\end{align}
where $\mathcal{D}_{0}^{p}$ is successively defined as $\mathcal{D}_{0}^{p}=\mathcal{D}_{0}[\mathcal{D}_{0}^{p-1}]$. 
Hence, $g(u)_{xx}$ from \eqref{eq:ad} can be represented as 
\begin{align}
\label{eq:g_xx}
g(u)_{xx} = -\alpha^2 \sum_{p=1}^{\infty} \mathcal{D}^{p}_{0}[g(u),\alpha](x).
\end{align}

\subsection{The first derivative $\partial_{x}$}
Following a similar idea, we are able to deal with the first derivative $\partial_x$ as well. Note that, when designing a numerical method for solving a hyperbolic conservation law \begin{equation}
u_{t}+f(u)_x=0,\label{eq:cl}
\end{equation} we have to consider the propagation direction of the wave solution to ensure stability of the scheme. To this end, we introduce operators $\mathcal{L}_{L}$ and $\mathcal{L}_{R}$ and the associated operators $\mathcal{D}_{L}$ and $\mathcal{D}_{R}$ to account for waves traveling in opposite directions:
\begin{align}
\label{eq:operLL}
\mathcal{L}_{L}&=\mathcal{I}+\frac{1}{\alpha}\partial_{x}, \ \ \
\mathcal{D}_{L}=\mathcal{I}-\mathcal{L}^{-1}_{L};\\
\label{eq:operLR}
\mathcal{L}_{R}&=\mathcal{I}-\frac{1}{\alpha}\partial_{x}, \ \ \
\mathcal{D}_{R}=\mathcal{I}-\mathcal{L}^{-1}_{R},
\end{align} 
where $x\in[a,b]$ and $\alpha>0$ is a constant. 

Let us consider $\mathcal{L}_{L}$ and $\mathcal{D}_{L}$ first. Assume $w(x)$ satisfies the differential equation 
$$\mathcal{L}_L[w,\alpha](x) = w(x)+\frac{1}{\alpha}\partial_xw(x)= v(x), $$
where $v(x)$ is a given function. 
As with $\mathcal{L}_0$, we can analytically invert $\mathcal{L}_{L}$ as follows:
\begin{align}
\label{eq:LLinverse}
\mathcal{L}_{L}^{-1}[v,\alpha](x)=I^{L}[v,\alpha](x) + A_{L}e^{-\alpha (x-a)},
\end{align}
where
\begin{align}
\label{eq:IL}
I^{L}[v,\alpha](x)=\alpha \int_a^x e^{-\alpha (x-y)}v(y)dy,
\end{align}
and the constant $A_L$ is determined by the boundary condition. For example, for periodic boundary conditions \eqref{eq:operLL},
$A_{L}$ is taken as 
\begin{align}
\label{eq:bcDLper}
A_L=\frac{I^{L}[v,\alpha](b)}{1-\mu}.
\end{align}
Similar to $\frac{1}{\alpha^2}\partial_{xx}$, we are able to represent $\frac1\alpha\partial_{x}$ using an infinite series of the operator $\mathcal{D}_L$
\begin{equation}
\label{eq:ll}
\frac{1}{\alpha}\partial_{x}=\mathcal{L}_{L}-\mathcal{I}
=\sum_{p=1}^{\infty}\mathcal{D}_{L}^{p}.
\end{equation}
Note that, $I_{L}[v,\alpha](x)$ only depends on the function values of $v$ from the left end point $a$ to $x$. On the other hand, it is well-known that, for the hyperbolic conservation law \eqref{eq:cl}, the information of the solution propagates from left to right over time if the flux function $f(u)$ has a positive derivative, i.e.,  $f'(u)\geq0$. Hence, it is reasonable to represent $f(u)_x$ as
\begin{align}
\label{eq:partialL}
\partial_{x}f(u)=
\alpha\sum_{p=1}^{\infty}\mathcal{D}_{L}^{p}[f(u),\alpha](x).
\end{align}

Similarly,  when $f'(u)\leq0$, we can represent $f(u)_x$ using the following infinite series of $\mathcal{D}_R$:
\begin{align}
\label{eq:partialR}
\partial_{x}f(u)
=-\alpha\sum_{p=1}^{\infty}\mathcal{D}_{R}^{p}[f(u),\alpha](x),
\end{align}
where $\mathcal{D}_R$ is given in \eqref{eq:operLR} with
\begin{align}
\label{eq:LRinverse}
& \mathcal{L}_{R}^{-1}[v,\alpha](x)=I^{R}[v,\alpha](x) + B_{R} e^{-\alpha (b-x)},\\
\label{eq:IR}
& I^{R}[v,\alpha](x)=\alpha \int_x^{b} e^{-\alpha (y-x)}v(y)dy,
\end{align}
and $B_R$ is a constant determined by the boundary condition. 
For example, for  periodic boundary conditions,
\begin{align}
\label{eq:bcDRper}
B_R=\frac{I^{R}[v,\alpha](a)}{1-\mu}.
\end{align}


In the case of $f(u)$ not being a monotone function of $u$, we can employ the following global ``flux splitting" strategy:
\begin{equation}\label{eq:fluxsplitting}
f(u)=\frac{1}{2}(f^{+}(u)+f^{-}(u)),
\end{equation}
with $df^{+}(u)/du\geq0$ and $df^{-}(u)/du\leq0$. We then use \eqref{eq:partialL} to represent $f^{+}(u)_x$, and \eqref{eq:partialR} to represent $f^{-}(u)_x$. The most commonly used splitting strategy is the Lax-Friedrich splitting
$$f^{\pm}(u)=\frac{1}{2}(f(u)\pm cu), \ \ \ \text{with} \ \ \ c=\max_{u}|f'(u)|,$$
which has been widely used in design of high order finite difference schemes for conservation laws \cite{shu1988efficient}.

In summary,  $-f(u)_{x}+g(u)_{xx}$ can be represented as a linear combination of three infinite series:
\begin{align}
	\label{eq:expall}
	-\alpha_{L}\sum_{p=1}^{\infty}\mathcal{D}_{L}^{p}[f^{+}(u),\alpha_{L}]
	+\alpha_{R}\sum_{p=1}^{\infty}\mathcal{D}_{R}^{p}[f^{-}(u),\alpha_{R}]
	-\alpha_{0}^2 \sum_{p=1}^{\infty} \mathcal{D}^{p}_{0}[g(u),\alpha_{0}].
\end{align}

\section{Approximation of partial sums}
So far, we have shown that the derivatives $\partial_{x}$ and $\partial_{xx}$  can be represented as infinite series. In numerical simulations, we have to truncate the series and only compute the corresponding partial sum.  In particular, \eqref{eq:expall} is approximated by the $k^{th}$ partial sum
\begin{align}
\label{eq:partial}
-f(u)_{x}+g(u)_{xx}\approx-\alpha_{L}\sum_{p=1}^{k}\mathcal{D}_{L}^{p}[f^{+}(u),\alpha_{L}]
+\alpha_{R}\sum_{p=1}^{k}\mathcal{D}_{R}^{p}[f^{-}(u),\alpha_{R}]
-\alpha_{0}^2 \sum_{p=1}^{k} \mathcal{D}^{p}_{0}[g(u),\alpha_{0}].
\end{align}
In this section, we will theoretically investigate the truncation error incurred. Below, we restrict our attention to periodic boundary conditions
\begin{align}
\label{eq:bcper}
u(a,t)=u(b,t), \ \ \ u_{x}(a,t)=u_{x}(b,t), \ \ \ t\geq0,
\end{align}
and the following special homogeneous boundary condition
\begin{align}
\label{eq:bcdir}
\partial^{p}_{x}u(a,t)=0, \ \ \ \partial^{p}_{x}u(b,t)=0, \ \ \ t\geq0, \ \ \ p\geq1.
\end{align} 
Note that a solution that is constant near the boundary satisfies such a homogeneous boundary condition.
The investigation of other boundary conditions is left to the future work.

\subsection{Periodic boundary conditions}
 
In the case of periodic boundary conditions \eqref{eq:bcper}, it is straightforward to require
\begin{align}
\label{eq:bcDper1}
 \mathcal{D}^{p}_{0}[g(u),\alpha_{0}](a) &= \mathcal{D}^{p}_{0}[g(u),\alpha_{0}](b), \notag\\
 \mathcal{D}^{p}_{L}[f^{+}(u),\alpha_{L}](a) &= \mathcal{D}^{p}_{L}[f^{+}(u),\alpha_{L}](b),\\ 
 \mathcal{D}^{p}_{R}[f^{-}(u),\alpha_{R}](a) &= \mathcal{D}^{p}_{R}[f^{-}(u),\alpha_{R}](b),\notag
\end{align}
for $p=1,\,2,\,3,\ldots,\, k$. Consequently, we can obtain the coefficients $A_0$, $B_0$, $A_L$, and $B_R$ by \eqref{eq:bcD0per}, \eqref{eq:bcDLper} and \eqref{eq:bcDRper}. 

With the boundary treatments \eqref{eq:bcDper1}, we are able to establish the following theorem, which provides error estimates when the infinite series \eqref{eq:g_xx},  \eqref{eq:partialL} and \eqref{eq:partialR} are truncated by the corresponding $k^{th}$ partial sum.  
\begin{thm}
	\label{thm1}
	Suppose $v(x)$ is a periodic smooth function.
	\begin{enumerate}
		\item Consider the operator $\mathcal{D}_{0}$ with the boundary treatment $\mathcal{D}_{0}(a)=\mathcal{D}_{0}(b)$, If $v(x)\in\mathit{C}^{2k+2}[a,b]$, then we have 
		\begin{align}
		\| \partial_{xx}v(x) + \alpha^2\sum_{p=1}^{k}\mathcal{D}_{0}^{p}[v,\alpha](x) \|_{\infty} \leq C \left(\frac{1}{\alpha}\right)^{2k} \|\partial^{2k+2}_{x}v(x)\|_{\infty}
		\end{align}
		where $C$ is a constant only depending on $k$.
		
		\item Consider the operator  $\mathcal{D}_{L}$ and $\mathcal{D}_{R}$ with the boundary treatment $\mathcal{D}_{L}(a)=\mathcal{D}_{L}(b)$ and $\mathcal{D}_{R}(a)=\mathcal{D}_{R}(b)$, respectively.
		 If $v(x)\in\mathit{C}^{k+1}[a,b]$, then we have 
		\begin{align}
		\| \partial_{x}v(x) - \alpha \sum_{p=1}^{k}\mathcal{D}_{L}^{p}[v,\alpha](x) \|_{\infty} \leq C
		\left(\frac{1}{\alpha}\right)^{k}  \|\partial^{k+1}_{x}v\|_{\infty},
		\end{align}
		and
		\begin{align}
		\| \partial_{x}v(x) + \alpha\sum_{p=1}^{k}\mathcal{D}_{R}^{p}[v,\alpha](x) \|_{\infty} \leq C
		\left(\frac{1}{\alpha}\right)^{k} \|\partial^{k+1}_{x}v\|_{\infty},
		\end{align}
		where $C$ is a constant depending only on $k$.
	
	\end{enumerate}
\end{thm}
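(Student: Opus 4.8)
The plan is to identify the truncation error \emph{exactly} by an operator identity, and then reduce each estimate to the boundedness of $\mathcal L^{-1}$ on $L^\infty[a,b]$. I would work throughout on the space of smooth periodic functions, using (from the construction in \cite{causley2014method}) that with the periodic boundary coefficients \eqref{eq:bcD0per}, \eqref{eq:bcDLper}, \eqref{eq:bcDRper}, each of $\mathcal L_0^{-1}$, $\mathcal L_L^{-1}$, $\mathcal L_R^{-1}$ maps periodic functions to periodic functions and is a genuine two-sided inverse of $\mathcal L_0$, $\mathcal L_L$, $\mathcal L_R$ there; consequently $\mathcal D_0=\mathcal I-\mathcal L_0^{-1}$, $\mathcal D_L$, $\mathcal D_R$ preserve periodicity, the iterates $\mathcal D_\bullet^{\,p}$ are well defined, and every operator involved commutes with $\partial_x$ (on periodic functions each is a Fourier multiplier --- for instance $\mathcal D_0$ has symbol $(\xi/\alpha)^2/\big(1+(\xi/\alpha)^2\big)$). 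The first computational step is the finite geometric-sum identity $(\mathcal I-\mathcal D)\sum_{p=1}^k\mathcal D^p=\mathcal D-\mathcal D^{k+1}$ which, combined with $\mathcal L=(\mathcal I-\mathcal D)^{-1}$, gives $\sum_{p=1}^k\mathcal D^p=\mathcal L\mathcal D\,(\mathcal I-\mathcal D^{k})$. Using $\mathcal L_0\mathcal D_0=\mathcal L_0-\mathcal I=-\tfrac1{\alpha^2}\partial_{xx}$, and likewise $\mathcal L_L\mathcal D_L=\tfrac1\alpha\partial_x$, $\mathcal L_R\mathcal D_R=-\tfrac1\alpha\partial_x$, this becomes
\begin{align}
\partial_{xx}v+\alpha^2\sum_{p=1}^k\mathcal D_0^p[v,\alpha]&=\partial_{xx}\mathcal D_0^k[v,\alpha],\\
\partial_x v\mp\alpha\sum_{p=1}^k\mathcal D_{L,R}^p[v,\alpha]&=\partial_x\mathcal D_{L,R}^k[v,\alpha],
\end{align}
with the upper sign for $L$ and the lower for $R$. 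Reading $\mathcal L_0\mathcal D_0=-\tfrac1{\alpha^2}\partial_{xx}$ as $\mathcal D_0=-\tfrac1{\alpha^2}\mathcal L_0^{-1}\partial_{xx}$ and using commutativity, $\mathcal D_0^k=\tfrac{(-1)^k}{\alpha^{2k}}(\mathcal L_0^{-1})^k\partial_x^{2k}$, so the right-hand side of the first identity equals $\tfrac{(-1)^k}{\alpha^{2k}}(\mathcal L_0^{-1})^k\partial_x^{2k+2}v$; likewise $\partial_x\mathcal D_L^k v=\tfrac1{\alpha^k}(\mathcal L_L^{-1})^k\partial_x^{k+1}v$ and $\partial_x\mathcal D_R^k v=\tfrac{(-1)^k}{\alpha^k}(\mathcal L_R^{-1})^k\partial_x^{k+1}v$.

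It then only remains to bound $\mathcal L^{-1}$ on $L^\infty$. From the explicit representations \eqref{eq:L0inverse}--\eqref{eq:I0}, \eqref{eq:LLinverse}--\eqref{eq:IL}, \eqref{eq:LRinverse}--\eqref{eq:IR} I would check that $\|\mathcal L_0^{-1}\|_{\infty\to\infty},\|\mathcal L_L^{-1}\|_{\infty\to\infty},\|\mathcal L_R^{-1}\|_{\infty\to\infty}\le C_0$ for an absolute constant $C_0$ (indeed one may take $C_0=1$): the convolution kernels $\tfrac\alpha2 e^{-\alpha|x-y|}$, and $\alpha e^{-\alpha(x-y)}$ on $\{y\le x\}$ resp.\ $\alpha e^{-\alpha(y-x)}$ on $\{y\ge x\}$, all have $L^1_y$-mass at most $1$, and the boundary-correction coefficients are controlled because $|I^0[v,\alpha](b)|\le\tfrac12(1-\mu)\|v\|_\infty$ and $|I^{L}[v,\alpha](b)|,|I^{R}[v,\alpha](a)|\le(1-\mu)\|v\|_\infty$, which cancels the factor $1/(1-\mu)$ in \eqref{eq:bcD0per}, \eqref{eq:bcDLper}, \eqref{eq:bcDRper}. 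Composing, $\|(\mathcal L^{-1})^k\|_{\infty\to\infty}\le C_0^{\,k}=:C(k)$. Taking $\|\cdot\|_\infty$ in the identities of the first paragraph then gives the three bounds at once, the $\partial_{xx}$ estimate requiring $v\in C^{2k+2}[a,b]$ and the two $\partial_x$ estimates only $v\in C^{k+1}[a,b]$.

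I expect the real obstacle to be the justification used at the outset --- namely that on the periodic class the formal operator calculus is legitimate: that $\mathcal L^{-1}$ with the prescribed boundary coefficients is a true two-sided inverse of $\mathcal L$ (so that $(\mathcal I-\mathcal D)^{-1}=\mathcal L$ is an honest operator identity, not merely a formal series), and that each iterate $\mathcal D_\bullet^{\,p}v$ stays periodic and smooth enough for all the differentiations above to make sense. This is essentially the content of the construction in \cite{causley2014method}, and it can alternatively be made entirely transparent by passing to Fourier series, where every operator above is a bounded Fourier multiplier with an explicit symbol and the whole algebra is immediate; the Fourier route does not, however, by itself supply the $L^\infty$ operator bound of the second paragraph, which is why I would keep the explicit-kernel estimate there.
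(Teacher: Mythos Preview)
Your proof is correct and, in fact, more streamlined than the paper's. The paper does not use the geometric-sum identity directly at the operator level; instead it proves an auxiliary lemma (Lemma~\ref{lem1}) by repeated integration by parts, which expands a single application of $\mathcal D_0$ as
\[
\mathcal D_0[v,\alpha]=-\sum_{p=1}^k\alpha^{-2p}\partial_x^{2p}v-\alpha^{-(2k+2)}\mathcal L_0^{-1}[\partial_x^{2k+2}v,\alpha],
\]
and then computes $\mathcal D_0^2,\mathcal D_0^3,\ldots$ by substituting this expansion back into itself, keeping track of all cross terms. The outcome is that $\sum_{p=1}^k\mathcal D_0^p[v,\alpha]+\alpha^{-2}\partial_{xx}v$ equals $\alpha^{-(2k+2)}Q_k$, where $Q_k$ is a \emph{linear combination} of $(\mathcal L_0^{-1})^p[\partial_x^{2k+2}v]$ for $p=1,\ldots,k$ with $k$-dependent coefficients; the final estimate then uses $\|\mathcal L_0^{-1}\|_{\infty\to\infty}\le C_0$ exactly as you do.

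Your operator-calculus route bypasses the iterative bookkeeping and produces the remainder as the single clean term $(-1)^k\alpha^{-2k}(\mathcal L_0^{-1})^k\partial_x^{2k+2}v$, which is both shorter and sharper (the constant $C$ is just $C_0^{\,k}$ rather than a sum of such powers). The price is that you must justify commutativity of $\mathcal L_0^{-1}$ with $\partial_{xx}$ on the periodic class, which you do via the Fourier-multiplier viewpoint; the paper never invokes commutativity explicitly because its integration-by-parts argument keeps derivatives and convolutions in a fixed order throughout. One minor point: your claim that ``one may take $C_0=1$'' does not quite follow from the crude kernel-plus-boundary estimate you sketch (that gives $C_0\le 2$), though it does follow from the maximum principle for $\mathcal I-\alpha^{-2}\partial_{xx}$ with periodic boundary conditions; either way this does not affect the argument.
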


To prove this theorem, we first introduce the following lemma regarding operator $\mathcal{D}_{*}$, where $*$ can be $0$, $L$ and $R$.
\begin{lem}
	\label{lem1} Suppose $v(x)$ is a periodic smooth function.
	\begin{enumerate}
		\item For the operator $\mathcal{D}_{0}$ with the boundary treatment $\mathcal{D}_{0}(a)=\mathcal{D}_{0}(b)$,  we have 
		\begin{align}
		\mathcal{D}_{0}[v,\alpha](x)
		=&  -\sum_{p=1}^{k}\left(\frac{1}{\alpha}\right)^{2p} \partial^{2p}_{x}v(x)  -\left(\frac{1}{\alpha}\right)^{2k+2}\mathcal{L}^{-1}_{0}[\partial^{2k+2}_{x}v,\alpha](x).
		\end{align}
		if $v(x)\in\mathit{C}^{2k+2}[a,b]$.
		
		\item For the operators $\mathcal{D}_{L}$ and $\mathcal{D}_{R}$ with the boundary treatment $\mathcal{D}_{L}(a)=\mathcal{D}_{L}(b)$ and $\mathcal{D}_{R}(a)=\mathcal{D}_{R}(b)$, respectively, we have 
		\begin{subequations}
			\begin{align}
			\mathcal{D}_{L}[v,\alpha](x)
			=&  -\sum_{p=1}^{k}\left(-\frac{1}{\alpha}\right)^{p} \partial^{p}_{x}v(x)   
			+ \left(-\frac{1}{\alpha}\right)^{k+1} \mathcal{L}^{-1}_{L}[\partial^{k+1}_{x}v,\alpha](x),\\
			\mathcal{D}_{R}[v,\alpha](x)
			=&  -\sum_{p=1}^{k}\left(\frac{1}{\alpha}\right)^{p} \partial^{p}_{x}v(x)   
			- \left(\frac{1}{\alpha}\right)^{k+1} \mathcal{L}^{-1}_{R}[\partial^{k+1}_{x}v,\alpha](x),
			\end{align}
		\end{subequations}
		if $v(x)\in\mathit{C}^{k+1}[a,b]$.
	\end{enumerate} 
\end{lem}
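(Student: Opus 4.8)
The plan is to prove the identities by expanding the resolvent $\mathcal{L}_*^{-1}$ as a finite Neumann-type series with an explicit remainder, using the defining relation $\mathcal{D}_* = \mathcal{I} - \mathcal{L}_*^{-1}$ together with the fact that $\mathcal{L}_0 = \mathcal{I} - \frac{1}{\alpha^2}\partial_{xx}$, $\mathcal{L}_L = \mathcal{I} + \frac{1}{\alpha}\partial_x$, and $\mathcal{L}_R = \mathcal{I} - \frac{1}{\alpha}\partial_x$. First I would treat the case $\mathcal{D}_L$. Let $w = \mathcal{L}_L^{-1}[v,\alpha]$, so that $w + \frac{1}{\alpha}\partial_x w = v$, i.e. $w = v - \frac{1}{\alpha}\partial_x w$. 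Then $\mathcal{D}_L[v,\alpha] = v - w = \frac{1}{\alpha}\partial_x w$. Substituting the relation $w = v - \frac{1}{\alpha}\partial_x w$ into itself repeatedly (equivalently, differentiating and iterating) yields, after $k$ steps,
\begin{align}
w = \sum_{p=0}^{k}\left(-\frac{1}{\alpha}\right)^{p}\partial_x^{p} v + \left(-\frac{1}{\alpha}\right)^{k+1}\partial_x^{k+1} w.
\end{align}
Since $w = \mathcal{L}_L^{-1}[v,\alpha]$ and, by the periodic boundary treatment, differentiation commutes with $\mathcal{L}_L^{-1}$ (so $\partial_x^{k+1} w = \mathcal{L}_L^{-1}[\partial_x^{k+1}v,\alpha]$), we get $\mathcal{D}_L[v,\alpha] = v - w = -\sum_{p=1}^{k}\left(-\frac{1}{\alpha}\right)^{p}\partial_x^{p}v + \left(-\frac{1}{\alpha}\right)^{k+1}\mathcal{L}_L^{-1}[\partial_x^{k+1}v,\alpha]$, which is the claimed formula. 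The $\mathcal{D}_R$ case is identical with $\partial_x \mapsto -\partial_x$, producing the sign change $\left(-\frac{1}{\alpha}\right)^p \mapsto \left(\frac{1}{\alpha}\right)^p$.

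For $\mathcal{D}_0$, I would run the same argument with the second-order operator: writing $w = \mathcal{L}_0^{-1}[v,\alpha]$ gives $w - \frac{1}{\alpha^2}\partial_{xx}w = v$, hence $w = v + \frac{1}{\alpha^2}\partial_{xx}w$, and $\mathcal{D}_0[v,\alpha] = v - w = -\frac{1}{\alpha^2}\partial_{xx} w$. Iterating $k$ times,
\begin{align}
w = \sum_{p=0}^{k}\left(\frac{1}{\alpha}\right)^{2p}\partial_x^{2p}v + \left(\frac{1}{\alpha}\right)^{2k+2}\partial_x^{2k+2}w,
\end{align}
and using $\partial_x^{2k+2}w = \mathcal{L}_0^{-1}[\partial_x^{2k+2}v,\alpha]$ (again justified by the periodic/homogeneous boundary treatment, which makes $\mathcal{L}_0^{-1}$ commute with $\partial_x^2$) yields $\mathcal{D}_0[v,\alpha] = -\sum_{p=1}^{k}\left(\frac{1}{\alpha}\right)^{2p}\partial_x^{2p}v - \left(\frac{1}{\alpha}\right)^{2k+2}\mathcal{L}_0^{-1}[\partial_x^{2k+2}v,\alpha]$.

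The main obstacle is the commutation step: one must verify carefully that, under the prescribed boundary treatment $\mathcal{D}_*(a) = \mathcal{D}_*(b)$, the operator $\mathcal{L}_*^{-1}$ genuinely commutes with $\partial_x$ (resp.\ $\partial_{xx}$) when applied to the relevant smooth functions — equivalently, that differentiating the boundary-value problem for $w$ reproduces the same type of boundary-value problem for $\partial_x w$ with the same formula for the constants $A_*, B_*$. This hinges on $v$ being periodic and smooth (so that all derivatives $\partial_x^p v$ inherit periodicity), and on the explicit formulas \eqref{eq:bcD0per}, \eqref{eq:bcDLper}, \eqref{eq:bcDRper} for the boundary coefficients being stable under differentiation; I would check this by a short direct computation, e.g. differentiating \eqref{eq:L0inverse} and confirming it agrees with $\mathcal{L}_0^{-1}[\partial_{xx}v,\alpha]$. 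Once this is in hand, the telescoping/iteration is purely algebraic, and bounding the remainder term in Theorem~\ref{thm1} follows from the elementary operator bound $\|\mathcal{L}_*^{-1}[\phi,\alpha]\|_\infty \le \|\phi\|_\infty$ (which is immediate from the kernel representations \eqref{eq:I0}, \eqref{eq:IL}, \eqref{eq:IR} since the kernels integrate to at most one) applied with $\phi = \partial_x^{k+1}v$ or $\partial_x^{2k+2}v$.
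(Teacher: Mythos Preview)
Your proposal is correct and follows essentially the same route as the paper. The paper's proof in the appendix establishes the one-step recursion $\mathcal{D}_{0}[v,\alpha]=-\frac{1}{\alpha^{2}}\mathcal{L}_{0}^{-1}[v_{xx},\alpha]$ by integrating $I^{0}[v,\alpha]$ by parts twice and checking that the boundary terms combine with the periodic coefficients $A_{0},B_{0}$ to reproduce exactly $\mathcal{L}_{0}^{-1}[v_{xx},\alpha]$, then iterates $k$ times; your ``commutation'' step $\partial_{x}^{2k+2}\mathcal{L}_{0}^{-1}[v]=\mathcal{L}_{0}^{-1}[\partial_{x}^{2k+2}v]$ is precisely this integration-by-parts verification, just phrased operator-theoretically, and the direct computation you propose to carry out is the same calculation the paper performs. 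One small slip: from your expansion $w=\sum_{p=0}^{k}(-\tfrac{1}{\alpha})^{p}\partial_{x}^{p}v+(-\tfrac{1}{\alpha})^{k+1}\partial_{x}^{k+1}w$ together with $\partial_{x}^{k+1}w=\mathcal{L}_{L}^{-1}[\partial_{x}^{k+1}v]$, the remainder in $v-w$ carries a \emph{minus} sign, not the plus you wrote (this matches what one expects from the Neumann expansion and is harmless for the subsequent $L^{\infty}$ bound in Theorem~\ref{thm1}).
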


The proof of the lemma heavily relies on integration by parts, and it is provided in the appendix. Below, we provide the proof of Theorem \ref{thm1}.

\begin{proof}
		For brevity, we only show the details of the proof for case 1. Following a similar argument, one can easily prove the case 2. 
		
		First, by repeating the proof of Lemma \ref{lem1} with $\mathcal{D}_{0}(a)=\mathcal{D}_0(b)$, we have the following equality for any $m$ with $0\leq m<k$:
		\begin{align*}
		\mathcal{D}_{0}[\partial_{x}^{2m}v,\alpha](x)= -\sum_{p=m+1}^{k} \left( \frac{1}{\alpha} \right)^{2(p-m)}\partial^{2p}_{x}v(x) - \left( \frac{1}{\alpha} \right)^{2(k+1-m)} \mathcal{L}^{-1}_{0}[\partial^{2k+2}_{x}v,\alpha](x).
		\end{align*}
		Meanwhile, it is easy to verify that $\partial_{xx}\mathcal{L}_{0}^{-1}[v,\alpha](x) = \alpha^2\mathcal{L}_{0}^{-1}[v,\alpha](x)-\alpha^2 v(x) =\alpha^2\mathcal{D}_{0}[v,\alpha](x) $. 
		Therefore, we have
		$\partial_{xx}\mathcal{D}_{0}[v,\alpha](x) =\partial_{xx}v(x)+\alpha^2\mathcal{D}_{0}[v,\alpha](x)$.
		Furthermore, for $0\leq p\leq k$, there is a general form 
		\begin{align*}
		\partial^{2p}_{x}\mathcal{D}_{0}[v,\alpha](x)
		=& \sum_{m=1}^{p}\alpha^{2(p-m)}\partial^{2m}_{x}v(x) + \alpha^{2p}\mathcal{D}_{0}[v,\alpha](x)\\
		=& - \sum_{m=p+1}^{k}\alpha^{2(m-p)}\partial^{2(m-p)}_{x}v(x) - \left( \frac{1}{\alpha} \right)^{2(k+1-p)} \mathcal{L}^{-1}_{0}[\partial^{2k+2}_{x}v,\alpha](x). 
		\end{align*}
		Next, let us consider the operator $\mathcal{D}_{0}^2=\mathcal{D}_{0}[\mathcal{D}_{0}]$. Since the periodic boundary treatment \eqref{eq:bcDper1} is imposed for $\mathcal{D}_{0}$, we deduce that
		\begin{small}
		\begin{align*}
		\mathcal{D}_{0}^{2}[v,\alpha](x) 
		=& -\sum_{p=1}^{k-1} \left( \frac{1}{\alpha} \right)^{2p}\partial^{2p}_{x}\mathcal{D}_{0}[v,\alpha](x) - \left( \frac{1}{\alpha} \right)^{2k} \mathcal{L}^{-1}_{0}[\partial^{2k}_{x}\mathcal{D}_{0}[v,\alpha],\alpha](x)\\
		=& -\sum_{p=1}^{k-1} \left(\frac{1}{\alpha} \right)^{2p} \left( -\sum_{m=p+1}^{k} \left( \frac{1}{\alpha} \right)^{2(m-p)}\partial^{2m}_{x}v(x) - \left( \frac{1}{\alpha} \right)^{2(k+1-p)} \mathcal{L}^{-1}_{0}[\partial^{2k+2}_{x}v,\alpha](x) \right)\\
		& - \left( \frac{1}{\alpha} \right)^{2k} \mathcal{L}^{-1}_{0}[-\frac{1}{\alpha^2}\mathcal{L}^{-1}_{0}[\partial^{2k+2}_{x}v,\alpha],\alpha](x)\\
		=& \sum_{p=2}^{k}(p-1)\left(\frac{1}{\alpha}\right)^{2p} \partial^{2p}_{x}v(x) + (k-1) \left(\frac{1}{\alpha}\right)^{2k+2} \mathcal{L}^{-1}_{0}[\partial^{2k+2}_{x}v,\alpha](x)
		+ \left( \frac{1}{\alpha} \right)^{2k+2} \left(\mathcal{L}_{0}^{-1}\right)^2 [\partial^{2k+2}_{x}v,\alpha](x).
		\end{align*}
		\end{small}
		Therefore, 
		\begin{align*}
		\mathcal{D}_{0}[v,\alpha](x)+\mathcal{D}_{0}^{2}[v,\alpha](x)=-\frac{1}{\alpha^2}\partial_{xx}v(x) + \sum_{p=3}^{k}(p-2)\left(\frac{1}{\alpha}\right)^{2p} \partial^{2p}_{x}v(x) + \left(\frac{1}{\alpha}\right)^{2k+2}Q_{2}(x),
		\end{align*}
		where $Q_{2}(x)=(k-2)\mathcal{L}^{-1}_{0}[\partial^{2k+2}_{x}v,\alpha](x) + \left(\mathcal{L}^{-1}_{0}\right)^2[\partial^{2k+2}_{x}v,\alpha](x)$. Repeating the process, and finally, we arrive at
		$$\sum_{p=1}^{k}\mathcal{D}^{p}_{0}[v,\alpha](x)=-\frac{1}{\alpha^2}\partial_{xx}v(x) + \left(\frac{1}{\alpha}\right)^{2k+2} Q_{k}(x),$$
		where $Q_{k}(x)$ is a linear combination of functions $\left(\mathcal{L}^{-1}_{0}\right)^{p}[\partial^{2k+2}_{x}v,\alpha](x)$, $p=1,2,\ldots,k$. Note that, for any $w(x)\in\mathit{C}[a,b]$, we have
		$$\|\mathcal{L}^{-1}_{0}[w,\alpha](x)\|_{\infty}\leq C_{0} \|w\|_{\infty},$$
		where $C_{0}$ is a constant independent of $w$ and $\alpha$. Then, there is a constant $C$ only depending on $k$, such that
		$$\|\partial_{xx}v(x)+\alpha^2\sum_{p=1}^{k}\mathcal{D}^{p}_{0}[v,\alpha](x)\|_{\infty}=\|\left(\frac{1}{\alpha}\right)^{2k}Q_{k}(x)\|_{\infty}\leq C\left(\frac{1}{\alpha}\right)^{2k} \|\partial_{x}^{2k+2}v(x)\|_{\infty},$$
		which completes the proof.
\end{proof}

\begin{rem}
	For the numerical schemes formulated below, we will choose 
	\begin{subequations}
		\begin{align}
		\alpha_{0}&=\sqrt{\frac{\beta}{b\Delta t}}, \ \ \ b=\max_{u}|g'(u)|,\\
		\alpha_{L}&=\alpha_{R}=\frac{\beta}{c\Delta t}, \ \ \ c=\max_{u}|f'(u)|,
		\end{align}
	\end{subequations}
	in \eqref{eq:partial}.
	Here, $\Delta t$ denotes the time step and $\beta$ is a prescribed constant independent of $\Delta t$. Define
	\begin{align}
	\label{eq:H}
	\mathcal{H}[u](x)=& -\frac{\beta}{c\Delta t} \sum_{p=1}^{k}\mathcal{D}_{L}^{p}[f^{+}(u),\frac{\beta}{c\Delta t}](x) 
	+ \frac{\beta}{c\Delta t} \sum_{p=1}^{k}\mathcal{D}_{R}^{p}[f^{-}(u),\frac{\beta}{c\Delta t}](x) \nonumber\\
	& -\frac{\beta}{b\Delta t}\sum_{p=1}^{k}\mathcal{D}_{0}^{p} [g(u),\sqrt{\frac{\beta}{b\Delta t}}](x),	\end{align}
	which will approximate $-f(u)_x+g(u)_{xx}$ with accuracy $\mathcal{O}(\Delta t^{k})$.
\end{rem}

\subsection{Homogeneous boundary condition}
With the boundary condition $\partial_{x}^{p}u(a)=0$ and $\partial_{x}^{p}u(b)=0$, $p\geq1$, we require
\begin{align}
\label{eq:bcDhomo}
& \mathcal{D}^{p}_{0}[g(u),\alpha_{0}](a)=0,\quad \mathcal{D}^{p}_{0}[g(u),\alpha_{0}](b)=0, \notag\\
& \alpha_{L}\mathcal{D}^{p}_{L}[f^{+}(u),\alpha_{L}](a) - \alpha_{R}\mathcal{D}^{p}_{R}[f^{-}(u),\alpha_{R}](a)=0,\\ 
& \alpha_{L}\mathcal{D}^{p}_{L}[f^{+}(u),\alpha_{L}](b) - \alpha_{R}\mathcal{D}^{p}_{R}[f^{-}(u),\alpha_{R}](b)=0,\notag
\end{align}
for $p=1,\,2,\,3,\ldots,\, k$. The coefficients are obtained from the following formula:
\begin{itemize}
\item The operator $\mathcal{D}_{0}$ is required to satisfy
	\begin{align}
	\label{eq:gxxhomo}
	\mathcal{D}_{0}[v,\alpha](a)=\mathcal{D}_{0}[v,\alpha](b)=0
	\end{align}
	with a given function $v(x)$. Then, we have
	\begin{align}
	& A_{0}=\frac{ \mu\left(I^{0}[v,\alpha](b)-v(b)\right) - \left(I^{0}[v,\alpha](a)-v(a)\right)}{1-\mu^2}, \\
	& B_{0}=\frac{ \mu\left(I^{0}[v,\alpha](a)-v(a)\right) - \left(I^{0}[v,\alpha](b)-v(b)\right)}{1-\mu^2}.
	\end{align}
	\item The operators $\mathcal{D}_{L}$ and $\mathcal{D}_{R}$ are required to satisfy
	\begin{align}
	\label{eq:fxhomo}
	\mathcal{D}_{L}[v_{1},\alpha](a)-\mathcal{D}_{R}[v_{2},\alpha](a)=0, \ \ \ \text{and} \ \ \ \mathcal{D}_{L}[v_{1},\alpha](b)-\mathcal{D}_{R}[v_{2},\alpha](b)=0
	\end{align}
 	with given functions $v_{1}(x)$ and $v_{2}(x)$. Then,
	\begin{align}
	& A_{L}=\frac{\mu\left(v_{2}(b)-v_{1}(b)+I^{L}[v_{1},\alpha](b)\right) - \left(v_{2}(a)-v_{1}(a)-I^{R}[v_{2},\alpha](a)\right)}{1-\mu^2}, \\
	& B_{R}= \frac{\left(v_{2}(b)-v_{1}(b)+I^{L}[v_{1},\alpha](b)\right) - \mu \left(v_{2}(a)-v_{1}(a)-I^{R}[v_{2},\alpha](a)\right)}{1-\mu^2}.
	\end{align}
\end{itemize}

By analogy with Theorem \ref{thm1}, we can establish the error estimate for the partial sum \eqref{eq:partial}. We omit the proof, since it is quite similar to that of Theorem \ref{thm1}.

\begin{thm}
	\label{thm2}
	Suppose $v(x)$ is a function defined on $[a,b]$ with the homogeneous boundary condition that $\partial_{x}^{p}v(a)=\partial_{x}^{p}v(b)=0$, for $p\geq1$.
	\begin{enumerate}
		\item Consider the operator $\mathcal{D}_{0}$ with the boundary treatment \eqref{eq:gxxhomo}. If $v(x)\in\mathit{C}^{2k+2}[a,b]$, then we have 
		\begin{align}
		\| \partial_{xx}v(x) + \alpha^2\sum_{p=1}^{k}\mathcal{D}_{0}^{p}[v,\alpha](x) \|_{\infty} \leq C \left(\frac{1}{\alpha}\right)^{2k} \|\partial^{2k+2}_{x}v(x)\|_{\infty},
		\end{align}
		where $C$ is a constant only depending on $k$.
		
		\item Assume $v(x)=v_{1}(x)+v_{2}(x)$ and both $v_1(x)$ and $v_2(x)$ satisfy the homogeneous boundary condition. Consider the operator  $\mathcal{D}_{L}$ and $\mathcal{D}_{R}$ with the boundary treatment \eqref{eq:fxhomo}.
		 If $v_{1}(x),v_{2}(x)\in\mathit{C}^{k+1}[a,b]$, then we have 
		\begin{align}
		\| \partial_{x}v(x) - \left(\alpha \sum_{p=1}^{k}\mathcal{D}_{L}^{p}[v_{1},\alpha](x) - \alpha\sum_{p=1}^{k}\mathcal{D}_{R}^{p}[v_{2},\alpha](x) \right)\|_{\infty} \leq C
		\left(\frac{1}{\alpha}\right)^{k}  \|\partial^{k+1}_{x}v(x)\|_{\infty},
		\end{align}
		where $C$ is a constant depending only on $k$.
	
	\end{enumerate}
\end{thm}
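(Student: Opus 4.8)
The plan is to follow the proof of Theorem~\ref{thm1} almost verbatim, the only structural change being that the periodic boundary treatment \eqref{eq:bcDper1} is replaced by the homogeneous one in \eqref{eq:gxxhomo}--\eqref{eq:fxhomo}. Two ingredients of the periodic argument must be re-established under the new treatment: (i) the one-step ``Taylor expansion with integral remainder'' identities of Lemma~\ref{lem1}, and (ii) the fact that the iterated operators $\mathcal{D}_0^p$, $\mathcal{D}_L^p$, $\mathcal{D}_R^p$ applied to the given data still satisfy boundary conditions that allow the next application of those identities. Once (i) and (ii) are in hand, the telescoping computation and the uniform estimates $\|\mathcal{L}_0^{-1}[w,\alpha]\|_\infty,\ \|\mathcal{L}_L^{-1}[w,\alpha]\|_\infty,\ \|\mathcal{L}_R^{-1}[w,\alpha]\|_\infty\le C\|w\|_\infty$ carry over unchanged and the theorem follows.

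For part~1 the key remark is that $A_0,B_0$ in \eqref{eq:gxxhomo} are chosen precisely so that $\mathcal{D}_0[w,\alpha](a)=\mathcal{D}_0[w,\alpha](b)=0$ for every input $w$, i.e.\ $\mathcal{L}_0^{-1}[w,\alpha]$ reproduces $w$ at the two endpoints. Granting this, for $v\in C^{2k+2}[a,b]$ with $\partial_x^{2p}v(a)=\partial_x^{2p}v(b)=0$ ($p\ge1$) I would establish the Lemma~\ref{lem1}(1) identity by applying $\mathcal{L}_0$ to its two candidate sides --- they have the same image, just as in the periodic case --- and then noting that both sides vanish at $a$ and $b$; since $\ker\mathcal{L}_0=\operatorname{span}\{e^{-\alpha(x-a)},e^{-\alpha(b-x)}\}$ and the only element of this space vanishing at both endpoints is $0$, the two sides agree. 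The ``general form'' $\partial_x^{2p}\mathcal{D}_0[v,\alpha]=\sum_{m=1}^p\alpha^{2(p-m)}\partial_x^{2m}v+\alpha^{2p}\mathcal{D}_0[v,\alpha]$ obtained in the proof of Theorem~\ref{thm1}, combined with the homogeneous boundary condition on $v$ and $\mathcal{D}_0[v,\alpha](a)=\mathcal{D}_0[v,\alpha](b)=0$, then shows that $\mathcal{D}_0[v,\alpha]$ again has all its even-order derivatives vanishing at $a$ and $b$, so the identity reapplies to $\mathcal{D}_0[v,\alpha],\mathcal{D}_0^2[v,\alpha],\dots$; the remaining telescoping into $\sum_{p=1}^k\mathcal{D}_0^p[v,\alpha]=-\frac{1}{\alpha^2}\partial_{xx}v+(1/\alpha)^{2k+2}Q_k$ and the bound on $Q_k$ are identical to Theorem~\ref{thm1}.

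For part~2 I would carry the pair $(\mathcal{D}_L[v_1,\alpha],\mathcal{D}_R[v_2,\alpha])$ through the argument together. Running the integration-by-parts computation behind Lemma~\ref{lem1}(2) while discarding the boundary contributions $\partial_x^p v_1(a)$, $\partial_x^p v_2(b)$ (which vanish for $p\ge1$ by hypothesis) yields $\mathcal{D}_L[v_1,\alpha](x)=-\sum_{p=1}^k(-1/\alpha)^p\partial_x^p v_1(x)+R_L(x)+c_L\,e^{-\alpha(x-a)}$ and $\mathcal{D}_R[v_2,\alpha](x)=-\sum_{p=1}^k(1/\alpha)^p\partial_x^p v_2(x)+R_R(x)+c_R\,e^{-\alpha(b-x)}$, where $R_L$ and $R_R$ are the $\mathcal{L}_L^{-1}$- and $\mathcal{L}_R^{-1}$-remainders of order $(1/\alpha)^{k+1}$ and $c_L,c_R$ are the single free constants of these first-order inversions. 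Imposing the coupled condition \eqref{eq:fxhomo} at $x=a$ and $x=b$ gives a $2\times2$ linear system for $(c_L,c_R)$ whose solution is exactly the $A_L,B_R$ pair displayed after \eqref{eq:fxhomo}; a short estimate, using the homogeneous boundary condition together with $\|I^L[w,\alpha]\|_\infty,\|I^R[w,\alpha]\|_\infty\le\|w\|_\infty$, shows $|c_L|,|c_R|\le C(1/\alpha)^{k+1}(\|\partial_x^{k+1}v_1\|_\infty+\|\partial_x^{k+1}v_2\|_\infty)$, indeed exponentially small when $v_1,v_2$ are constant near the boundary. Forming $\alpha\mathcal{D}_L[v_1,\alpha]-\alpha\mathcal{D}_R[v_2,\alpha]$, the leading terms add up to $\partial_x v_1+\partial_x v_2=\partial_x v$ up to $O(1/\alpha)$ and the $e^{\pm\alpha(\cdot)}$ pieces become $O((1/\alpha)^k)$ after multiplication by $\alpha$; iterating, verifying at each level that the new free constant is again fixed by a \eqref{eq:fxhomo}-type coupling and stays of order $(1/\alpha)^{k+1}$, telescoping as in Theorem~\ref{thm1}, and invoking the uniform bounds for $\mathcal{L}_L^{-1}$ and $\mathcal{L}_R^{-1}$ give the claimed estimate.

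The main obstacle is exactly this bookkeeping in part~2: because \eqref{eq:fxhomo} couples $\mathcal{D}_L$ and $\mathcal{D}_R$, neither $\mathcal{D}_L[v_1,\alpha]$ nor $\mathcal{D}_R[v_2,\alpha]$ vanishes at the endpoints on its own --- only a specific combination of the two does --- so, in contrast to the $\partial_{xx}$ case, one cannot simply reapply the one-step identity operator by operator. One has to propagate the pair through the iteration and check that the kernel corrections $c_L e^{-\alpha(x-a)}$, $c_R e^{-\alpha(b-x)}$ produced at each of the $k$ levels remain of order $(1/\alpha)^{k+1}$, so that, after multiplication by $\alpha$ and summation over levels, their total contribution is absorbed into the $k$-dependent constant $C$. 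This is routine but lengthy, which is presumably why the authors state the theorem and omit the proof.
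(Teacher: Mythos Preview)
Your proposal is correct and matches the paper's approach exactly: the paper omits the proof entirely, stating only that it is ``quite similar to that of Theorem~\ref{thm1}.'' You have in fact gone further than the authors by spelling out where the homogeneous boundary treatment requires genuine modification --- the kernel argument replacing periodicity in part~1 and the coupled $(\mathcal{D}_L,\mathcal{D}_R)$ bookkeeping in part~2 --- and correctly flagged the latter as the only place demanding real care.
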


\begin{rem}
For the homogeneous boundary condition case, we can still use \eqref{eq:H} to approximate $-f(u)_x+g(u)_{xx}$ with accuracy $\mathcal{O}(\Delta t^{k})$.
\end{rem}

\section{Space discretization}

In this section, we present the details about the spatial discretization   of $\mathcal{H}[u]$ in \eqref{eq:H}. The proposed algorithm is based on our early work on a high order WENO MOL$^T$  schemes for transport problems. 
Suppose the domain $[a,b]$ is divided by $N+1$ uniformly distributed grid points
$$a=x_0<x_1<\cdots<x_{N-1}<x_N=b,$$
with the mesh size $\Delta x=\frac{b-a}{N}$. Denote $u^n_i$ as the numerical solution at the spatial location $x_{i}$ and time level $t^{n}$. 
On each grid point $x_{i}$, we further denote $I^{*}[v,\alpha](x_{i})$ as $I^{*}_{i}$, where $*$ can be $0$, $L$ and $R$. Note that the convolution integrals $I^{L}_{i}$ and $I^{R}_{i}$ satisfy a recursive relation 
\begin{subequations}
\label{eq:recursive}
\begin{align}
& I^L_i = I^L_{i-1}e^{-\alpha_{L}\Delta x} + J^L_i,\quad i=1,\ldots,N, \quad I^L_0 = 0, \\
& I^R_i = I^R_{i+1}e^{-\alpha_{R}\Delta x} + J^R_i,\quad i=0,\ldots,N-1, \quad I^R_N = 0,
\end{align}
\end{subequations}
respectively, where
\begin{align}
\label{eq:JLR}
 J^L_{i} =  \alpha_{L} \int_{x_{i-1}}^{x_{i}} v(y)e^{-\alpha_{L} (x_{i}-y)}dy,\ \ \ \
 J^R_{i} =  \alpha_{R} \int_{x_{i}}^{x_{i+1}} v(y)e^{-\alpha_{R} (y-x_{i})}dy.
\end{align}
Therefore, once we have computed $J^{L}_{i}$ and $J^{R}_{i}$ for all $i$, we then can obtain $I^{L}_{i}$ and $I^{R}_{i}$ via the recursive relation.
In addition, the convolution integral $I^{0}[v,\alpha_{0}](x)$ can be split into $I^{L}[v,\alpha_{0}](x)$ and $I^{R}[v,\alpha_{0}](x)$,  
$$ I^{0}[v,\alpha_{0}](x) =\frac{1}{2}( I^{L}[v,\alpha_{0}](x) + I^{R}[v,\alpha_{0}](x).$$
Thus, $I^0_i$ can be evaluated in the same way as $I^L_i$ and $I^R_i $, see \cite{causley2013method}. 

A distinct feature of the equation \eqref{eq:ad} is that
discontinuous solution structures and sharp fronts may be developed.  The WENO methodology has long been a standard tool to solve hyperbolic problems with discontinuous solutions, which can achieve sharp and non-oscillatory shock transitions and high order accuracy in smooth regions \cite{jiang1996efficient,shu1998essentially, shu2009high}. 
Recently, in \cite{liu2011high}, the authors developed a finite difference WENO scheme to solve degenerate parabolic equations. Such an approach directly approximates the second derivative term using a conservative flux difference formulation. 
In \cite{christlieb2016weno}, a WENO-based high order quadrature was developed to evaluate $J^{L}_{i}$ and $J^{R}_{i}$. Some related works in the literature about the WENO-based quadrature include \cite{chou2006high, chou2007high, liu2009positivity}. In this work,  we still employ the WENO-based quadrature from \cite{christlieb2016weno} with the aim to avoid spurious oscillations when solving discontinuous problems. For the reader's convenience, we will briefly review the main procedure. All the formulas and the associated coefficients are provided as well.   Note that, as suggested in our numerical results, the WENO methodology itself may not be adequate to suppress solution overshoots. To enhance robustness of the method, we propose to couple a nonlinear filter. Such a filter is constructed via the information from the WENO procedure and hence will not increase the cost significantly. Moreover, we apply the WENO quadrature only for approximating operators with $p=1$ in \eqref{eq:H} and \eqref{eq:H2}, and use cheap high order linear quadrature for those with $p>1$. The numerical evidence indicates that by doing so we can reduce the cost and the scheme is still high order accurate and free of oscillations.

\subsection{WENO-based quadrature}

Below, the fifth order WENO-based quadrature for approximating $J^{L}_{i}=J^{L}[v,\alpha](x_{i})$ is provided as an example. The corresponding stencil used is shown in Figure \ref{Fig0}, and all coefficients are given in the Appendix. The process to obtain $J^{R}_{i}$ is mirror symmetric to that of $J^{L}_{i}$ with respect to point $x_{i}$.  
\begin{figure}
	\centering
	\includegraphics[width=0.5\textwidth]{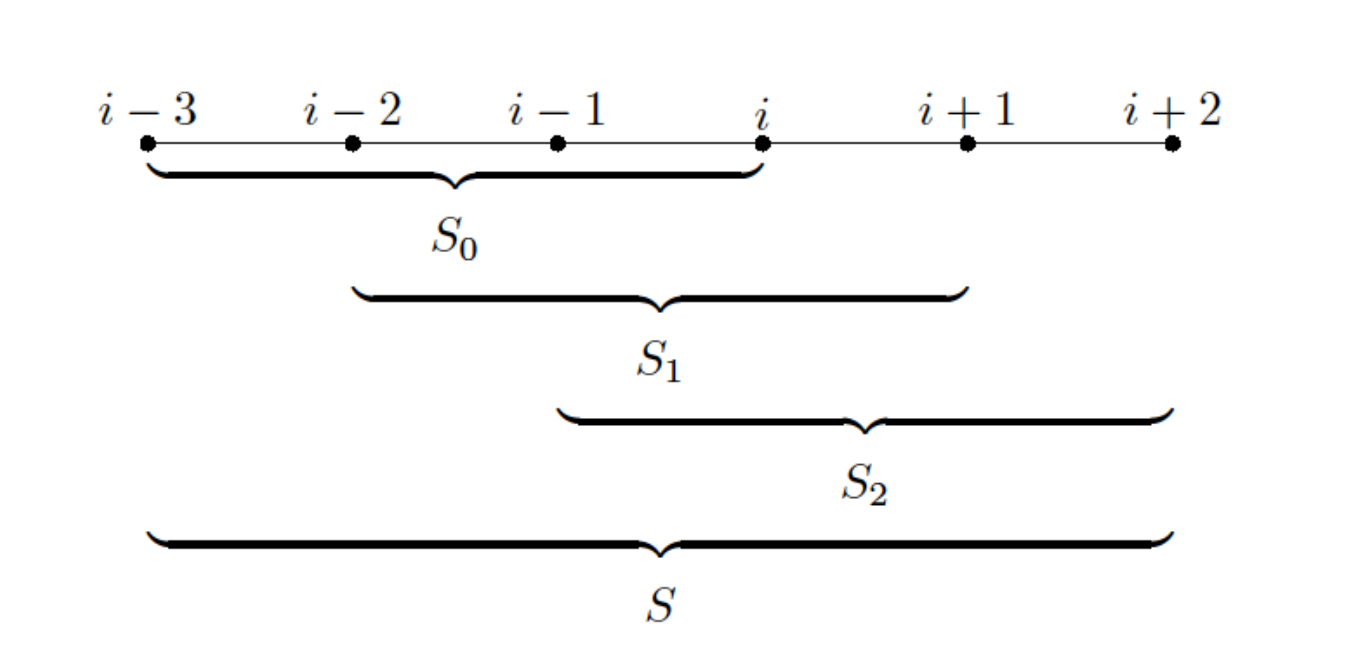}
	\caption{\em The structure of the stencils in WENO integration.   }
	\label{Fig0}
\end{figure}

\begin{enumerate}
	\item
	As with the standard WENO methodology, we first choose the three small stencils as
	$S_{r}(i)=\{x_{i-3+r},x_{i-2+r},x_{i-1+r},x_{i+r}\}$, $r=0, \,1,\, 2$. On each small stencil, there is a unique polynomial $p_{r}(x)$ of degree at most three which interpolates $v(x)$ at the nodes in $S_{r}(i)$. Then we are able to compute three candidates for $J^L_i$ denoted by $J^L_{i,r}$, $r=0,\,1,\,2$ 
	\begin{equation*}
	J^{L}_{i,r} =\alpha \int_{x_{i-1}}^{x_{i}}e^{-\alpha(x_{i}-y)}p_{r}(y)dx
	=\sum_{j=0}^{3}c^{(r)}_{-3+r+j}v_{i-3+r+j},
	\end{equation*}
	where the coefficients $c^{(r)}_{-3+r+j}$ depend on $\alpha$ and the cell size $\Delta x$, but not on $v$.
	
	\item
	On the entire big stencil $S(i)=\{x_{i-3},\ldots,x_{i+2}\}$, there is a unique polynomial $p(x)$ of degree at most five interpolating $v(x)$ at the nodes in $S(i)$. Then we have
	\begin{equation}
	J^{L}_{i,S} =\alpha \int_{x_{i-1}}^{x_{i}}e^{-\alpha(x_{i}-y)}p(y)dx
	=\sum_{j=0}^{5}c_{-3+j}v_{i-3+j}
	=\sum_{r=0}^{3}d_{r}J^{L}_{i,r}
	\end{equation}
	that approximates $J^{L}_{i}$ with the linear weights $d_{r}$.
	
	\item
	Replace the linear weights $d_{r}$ with the nonlinear weights $\omega_{r}$ that are defined as
	\begin{equation}
	\omega_{r}=\frac{\tilde{\omega}_{r}}{\sum_{s=0}^{2}\tilde{\omega}_{s}}, \ \ 
	\text{with} \ \ 
	\tilde{\omega}_{r}=\frac{d_{r}}{(\epsilon+SI_{r})^2}, \ \ r=0,\,1,\,2.
	\end{equation}
	Here, $\epsilon>0$ is a small number to avoid a zero denominator, and we take $\epsilon=10^{-6}$ in our numerical tests. The smoothness indicators $SI_{r}$ that measure the relative smoothness of the function $v(x)$ in the stencil $S_{r}(i)$ are defined as
	\begin{align*}
	SI_{0}=& \frac{781}{720}(-v_{i-3}+3v_{i-2}-3v_{i-1}+v_{i})^2 
	+ \frac{13}{48}(v_{i-3}-5v_{i-2}+7v_{i-1}-3v_{i})^2 
	+ (v_{i-1}-v_{i})^2,\\
	SI_{1}=& \frac{781}{720}(-v_{i-2}+3v_{i-1}-3v_{i}+v_{i+1})^2 
	+ \frac{13}{48}(v_{i-2}-v_{i-1}-v_{i}+v_{i+1})^2 
	+ (v_{i-1}-v_{i})^2,\\
	SI_{2}=& \frac{781}{720}(-v_{i-1}+3v_{i}-3v_{i+1}+v_{i+2})^2 
	+ \frac{13}{48}(-3v_{i-1}+7v_{i}-5v_{i+1}-v_{i+2})^2
	+ (v_{i-1}-v_{i})^2.
	\end{align*}
	
	\item
	Lastly, $J^{L}_{i}$ is approximated by $J^{L}_{i,W}$, where
	$ \displaystyle
	J^{L}_{i,W}=\sum_{r=0}^{2}\omega_{r}J^{L}_{i,r}$.
	
\end{enumerate}

\subsection{Nonlinear filter}

As mentioned above, the $k^{th}$ order accuracy in time is attained via the $k^{th}$ partial sum. However, it is observed from the numerical results that,  when $k\geq2$, 
spurious oscillations will appear for some non-smooth problems, even though the WENO quadrature is applied to compute the partial sum $\mathcal{H}$. 
%
Therefore, to further enhance robustness and to avoid spurious oscillations, we introduce a nonlinear ``filter'' denoted by $\sigma$ when approximating $\partial_{x}$. Note that such a filter is only needed for the convection part. Below, we only consider periodic boundary conditions to illustrate the idea and the proposed methodology can be extended straightforwardly to handle the special homogeneous boundary condition. 

The key idea of the proposed nonlinear filter is that, unlike \eqref{eq:partial}, we use the following modified formulation to approximate $\partial_x$:
\label{eq:highorder}
\begin{align}
 \partial_{x}\approx \frac{\beta}{c\Delta t}\mathcal{D}_{L} + \frac{\beta}{c\Delta t} \sum_{p=2}^{k}\sigma_{L,i}^{p-1}\mathcal{D}_{L}^p, \quad\text{and}\quad
 \partial_{x}\approx -\frac{\beta}{c\Delta t}\mathcal{D}_{R} - \frac{\beta}{c\Delta t} \sum_{p=2}^{k}\sigma_{R,i}^{p-1}\mathcal{D}_{R}^p,
\end{align}
where the filters $\sigma_{L,i}$ and $\sigma_{R,i}$ are incorporated. In this work,  
  the filters are designed to fulfill several requirements: (a) 
 $\sigma_{L,i}$ and $\sigma_{R,i}$ are $1+\mathcal{O}(\Delta x^k)$ when the solution is continuous, thus maintaining the original high order accuracy;
  (b) they are close to 0 when the grid point $x_i$ is in the vicinity of a discontinuity, thus decreasing the influence of the high order terms which may lead to oscillations;
  (c) the associated computational overhead is relatively low. 
%
%
  To achieve this goal, the design of the filter is
  based on the smoothness indicators from the WENO methodology and the underlying idea is similar to that proposed in \cite{borges2008improved}.

Below, we provide the details of construction of the filter.
Assume that we have obtained the approximation to the integral $J^{L}_{i}$ based on the WENO-based quadrature using the six-point stencil $S(i)=\{x_{i-3},\ldots,x_{i+2}\}$ and the associated  three small stencils $S_{0}(i)$, $S_{1}(i)$ and $S_{2}(i)$ given in Section 4.1.
Following the idea in \cite{borges2008improved}, we introduce a new parameter $\tau_i$, which is simply defined as the absolute difference between the smoothness indicators $SI_{0}$ and $SI_{2}$, namely,
$$\tau_i=|SI_{0}-SI_{2}|.$$
Note that $\tau_i$ can be obtained with little cost since $SI_{0}$ and $SI_{2}$ are already available.
If $v(x)$ is smooth on the entire stencil $S(i)$, applying the Taylor expansion to $SI_{0}$ and $SI_{2}$ gives
\begin{align*}
SI_{0}
=& (v'_{i-1/2})^2 \Delta x^2 + \frac{1}{12}\big( 13(v''_{i-1/2})^2+v'_{i+1/2}v^{(3)}_{i-1/2} \big) \Delta x^4 + \frac{1}{2880}\big( 3129(v^{(3)}_{i-1/2})^2\\
& -1820v''_{i-1/2}v^{(4)}_{i-1/2} + 3v'_{i-1/2}v^{(5)}_{i-1/2} \big)\Delta x^6 +\mathcal{O}(\Delta x^7), \\
SI_{2}
=& (v'_{i-1/2})^2 \Delta x^2 + \frac{1}{12}\big( 13(v''_{i-1/2})^2+v'_{i+1/2}v^{(3)}_{i-1/2} \big) \Delta x^4 + \frac{1}{2880}\big( 3129(v^{(3)}_{i-1/2})^2\\
& -1820v''_{i-1/2}v^{(4)}_{i-1/2} + 3v'_{i-1/2}v^{(5)}_{i-1/2} \big)\Delta x^6 +\mathcal{O}(\Delta x^7) .
\end{align*}
Thus, we deduce that
$\tau_i=\mathcal{O}(\Delta x^7).$
We further define
$$\xi_{i}= \frac{1+\tau_i^2/(SI_{max}+\epsilon)^2}{1+\tau_i^2/(SI_{min}+\epsilon)^2},$$
where
$$SI_{max}=\max(SI_{0},SI_{2}), 
\ \ \ \text{and} \ \ \ 
SI_{min}=\min(SI_{0},SI_{2}).$$
Note that
$SI_{max}$ and $SI_{min}$ are both $\mathcal{O}(\Delta x^2)$ in a monotone region, and $\mathcal{O}(\Delta x^4)$ near a critical point. A simple Taylor expansion applying to $\xi_{i}$ yields
$$\xi_{i} =1+\mathcal{O}(\Delta x^6).$$
Here, we take $\epsilon=10^{-6}$ to avoid a zero denominator. On the other hand, if the solution $v(x)$ contains a discontinuity within the interval $[x_{i-3},x_{i-1}]$ or $[x_{i},x_{i+2}]$, we can show that $\xi_{i}=\mathcal{O}(\Delta x^4)$ via a Taylor expansion. Meanwhile, if $v(x)$ is discontinuous within $[x_{i-1},x_{i}]$, then $\xi_i$ may be $\mathcal{O}(1)$, while we have $\xi_{i+1}=\mathcal{O}(\Delta x^4)$ which is defined at the neighboring grid point $x_{i+1}$. The nonlinear filter is defined as 
\begin{align*}
& \sigma_{L,i}=\min(\xi_{i},\xi_{i+1}).
\end{align*}	
$\sigma_{R,i}$ is mirror symmetric to $\sigma_{L,i}$ with respect to $x_i$, and it reads
\begin{align*}
\sigma_{R,i}=\min(\xi_{i-1},\xi_{i}),
\end{align*}
where $\xi_{i}$ is obtained through the smoothness indicators that are used for computing $J^{R}_{i,W}$. 
\section{Time discretization and stability}
In this section, we introduce the time discretization methods for evolving \eqref{eq:ad} based on the partial sum approximation \eqref{eq:H}, and then analyze the linear stability property. Denote $u^{n}$ as the semi-discrete solution at time $t^{n}$. In this work, we propose to use the classic explicit SSP RK methods \cite{gottlieb2001strong} to advance $u^{n}$ to $u^{n+1}$. For example, the first order scheme is the forward Euler scheme
\begin{align}
\label{eq:rk1}
u^{n+1}=u^{n}+\Delta t \mathcal{H}[u^{n}].
\end{align} 
The second order SSP RK scheme is given as
\begin{align}
\label{eq:rk2}
& u^{(1)}=u^{n}+\Delta t \mathcal{H}[u^{n}],\nonumber\\
& u^{n+1}=\frac{1}{2}u^{n}+\frac{1}{2}\left( u^{(1)} +\Delta t \mathcal{H}[u^{(1)}] \right).
\end{align}
And the third order SSP RK scheme is given as
\begin{align}
\label{eq:rk3}
& u^{(1)}=u^{n}+\Delta t \mathcal{H}[u^{n}],\nonumber\\
& u^{(2)}=\frac{3}{4}u^{n}+\frac{1}{4} \left( u^{(1)}+\Delta t \mathcal{H}[u^{(1)}] \right), \nonumber\\
& u^{n+1}=\frac{1}{3}u^{n}+\frac{2}{3} \left( u^{(2)}+\Delta t \mathcal{H}[u^{(2)}] \right).
\end{align}

Note that, to achieve $k^{th}$ order accuracy in time, we should employ the $k^{th}$ order SSP RK method as well as the  $k^{th}$ partial sum $\mathcal{H}[u]$. A remarkable advantage of the proposed scheme is that, even though the explicit SSP RK method is used for time integration, unlike the standard MOL approach, the scheme can be A-stable and hence allowing for large time step evolution if $\beta$ in \eqref{eq:H} is appropriately chosen. In particular, we establish linear stability of the scheme in the following theorem.

\begin{thm}\label{thm4}
	(a) For the linear advection equation $u_{t}+cu_{x}=0$ with periodic boundary conditions, there exists constant $\beta_{1,k,max}>0$ for $k=1,\,2$, such that the scheme is A-stable provided  $0<\beta\leq\beta_{1,k,\max}$;
	
	(b) For the linear diffusion equation $u_{t}=bu_{xx}$ with $b>0$ and periodic boundary conditions, there exists constant $\beta_{2,k,max}>0$ for $k=1,\,2,\,3$, such that the scheme is A-stable provided  $0<\beta\leq\beta_{2,k,\max}$. 
	
	The constants $\beta_{1,k,max}$ and $\beta_{2,k,\max}$ are summarized in Table \ref{tab0}.
\end{thm}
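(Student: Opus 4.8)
The plan is to reduce the A-stability question for each model problem to a finite-dimensional Fourier-mode analysis, since both the advection and diffusion equations are linear with constant coefficients and periodic boundary conditions. First I would insert a single Fourier mode $u = e^{i\omega x}$ into the kernel-based operators. The key observation is that on the periodic domain the convolution operators $\mathcal{L}_0^{-1}$, $\mathcal{L}_L^{-1}$, $\mathcal{L}_R^{-1}$ are diagonalized by the Fourier basis: from the defining ODEs $\mathcal{L}_0[w,\alpha] = w - \alpha^{-2}w_{xx}$, etc., the symbol of $\mathcal{L}_0$ acting on $e^{i\omega x}$ is $1 + \omega^2/\alpha^2$, so $\mathcal{D}_0 = \mathcal{I} - \mathcal{L}_0^{-1}$ has symbol $1 - (1+\omega^2/\alpha^2)^{-1} = \zeta_0/(1+\zeta_0)$ with $\zeta_0 = \omega^2/\alpha^2 \ge 0$; likewise $\mathcal{D}_L$, $\mathcal{D}_R$ have symbols $\mp i(\omega/\alpha)/(1 \mp i\omega/\alpha)$. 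Hence the partial-sum operator $\mathcal{H}$ becomes multiplication by a scalar symbol $\lambda(\omega)$, and with the substitutions $\alpha_0 = \sqrt{\beta/(b\Delta t)}$ and $\alpha_L = \alpha_R = \beta/(c\Delta t)$ from the Remark, $\Delta t\,\lambda(\omega)$ depends only on $\beta$, on $k$, and on the nondimensional quantities $\zeta_0 = \omega^2 b\Delta t/\beta$ (diffusion) or $\eta = \omega c\Delta t/\beta$ (advection), which range over $[0,\infty)$.

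Next I would identify the amplification factor of the full scheme. For the $k$-th order explicit SSP RK method applied to $u' = \lambda u$, the amplification factor is the stability polynomial $R_k(z) = \sum_{j=0}^{k} z^j/j!$ evaluated at $z = \Delta t\,\lambda$; the SSP RK1, RK2, RK3 schemes in \eqref{eq:rk1}--\eqref{eq:rk3} all share these standard polynomials. So A-stability of the scheme — meaning stability for every $\Delta t>0$ — is equivalent to the statement that $|R_k(z(\zeta))| \le 1$ for all admissible values of the nondimensional variable $\zeta \in [0,\infty)$, where $z(\zeta) = \Delta t\,\lambda$ is the scalar symbol computed above. Because $z$ no longer contains $\Delta t$ except through $\zeta$, and $\zeta$ already sweeps all of $[0,\infty)$ as $\omega$ and $\Delta t$ vary, the problem genuinely collapses to a one-parameter inequality in $\zeta$, parametrized by $\beta$. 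The task is then to show that for small enough $\beta$ the curve $\{z(\zeta) : \zeta \ge 0\}$ in the complex plane stays inside the stability region $\{|R_k| \le 1\}$, and to compute the threshold $\beta_{*,k,\max}$ as the largest $\beta$ for which this holds; this is what gets tabulated in Table \ref{tab0}.

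I would then carry out the estimate separately in the two cases. In the diffusion case the symbol is real and negative: $z(\zeta_0) = -\frac{\zeta_0}{\beta}\sum_{p=1}^k \big(\tfrac{\zeta_0}{1+\zeta_0}\big)^p$ (up to the bookkeeping constant $b\Delta t\,\alpha_0^2 = \beta$), so one needs $-2 \le z \le 0$ — i.e. the worst case is $\zeta_0 \to \infty$, where the geometric sum tends to $k$ and $z \to -\infty$ unless one notices that $\zeta_0/\beta = \omega^2\Delta t/b$ is itself bounded only if $\Delta t$ is; here the correct scaling must be re-examined, and in fact $z(\zeta_0) = -\beta^{-1}\sum_{p=1}^k \zeta_0^{p+1}/(1+\zeta_0)^p$, which for fixed $\beta$ is bounded on $\zeta_0\in[0,\infty)$ only through a more careful accounting — this delicate bookkeeping of powers of $\beta$ versus $\zeta_0$ is the main obstacle. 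Once the symbol is correctly shown to be a bounded function of $\zeta_0$ with supremum proportional to $\beta$ (or to a fixed function of $\beta$), the condition $|R_k(z)| \le 1$ on the negative real axis for RK1--RK3 reduces to requiring $z \ge -c_k$ for an explicit constant $c_k$ (namely $c_1 = 2$, $c_2 = 2$, $c_3 \approx 2.5127$), giving the stated $\beta_{2,k,\max}$. In the advection case $z(\eta)$ is complex and traces a bounded closed curve through the origin as $\eta$ runs over $\mathbb{R}$; here I would use that the SSP RK2 and RK3 stability regions contain a neighborhood of a segment of the imaginary axis near the origin (RK2 contains no imaginary-axis segment, so the relevant bound comes from the real part being slightly negative — another point needing care), and show that scaling by small $\beta$ shrinks the curve into the stability region, extracting $\beta_{1,k,\max}$ as the scaling at which the curve first touches the boundary $|R_k| = 1$. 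The main obstacle throughout is the bookkeeping that disentangles the explicit $\beta$-dependence from the $\zeta$-dependence in the symbol so that "small $\beta$" genuinely buys containment in the stability region uniformly over all Fourier modes and all time steps; the rest is a one-variable verification of $|R_k| \le 1$ along an explicit curve, done numerically to produce Table \ref{tab0}.
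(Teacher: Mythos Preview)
Your overall strategy---Fourier/Von Neumann analysis, then checking that the image curve $z(\zeta)$ lies in the RK stability region $\{|R_k|\le 1\}$---is exactly the paper's approach. The paper carries it out explicitly only for $k=1$ and states that $k=2,3$ are similar.

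The genuine gap in your proposal is the scaling computation, which you flag as ``the main obstacle'' but do not resolve, and in fact get wrong twice. With $\alpha_0=\sqrt{\beta/(b\Delta t)}$ and $g(u)=bu$, one has $\Delta t\cdot\alpha_0^2\cdot b=\beta$, so the forward-Euler symbol in the diffusion case is simply
\[
z(\zeta_0)\;=\;-\beta\sum_{p=1}^{k}\widehat{\mathcal D}_0^{\,p},
\qquad
\widehat{\mathcal D}_0=\frac{\zeta_0}{1+\zeta_0}\in[0,1),
\]
not $-\tfrac{\zeta_0}{\beta}\sum(\cdot)$ or $-\beta^{-1}\sum \zeta_0^{p+1}/(1+\zeta_0)^p$ as you wrote. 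Thus $z$ lies on the real interval $(-\beta k,0]$ for every $\zeta_0\ge 0$, and the ``delicate bookkeeping'' you worried about never arises: boundedness is immediate, and the threshold $\beta_{2,k,\max}$ comes from $\beta k\le c_k$ combined with the finer shape of the sum (for $k=1$ this gives $\beta\le 2$ directly). The advection case is parallel: with $\alpha_L=\beta/(c\Delta t)$ one gets $z=-\beta\sum_{p=1}^k\widehat{\mathcal D}_L^{\,p}$ where $\widehat{\mathcal D}_L=i\eta/(1+i\eta)$ has modulus $<1$, so again $|z|<\beta k$ uniformly. For $k=1$ a two-line computation gives $|1+z|^2=1+\beta(\beta-2)\,\eta^2/(1+\eta^2)$, hence $\beta_{1,1,\max}=2$. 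Fix the prefactor and your outline goes through without further difficulty.
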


\begin{proof} Here, we only show the proof for $k=1$ for brevity. Given the ansatz $u^{n}=\hat{u}^{n}e^{i\kappa x}$, we can obtain the amplification factor $\lambda$ via a Von Neumann analysis. The scheme is unconditionally stable if  $|\lambda|\leq 1$ for any $\kappa$ and time step $\Delta t$. 
	\begin{enumerate}
		\item We present the proof for the case $c>0$. For $c<0$, the proof can be established in the same way. Upon the definitions of $\mathcal{D}_{L}$ and $\mathcal{L}_{L}$, by taking the Fourier transform in space, we obtain that $\widehat{\mathcal{L}}_{L}=1+(i\kappa)/\alpha_{L}$, and then
		$$\widehat{\mathcal{D}}_{L} =1-1/\widehat{\mathcal{L}}_{L} =\frac{i\kappa/\alpha_{L}}{1+i\kappa/\alpha_{L}}.$$
	    For the forward Euler scheme $u^{n+1}=u^{n}-\Delta t\, \alpha_{L}\, \mathcal{D}_{L}[cu^{n},\alpha_{L}]$ with the parameter $\alpha_{L}=\beta/(c\Delta t)$, we could compute the amplification factor $\lambda$
		$$\lambda=1-\beta\frac{i\kappa c\Delta t/\beta}{1+i\kappa c\Delta t/\beta}.$$
		Then, we have $|\lambda|\leq1$ when $\beta\leq2$, which implies the scheme is A-stable. Hence, for the first order scheme $k=1$, we can choose $\beta_{1,1,max}=2$.

		\item Similarly, for the forward Euler scheme $u^{n+1}=u^{n}-\Delta t\, \alpha_{0}^2 \, \mathcal{D}_{0}[bu^{n},\alpha_{0}]$ and $\alpha_{0}=\sqrt{\beta/b \Delta t}$, the amplification factor $\lambda$ is 
		$$\lambda = 1-\beta\widehat{\mathcal{D}}_{0} \ \ 
		\text{with} \ \ 
		\widehat{\mathcal{D}}_{0} =1-1/\widehat{\mathcal{L}}_{0} =\frac{(\kappa/\alpha_{0})^2}{1+(\kappa/\alpha_{0})^2}\in[0,1].$$  
		Then, we still have $\beta\leq2$ to ensure $|\lambda|\leq1$. Thus, we let $\beta_{2,1,\max}=2$.
	\end{enumerate}	
\end{proof}

\begin{rem}
	\label{rem1}
	Note that, for the linear advection equation $u_{t}+cu_{x}=0$ with periodic boundary conditions, the third order scheme \eqref{eq:H} can only be A($\alpha$)-stable. But, fortunately, we find that if the scheme is modified as
	\begin{align}
	\label{eq:H2}
	\mathcal{H}[u](x)=& -\frac{\beta}{c\Delta t} \sum_{p=1}^{3}\mathcal{D}_{L}^{p}[f^{+}(u),\frac{\beta}{c\Delta t}](x) 
	+ \frac{\beta}{c\Delta t} \sum_{p=1}^{3}\mathcal{D}_{R}^{p}[f^{-}(u),\frac{\beta}{c\Delta t}](x) \nonumber\\
	& -\frac{\beta}{b\Delta t}\sum_{p=1}^{3}\mathcal{D}_{0}^{p} [g(u),\sqrt{\frac{\beta}{b\Delta t}}](x)\nonumber\\
	& + \frac{\beta}{c\Delta t} \mathcal{D}_{0}[ \, \mathcal{D}_{L}^{2}[f^{+}(u),\frac{\beta}{c\Delta t}]-\mathcal{D}_{L}^{2}[f^{-}(u),\frac{\beta}{c\Delta t}]\, ,\frac{\beta}{c\Delta t}](x),	
	\end{align}
	still with periodic boundary treatment for the last term,
	then the scheme coupled with the third order SSP RK integrator is also A-stable provided $0<\beta\leq\beta_{1,3,\max}$. 
   In light of Lemma \ref{lem1}, the extra term in \eqref{eq:H2} is in fact an approximation to $f_{xxxx}$. It will enhance the stability of the scheme and make 	$\mathcal{H}[u](x)$  fourth order accurate for the case $b\ll c$, i.e., convection dominates. For the homogeneous boundary condition case, we can similarly add the extra term with the treatment \eqref{eq:gxxhomo} and make the scheme A-stable. The parameter $\beta_{1,3,\max}$ is given in Table \ref{tab0}.
\end{rem}

	Below, we provide a Fourier analysis for the fully discrete scheme with the sixth order linear quadrature rule
	\begin{align}
	\label{eq:linear}
	J^{L}_{j}=\sum_{r=-3}^{2}c_{r}v_{j+r}, \quad \text{and} \quad J^{R}_{j}=\sum_{r=-3}^{2}c_{r}v_{j-r}.
	\end{align}
	Without loss of generality, we take $c=1$ and $b=1$ for simplicity.
	Under the assumption that $u^{n}_{j}=\hat{u}^{n}e^{i\kappa x_{j}}$, we can obtain
	\begin{align}
	\hat{\mD}_{L}=1- \frac{\sum_{r=-3}^{2} c_{r}e^{i r\kappa\Delta x}}{1-e^{-\alpha\Delta x-i \kappa\Delta x}}
	\end{align}
	and
	\begin{align}
	\hat{\mD}_{0}=1-\frac{1}{2}\left( \frac{\sum_{r=-3}^{2} c_{r}e^{i r\kappa\Delta x}}{1-e^{-\alpha\Delta x-i \kappa\Delta x}} + \frac{\sum_{r=-3}^{2} c_{r}e^{-i r\kappa\Delta x}}{1-e^{-\alpha\Delta x+i \kappa\Delta x}}\right)
	\end{align}
	Moreover, it is straightforward to check that the amplification factor $\lambda$ for the linear advection equation $u_{t}+u_{x}=0$ depends on $\beta$, $\kappa\Delta x$ and $\Delta t/\Delta x$, while for the linear diffusion equation $u_{t}=u_{xx}$, $\lambda$ depends on $\beta$, $\kappa\Delta x$ and $\Delta t/\Delta x^2$. 
	Even though it is very tedious and difficult to derive analytically the condition of $\lambda\le1$, 
	as a common practice, we can still numerically verify that, if $0<\beta<\beta_{\cdot,k,\max}$, for $k=1,\, 2,\, 3$, then $|\lambda|\le1$ for any $\kappa\Delta x\in[0,2\pi ]$, $\Delta t$ and $\Delta x$. In Figure \ref{Fig1add}, we plot the contours of $|\lambda|$ with $\beta=\beta_{\cdot,k,\max}$ to justify this property. In other words, the scheme is unconditionally stable if $\beta$ is appropriately chosen according to Table \ref{tab0}.

\begin{figure}
	\centering
	\subfigure[$k=1$. $\beta=2$.]{
		\includegraphics[width=0.3\textwidth]{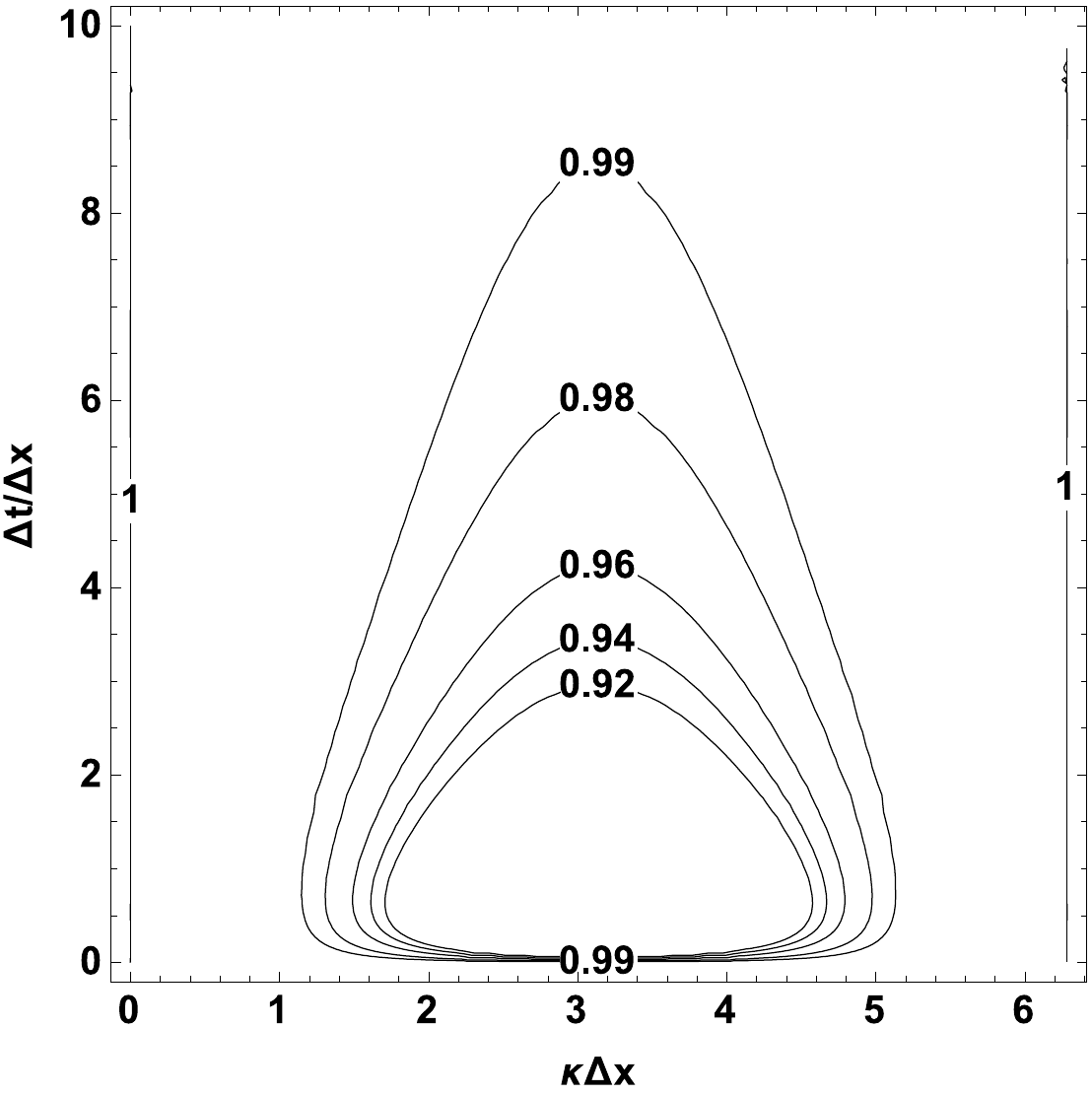}\label{Fig1add.1}}
	\subfigure[$k=2$. $\beta=1$.]{
		\includegraphics[width=0.3\textwidth]{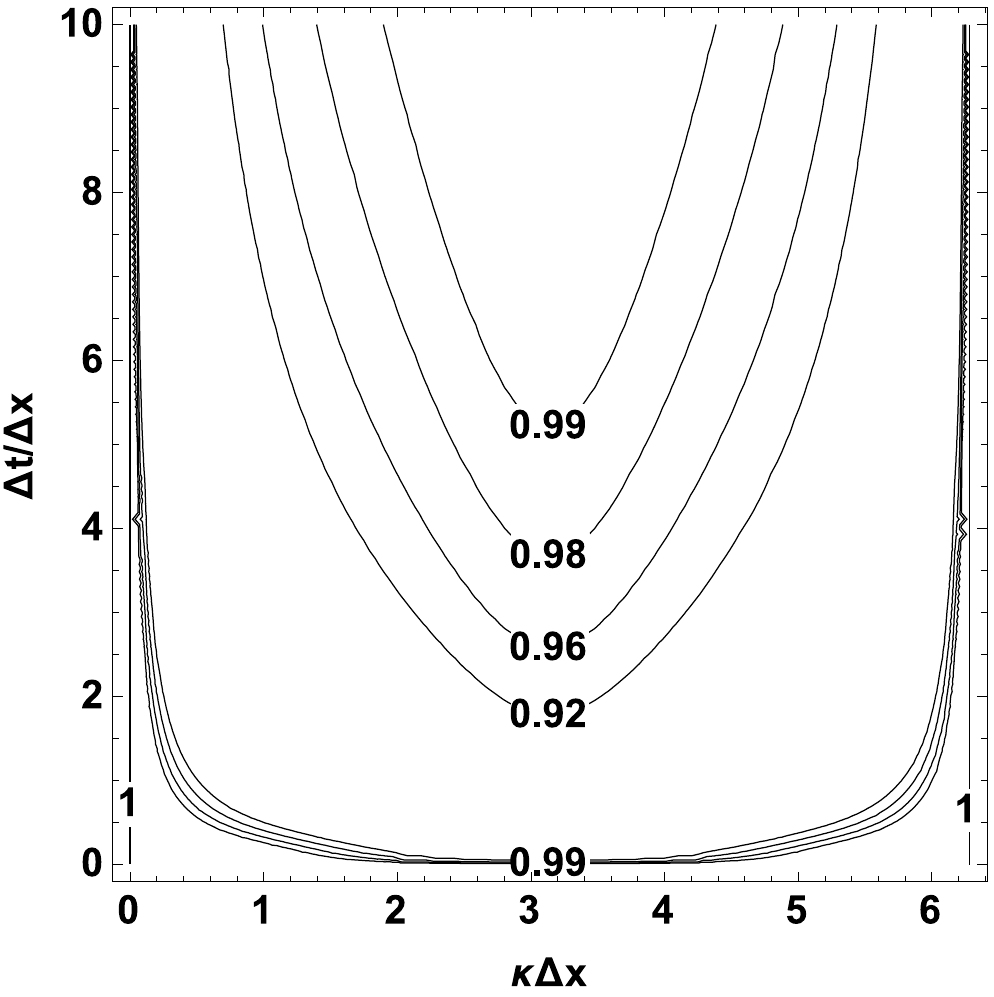}\label{Fig1add.2}}
	\subfigure[$k=3$. $\beta=1.243$.]{
		\includegraphics[width=0.3\textwidth]{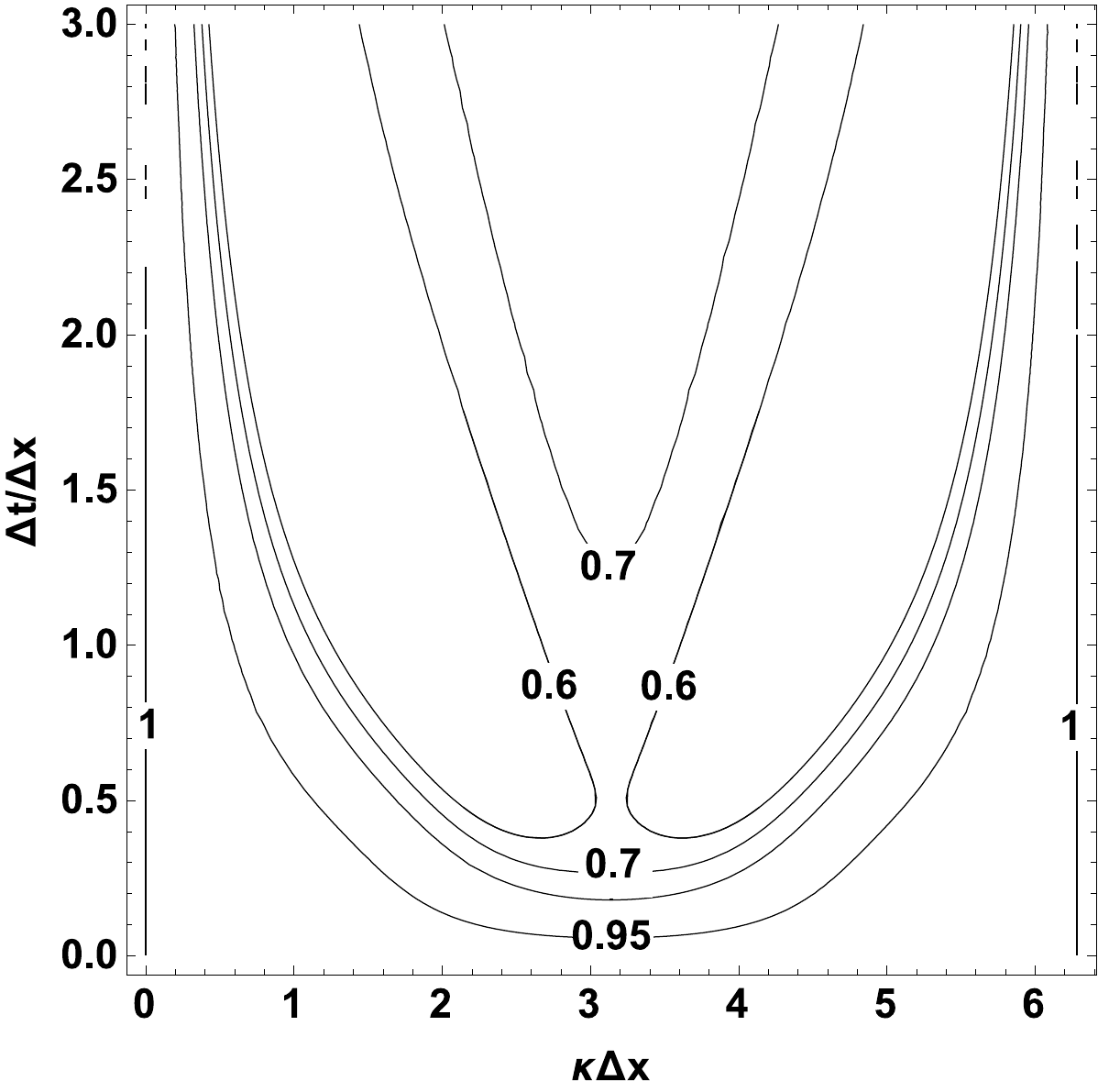}\label{Fig1add.3}}
	\subfigure[$k=1$. $\beta=2$.]{
		\includegraphics[width=0.3\textwidth]{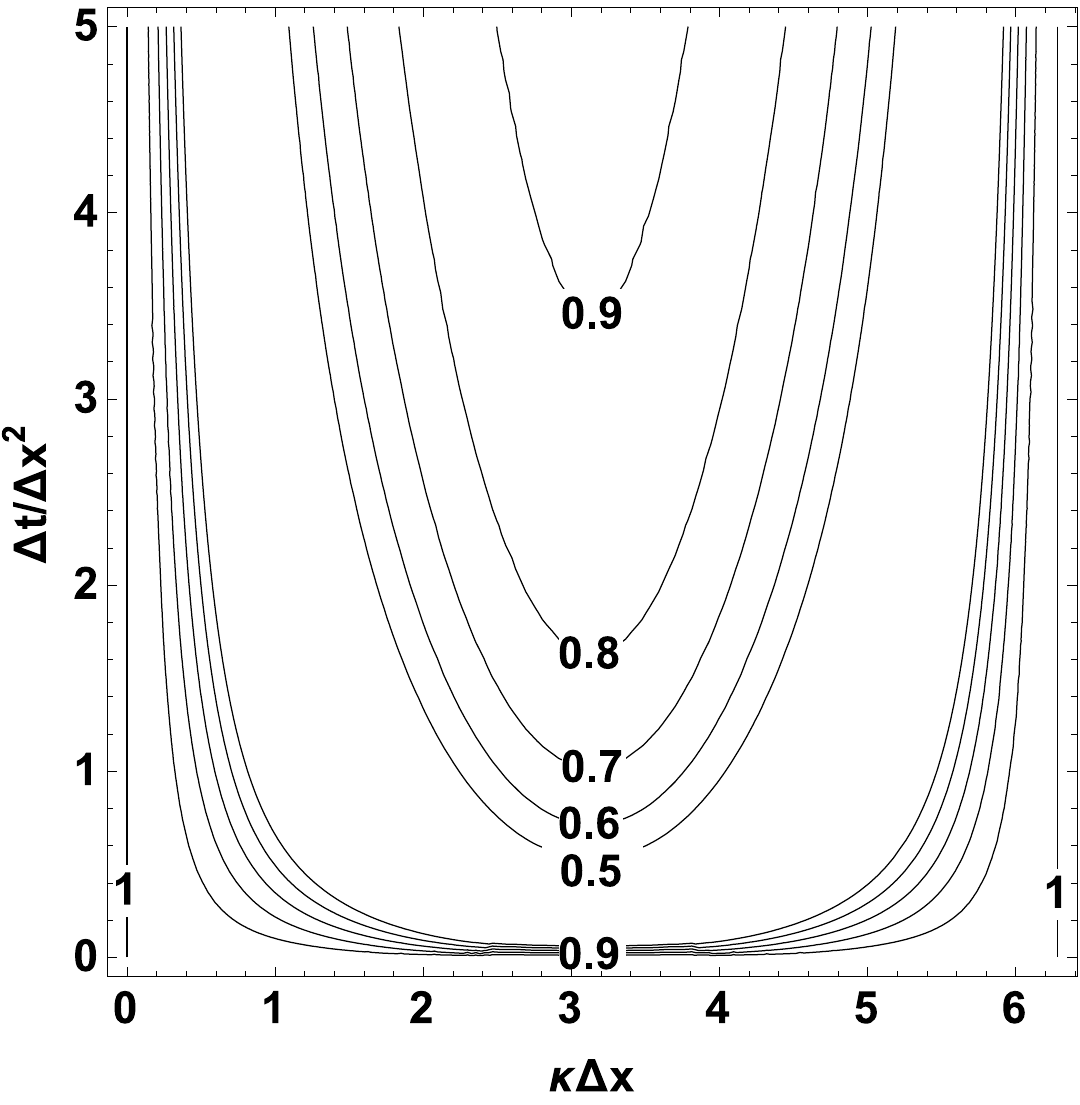}\label{Fig1add.4}}
	\subfigure[$k=2$. $\beta=1$.]{
		\includegraphics[width=0.3\textwidth]{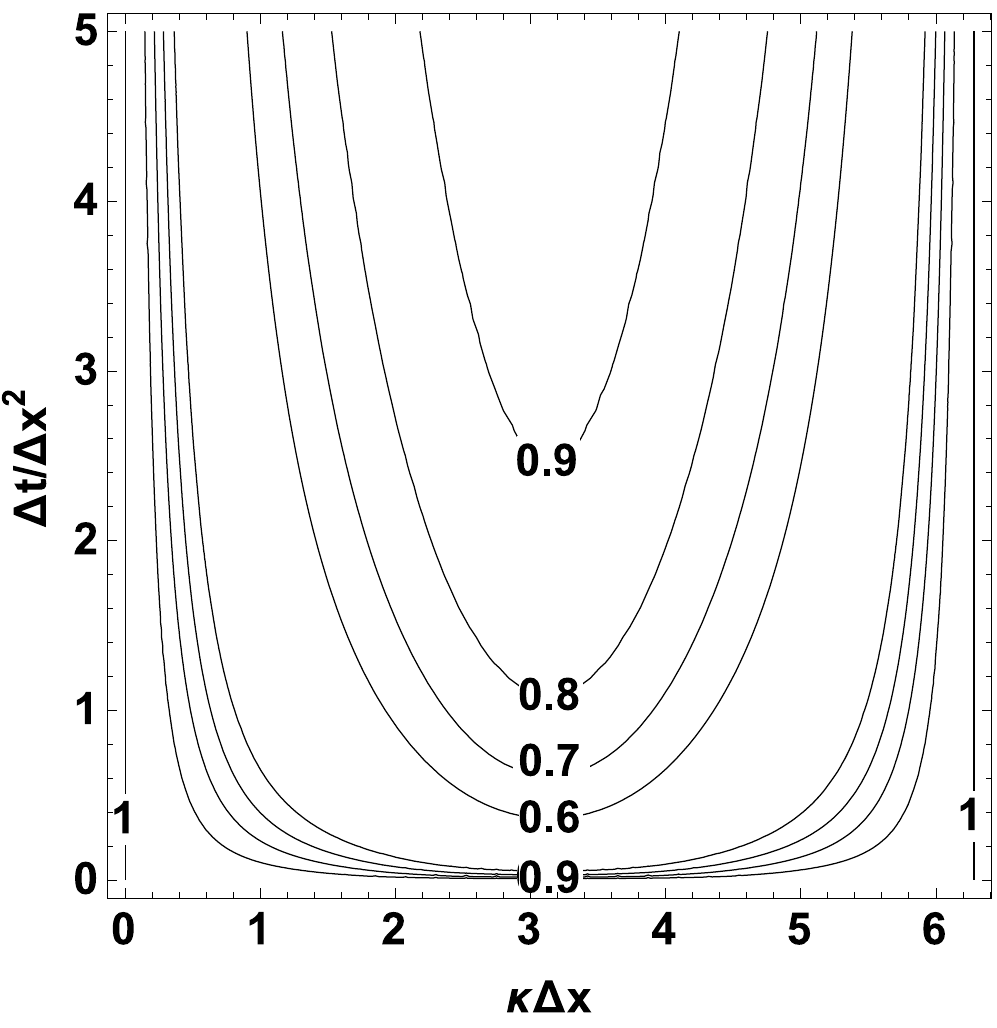}\label{Fig1add.5}}
	\subfigure[$k=3$. $\beta=0.8375$.]{
		\includegraphics[width=0.3\textwidth]{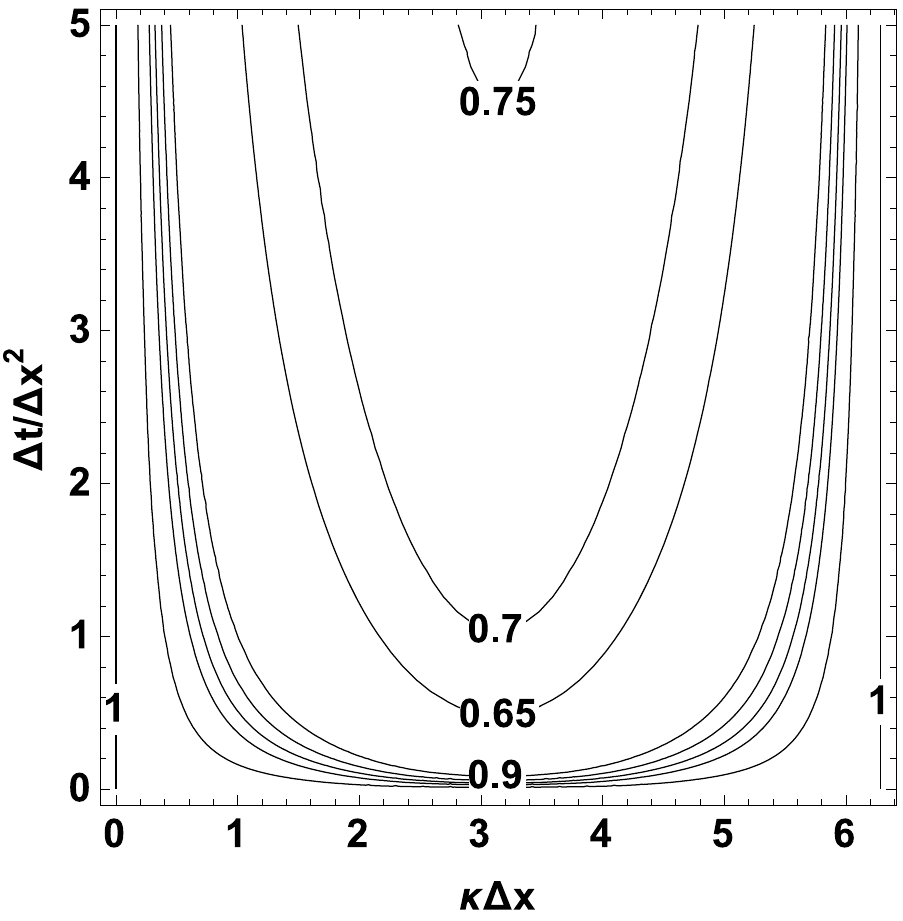}\label{Fig1add.6}}
	\caption{\em Contours of $|\lambda|$. Upper: linear advection equation $u_{t}+u_{x}=0$; below: linear diffusion equation $u_{t}=u_{xx}$. }
	\label{Fig1add}
\end{figure}

Combining the both cases in Theorem \ref{thm4}, Remark \ref{rem1} and above analysis, we can easily establish a similar unconditional stability property of the scheme for solving linear convection-diffusion problems.

\begin{thm}\label{thm3}
Consider the linear convection-diffusion problem 
\begin{align}
\label{eq:linearad}
u_{t}+c\,u_{x}=b\,u_{xx},
\end{align}
with periodic boundary conditions.
$c$ and $b$ are both constant and $b\geq0$. Suppose the scheme employs the $k^{th}$ order SSP RK method, the $k^{th}$ partial sum in \eqref{eq:H} or \eqref{eq:H2} for  $k=1,\,2,\,3$, and the linear quadrature rule \eqref{eq:linear}. Then, the scheme is unconditionally stable if $0<\beta\leq\beta_{k,\max}$, where $\beta_{k,max}:=\frac{1}{2} \min(\beta_{1,k,max},\beta_{2,k,\max})$, and the constants
$\beta_{1,k,max}$ and $\beta_{2,k,\max}$ are summarized in Table \ref{tab0}. 
\end{thm}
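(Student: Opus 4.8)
The plan is to run a Von Neumann analysis on the fully discrete scheme and reduce the combined convection--diffusion case to the two single-equation cases of Theorem \ref{thm4} and Remark \ref{rem1}. Since the scheme is linear and, for \eqref{eq:linearad}, the partial-sum operator $\mathcal H$ (\eqref{eq:H}, or the modified \eqref{eq:H2} when $k=3$) splits as an advection block built from $\widehat{\mathcal D}_L$ (or $\widehat{\mathcal D}_R$) with parameter $\alpha_L=\beta/(c\Delta t)$ plus a diffusion block built from $\widehat{\mathcal D}_0$ with $\alpha_0=\sqrt{\beta/(b\Delta t)}$, and all operators involved are Fourier multipliers, inserting $u^n_j=\hat u^n e^{i\kappa x_j}$ yields the amplification factor
\begin{align*}
\lambda = R_k\bigl(z_A+z_D\bigr),
\end{align*}
where $R_k$ is the stability polynomial of the $k$th order SSP RK method ($R_1(z)=1+z$, $R_2(z)=1+z+\tfrac12z^2$, $R_3(z)=1+z+\tfrac12z^2+\tfrac16z^3$) and $z_A,z_D$ denote $\Delta t$ times the discrete Fourier symbols of the advection and diffusion blocks, computed from the formulas for $\widehat{\mathcal D}_L$ and $\widehat{\mathcal D}_0$ preceding the theorem. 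Unconditional stability means $|R_k(z_A+z_D)|\le1$ for every $\kappa\Delta x\in[0,2\pi]$ and all $\Delta t,\Delta x>0$.

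The key observation is a rescaling identity. Because $z_A=-\beta\sum_{p=1}^k\widehat{\mathcal D}_L^{\,p}$ with $\widehat{\mathcal D}_L$ depending only on $\kappa\Delta x$ and $\alpha_L\Delta x=(\beta/c)(\Delta x/\Delta t)$, and $z_D=-\beta\sum_{p=1}^k\widehat{\mathcal D}_0^{\,p}$ with $\widehat{\mathcal D}_0$ depending only on $\kappa\Delta x$ and $\alpha_0\Delta x=\sqrt{(\beta/b)(\Delta x^2/\Delta t)}$, the set of attainable values of $z_A$ (resp.\ $z_D$) scales linearly in $\beta$. In particular, for any given data one can write $z_A=\tfrac12 z_A'$ and $z_D=\tfrac12 z_D'$, where $z_A'$ is an advection-block symbol of the \emph{pure} advection scheme run with parameter $2\beta$ and $z_D'$ a diffusion-block symbol of the \emph{pure} diffusion scheme with parameter $2\beta$ (each at a correspondingly rescaled CFL number). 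Now apply Theorem \ref{thm4}(a) together with Remark \ref{rem1} (the modified scheme \eqref{eq:H2} being used for $k=3$): if $2\beta\le\beta_{1,k,\max}$ then $z_A'\in S_k:=\{z:|R_k(z)|\le1\}$; and by Theorem \ref{thm4}(b), if $2\beta\le\beta_{2,k,\max}$ then $z_D'\in S_k$. Hence, for $0<\beta\le\beta_{k,\max}=\tfrac12\min(\beta_{1,k,\max},\beta_{2,k,\max})$, the combined symbol $z_A+z_D=\tfrac12 z_A'+\tfrac12 z_D'$ is the midpoint of two points of $S_k$. This is exactly where the factor $\tfrac12$ and the $\min$ originate.

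It remains to deduce $z_A+z_D\in S_k$, i.e.\ $|\lambda|\le1$. For $k=1$ this is immediate because $S_1=\{|z+1|\le1\}$ is a disk, hence convex, so the midpoint of two of its points stays inside. For $k=2,3$ the stability region $S_k$ need not be convex near the origin, so the midpoint argument is not self-contained; there one falls back on a direct numerical verification of $|R_k(z_A+z_D)|\le1$ over the whole parameter range, carried out exactly as the one-equation constants $\beta_{1,k,\max},\beta_{2,k,\max}$ in Theorem \ref{thm4} and Remark \ref{rem1} are justified through the level-set plots in Figure \ref{Fig1add}. Combining the Fourier decomposition, the rescaling identity, Theorem \ref{thm4}, Remark \ref{rem1} and this last check gives $|\lambda|\le1$ for all wavenumbers and time steps whenever $0<\beta\le\beta_{k,\max}$, which is the asserted unconditional stability.

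The main obstacle is precisely this last step for $k\ge2$: the explicit SSP RK stability region is not convex (for $k=3$ its boundary is tangent to the imaginary axis only to fourth order, with a slight bulge into the right half-plane), so one cannot simply average the admissible advection and diffusion symbols, and the natural rescaling only shows the combined symbol to be a convex combination of points of $S_k$ rather than a point of $S_k$. This non-convexity, compounded by the fact that the linear quadrature \eqref{eq:linear} perturbs the otherwise clean disk/interval shapes of the two symbol sets, is the reason the inequality $|\lambda|\le1$ is, as already noted in the text, tedious to derive in closed form and is instead confirmed numerically; for $k=1$ the argument is complete as stated.
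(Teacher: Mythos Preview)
The paper does not actually supply a proof of this theorem: it only states that the result follows by ``combining the both cases in Theorem~\ref{thm4}, Remark~\ref{rem1} and above analysis,'' where ``above analysis'' means the numerically produced contour plots of $|\lambda|$ for the pure advection and pure diffusion schemes in Figure~\ref{Fig1add}. Your proposal is therefore not competing with any argument in the paper---it is filling in what the paper leaves implicit---and it does so in a way that is consistent with, and more informative than, the paper's one-line assertion.

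Your rescaling observation, writing $z_A+z_D=\tfrac12(z_A'+z_D')$ with $z_A'$ and $z_D'$ the pure-advection and pure-diffusion symbols at parameter $2\beta$ (realized by keeping $\kappa\Delta x$ fixed and doubling $\Delta t$), is correct and is exactly what accounts for the factor $\tfrac12$ in $\beta_{k,\max}=\tfrac12\min(\beta_{1,k,\max},\beta_{2,k,\max})$; the paper never makes this explicit. For $k=1$ your convexity argument is complete and rigorous, since $S_1=\{|1+z|\le1\}$ is a disk; this is more than the paper offers. For $k=2,3$ you correctly identify that $S_k$ is not convex and fall back on a numerical check of $|R_k(z_A+z_D)|\le1$, which is the same level of rigor on which the single-equation constants $\beta_{1,k,\max},\beta_{2,k,\max}$ themselves rest. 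One small caveat worth noting: Figure~\ref{Fig1add} only treats the two pure cases separately, so the direct verification you invoke for the combined symbol is not actually exhibited anywhere in the paper; in that sense both your argument and the paper's claim are, for $k\ge2$, ultimately asserted rather than demonstrated, and you have pinpointed the genuine obstruction (non-convexity of the RK stability region).
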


\begin{table}[htb]
\caption{\label{tab0}\em $\beta_{\max}$ in Theorem \ref{thm4}, Remark \ref{rem1} and Theorem \ref{thm3} for  $k=1,\,2,\,3$.}
\centering
\begin{tabular}{|c|c|c|c|}
  \hline
  k  &  $\beta_{1,k,max}$ &  $\beta_{2,k,max}$  & $\beta_{k,max}$  \\\hline
  1  &  2  &  2  &  1     \\
  2  &  1  &  1  &  0.5     \\
  3  & 1.243  &  0.8375  &  0.4167  \\\hline
  \end{tabular}
\end{table}


\section{Two-dimensional implementation}
\label{section:2D}

Consider the following two-dimensional problem
\begin{equation}
\label{eq:2d}
u_{t}+f_{1}(u)_{x}+f_2(u)_{y}=g_{1}(u)_{xx}+g_2(u)_{yy}.
\end{equation}
The proposed one-dimensional formulation can be directly extended to solving \eqref{eq:2d} based on a dimension-by-dimension approach, namely, approximating $\partial_{x}$ and $\partial_{xx}$ for fixed $y_j$ and approximating $\partial_{y}$ and $\partial_{yy}$ for for fixed $x_i$. More specifically, for periodic boundary conditions
\begin{align*}
& g_{1}(u)_{xx}|_{(x_{i},y_{j})}\approx -\alpha^2_{0,x}\sum_{p=1}^{k}\mathcal{D}_{0}^{p} [g_1(\cdot,y_{j}),\alpha_{0,x}](x_{i}),\quad g_{2}(u)_{yy}|_{(x_{i},y_{j})}\approx -\alpha^2_{0,y}\sum_{p=1}^{k}\mathcal{D}_{0}^{p} [g_2(x_{i},\cdot),\alpha_{0,y}](y_{j}),
\end{align*}
where $\alpha_{0,x}=\sqrt{b_x/(\beta\Delta t)}$ and $\alpha_{0,y}=\sqrt{b_y/(\beta\Delta t)}$ with
$b_{x}=\max_{u}|g'_1(u)|$, $b_{y}=\max_{u}|g'_2(u)|$.
To approximate $f_{1}(u)_{x}$ and $f_{2}(u)_y$, the flux splitting strategy is still needed:
\begin{align*}
& f_{1}^{\pm}(u)=\frac{1}{2}(f_{1}(u)\pm c_{x}u), \quad
 f_{2}^{\pm}(u)=\frac{1}{2}(f_{2}(u)\pm c_{y}u), 
\end{align*}
where $c_{x}=\max_{u}|f'_{1}(u)|$ and  $c_{y}=\max_{u}|f'_{2}(u)|$.
Then, the dimension-by-dimension approach can be similarly applied. Again, for periodic boundary conditions, 
\begin{align*}
& f_{1}(u)_{x}|_{(x_{i},y_{j})}\approx -\alpha_{L,x}\sum_{p=1}^{k}\mathcal{D}_{L}^{p} [f_1^{+}(\cdot,y_{j}),\alpha_{L,x}](x_{i}) + \alpha_{R,x}\sum_{p=1}^{k}\mathcal{D}_{R}^{p} [f_1^{-}(\cdot,y_{j}),\alpha_{R,x}](x_{i}),\\
& f_{2}(u)_{y}|_{(x_{i},y_{j})}\approx -\alpha_{L,y}\sum_{p=1}^{k}\mathcal{D}_{L}^{p} [f^{+}_2(x_{i},\cdot),\alpha_{L,y}](y_{j}) + \alpha_{R,y}\sum_{p=1}^{k}\mathcal{D}_{R}^{p} [f^{-}_2(x_{i},\cdot),\alpha_{R,y}](y_{j}),
\end{align*}
or with a modified term for $k=3$. In addition, in the $x$-direction, we choose $\alpha_{L,x}=\alpha_{R,x}=\beta/(c_{x}\Delta t)$, and in the $y$-direction, we choose $\alpha_{L,y}=\alpha_{R,y}=\beta/(c_y\Delta t )$. 

In the two-dimensional case,  $\beta_{max}$ needs to be chosen as half of that for one-dimensional problems to attain the unconditional stability for the scheme. 
\section{Numerical results}
In this section, we present the numerical results to demonstrate  efficiency and efficacy of the proposed scheme. For one-dimensional problems, we choose the time step as
\begin{align*}
\Delta t=\text{CFL}\frac{\Delta x}{b+c},
\end{align*}
while for two-dimensional problems, the time step is set as
\begin{align*}
\Delta t=\frac{\text{CFL}}{\max((b_{x}+c_{x})/\Delta x,(b_{y}+c_{y})/\Delta y)}.
\end{align*}
We remark that, if the problem does not have an analytical solution, we will use the numerical solutions by the following first order numerical scheme
$$u^{n}_{i} = u^{n}_{i} - \frac{\Delta t}{\Delta x}\left( f^{+}_{i}-f^{+}_{i-1}\right) - \frac{\Delta t}{\Delta x}\left( f^{-}_{i+1}-f^{-}_{i}\right) + \frac{\Delta t}{\Delta x^2}\left( g_{i+1} - 2g_{i} + g_{i-1}\right)$$
 with $N=3000$ grid points and $\Delta t = 0.1\Delta x^2 /(c\Delta x+2b)$ as a reference solution.

\vspace{0.5cm}

\textbf{Example 1.}
 We test the accuracy of the scheme for the one-dimensional linear advection-diffusion problem
\begin{align}
\left\{\begin{array}{ll}
u_{t}+c\,u_{x}=b\,u_{xx}, & -\pi\leq x\leq\pi,\\
u(x,0)=\sin(x),
\end{array}
\right.
\end{align}
with the $2\pi$-periodic boundary condition.
Here, $c$ and $b\geq0$ are given constants. This problem has the exact solution
$u^{e}(x,t)=e^{-bt}\sin(x-ct).$

In Table \ref{tab1}-\ref{tab2}, we summarize the convergence study for the case of $c=1, b=0.01$ and $c=1, b=1$ at final time $T=2$, and the $L_{\infty}$ errors and the associated orders of accuracy are provided.
The analysis presented in Section 3 is verified that the use of the $k^{th}$ partial sum yields $k^{th}$ order accuracy. Moreover, the scheme allows for large CFL numbers due to its unconditionally stability.

\begin{table}[htb]
	\caption{\label{tab1}\em Example 1: $L_\infty$ errors and orders of accuracy at $T=2$. $c=1$ and $b=0.01$.  }
	\centering
	\begin{small}
		\begin{tabular}{|c|c|cc|cc|cc|}
			\hline
			\multirow{2}{*}{CFL} &  \multirow{2}{*}{$N_x$} & \multicolumn{2}{c|}{$k=1$. $\beta=1$.} & \multicolumn{2}{c|}{$k=2$. $\beta=0.5$.} & \multicolumn{2}{c|}{$k=3$. $\beta=0.4$.}\\
			\cline{3-8}
			& &  error &   order  &  error &  order  &  error  & order  \\\hline
			\multirow{5}{*}{0.5}
			&  40   &  7.260E-02  &     --      &  4.729E-02  &     --     &  2.559E-03  &     --      \\
			&  80   &  3.715E-02  &  0.967  &  1.218E-02  &  1.957  &  1.712E-04  &  3.902  \\
			& 160  &  1.885E-02  &  0.979  &  3.077E-03  &  1.985  &  1.091E-05  &  3.972  \\
			& 320  &  9.473E-03  &  0.992  &  7.703E-04  &  1.998  &  6.865E-07  &  3.990  \\
			& 640  &  4.750E-03  &  0.996  &  1.928E-04  &  1.999  &  4.357E-08  &  3.978  \\\hline	 				
			\multirow{5}{*}{1}
			&  40  &  1.388E-01  &     --      &  1.697E-01  &     --      &  3.263E-02   &     --      \\
			&  80  &  7.260E-02  &  0.935  &  4.729E-02  &  1.843  &  2.559E-03  &  3.672  \\
			& 160  &  3.717E-02  &  0.966  &  1.218E-02  &  1.956  &  1.712E-04  &  3.902  \\
			& 320  &  1.885E-02  &  0.980  &  3.077E-03  &  1.986  &  1.091E-05  &  3.973  \\
			& 640  &  9.473E-03  &  0.992  &  7.703E-04  &  1.998  &  6.864E-07  &  3.990 \\\hline 	   		           
			\multirow{5}{*}{2}
			&  40  &  2.474E-01  &     --      &  4.375E-01  &     --      &  2.313E-01  &     --      \\
			&  80  &  1.388E-01  &  0.834  &  1.697E-01  &  1.366  &  3.271E-02  &  2.822  \\
			& 160  &  7.260E-02  &  0.935  &  4.733E-02  &  1.842  &  2.561E-03  &  3.675  \\
			& 320  &  3.717E-02  &  0.966  &  1.218E-02  &  1.958  &  1.713E-04  &  3.902  \\
			& 640  &  1.885E-02  &  0.980  &  3.077E-03  &  1.986  &  1.091E-05  &  3.973 \\\hline	
		\end{tabular}
	\end{small}
\end{table}

\begin{table}[htb]
	\caption{\label{tab2}\em Example 1: $L_\infty$ errors and orders of accuracy at $T=2$. $c=1$ and $b=1$.  }
	\centering
	\begin{small}
		\begin{tabular}{|c|c|cc|cc|cc|}
			\hline
			\multirow{2}{*}{CFL} &  \multirow{2}{*}{$N_x$} & \multicolumn{2}{c|}{$k=1$. $\beta=1$.} & \multicolumn{2}{c|}{$k=2$. $\beta=0.5$.} & \multicolumn{2}{c|}{$k=3$. $\beta=0.4$.}\\
			\cline{3-8}
			& &  error &   order  &  error &  order  &  error  & order  \\\hline
			\multirow{5}{*}{0.5}
			&  40   &  1.047E-02  &     --      &  1.821E-03  &     --     &  1.912E-04  &     --      \\
			&  80   &  5.272E-03  &  0.990  &  4.953E-04  &  1.879  &  2.787E-05  &  2.779  \\
			& 160  &  2.646E-03  &  0.995  &  1.293E-04  &  1.937  &  3.751E-06  &  2.893  \\
			& 320  &  1.326E-03  &  0.997  &  3.307E-05  &  1.968  &  4.870E-07  &  2.946  \\
			& 640  &  6.637E-04  &  0.998  &  8.361E-06  &  1.984  &  6.206E-08  &  2.972  \\\hline		 	 				
			\multirow{5}{*}{1}
			&  40  &  2.043E-02  &     --      &  6.088E-03  &     --      &  1.117E-03   &     --      \\
			&  80  &  1.047E-02  &  0.964  &  1.822E-03  &  1.741  &  1.924E-04  &  2.537  \\
			& 160  &  5.272E-03  &  0.990  &  4.955E-04  &  1.878  &  2.788E-05  &  2.787  \\
			& 320  &  2.646E-03  &  0.995  &  1.293E-04  &  1.938  &  3.752E-06  &  2.893  \\
			& 640  &  1.326E-03  &  0.997  &  3.307E-05  &  1.968  &  4.869E-07  &  2.946  \\\hline 	  
			\multirow{5}{*}{2}
			&  40  &  3.941E-02  &     --      &  1.747E-02  &     --      &  4.522E-03  &     --      \\
			&  80  &  2.045E-02  &  0.946  &  6.098E-03  &  1.518  &  1.118E-03  &  2.016  \\
			& 160  &  1.047E-02  &  0.966  &  1.822E-03  &  1.743  &  1.924E-04  &  2.539  \\
			& 320  &  5.273E-03  &  0.990  &  4.955E-04  &  1.878  &  2.788E-05  &  2.787  \\
			& 640  &  2.646E-03  &  0.995  &  1.293E-04  &  1.938  &  3.752E-06  &  2.894 \\\hline	  
		\end{tabular}
	\end{small}
\end{table}

\vspace{0.5cm}

\textbf{Example 2.}
We test the porous medium equation (PME) \cite{muskat1937flow, aronson1986porous}
\begin{align}
\label{eq:PME}
u_{t}=(u^m)_{xx},
\end{align}
for some $m>1$. This equation describes a gas flowing isentropically in a porous medium, where the quantity $u$ represents density of the gas considered. However, for the PME, the classical solutions may not exist in general, even if the initial solution is smooth. Therefore, weak solutions must be considered, and their existence and uniqueness are studied in \cite{alt1983quasilinear, duyn1982nonstationary, otto1996l1}.

One famous weak solution of PME is the Barenblatt solution \cite{zel1950towards,barenblatt1952self}, which is defined as
$$B_{m}(x,t)=t^{-p}\big[ (1-\frac{p(m-1)}{2m}\frac{|x|^2}{t^{2p}})_{+} \big]^{1/(m-1)}, \ \ \ m>1,$$
where $u_{+}=\max(u,0)$ and $p=(m+1)^{-1}$. For any time $t>0$, the solution has a compact support $[-a_{m}(t),a_m(t)]$ with
$$a_{m}(t)=t^p\sqrt{\frac{2m}{p(m-1)}}.$$
Here, we choose $t=1$ as the initial time and the computation domain $[-6,6]$ with a zero boundary condition $u(\pm6,t)=0$. We plot the numerical solutions and exact solutions at $T=2$ with $200$ grid points, respectively, with $m=2, 3, 5$ and $8$ (Firgue \ref{Fig4}). Here, we only plot the results for the third order scheme, i.e. $k=3$, and $\beta$ is taken as 0.8. It is observed that our scheme is able to approximate the Barenblatt solution accurately without noticeable oscillations even with a large CFL number. On the other hand, the method with a smaller CFL generates sharper interface transition around $|x|=a_{m}$.

\begin{figure}
	\centering
	\subfigure[$m=2$. ]{
		\includegraphics[width=0.35\textwidth]{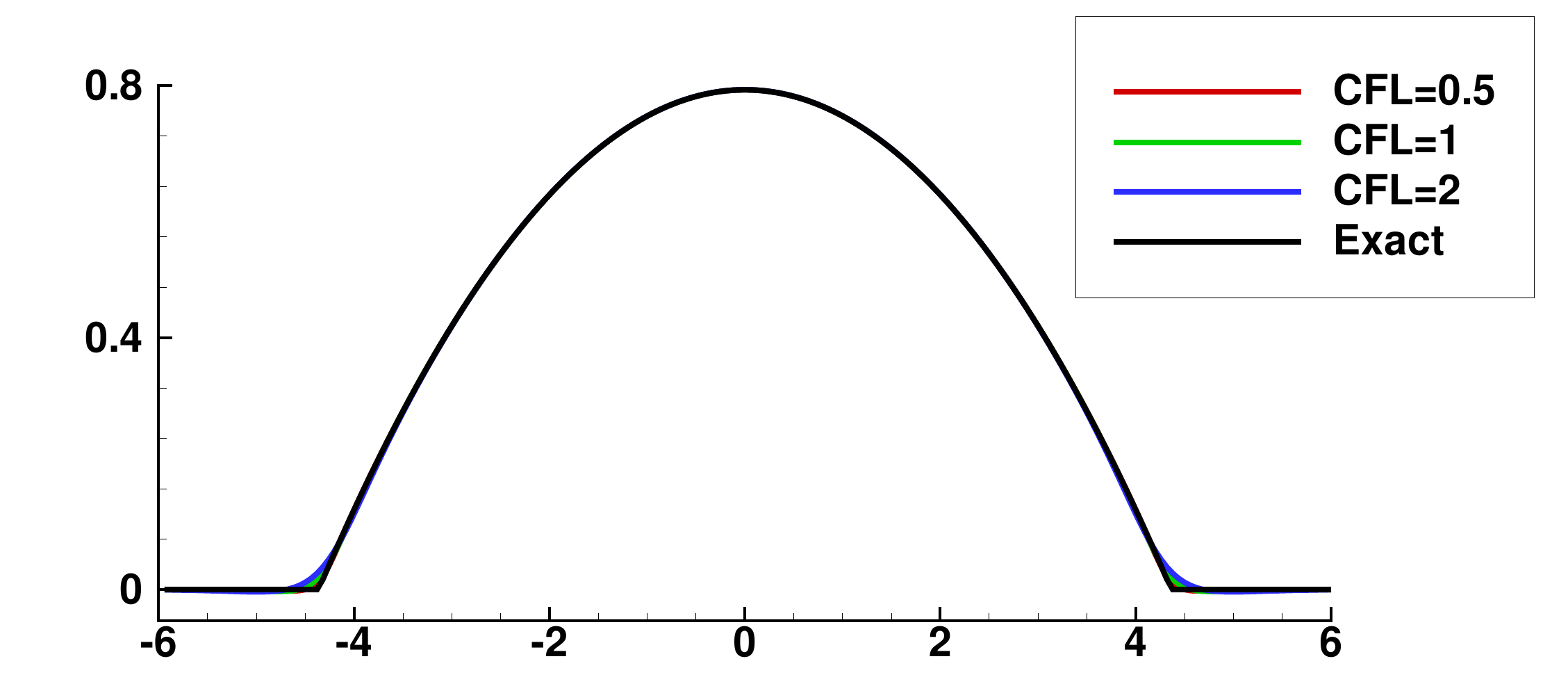}\label{Fig4.5}}
	\subfigure[$m=3$. ]{
		\includegraphics[width=0.35\textwidth]{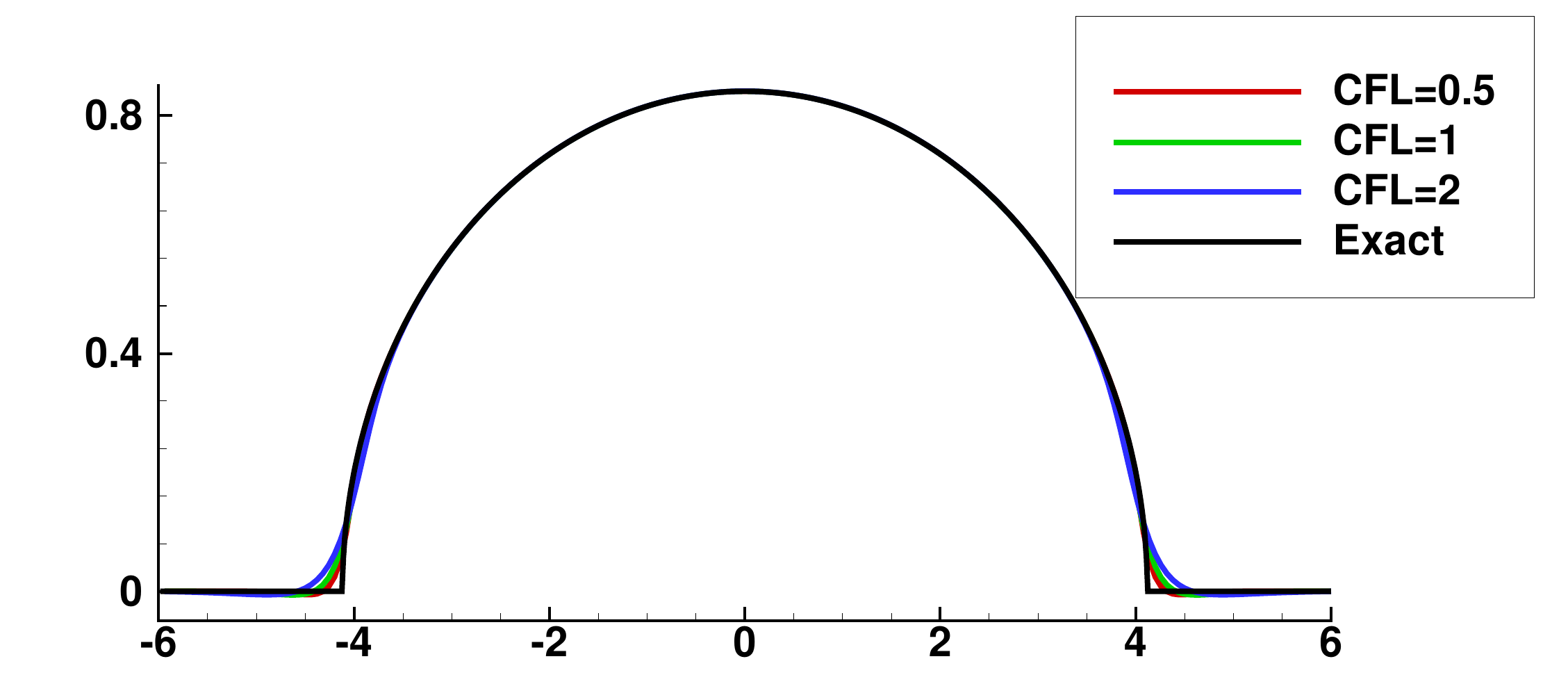}\label{Fig4.6}}
	\subfigure[$m=5$. ]{
		\includegraphics[width=0.35\textwidth]{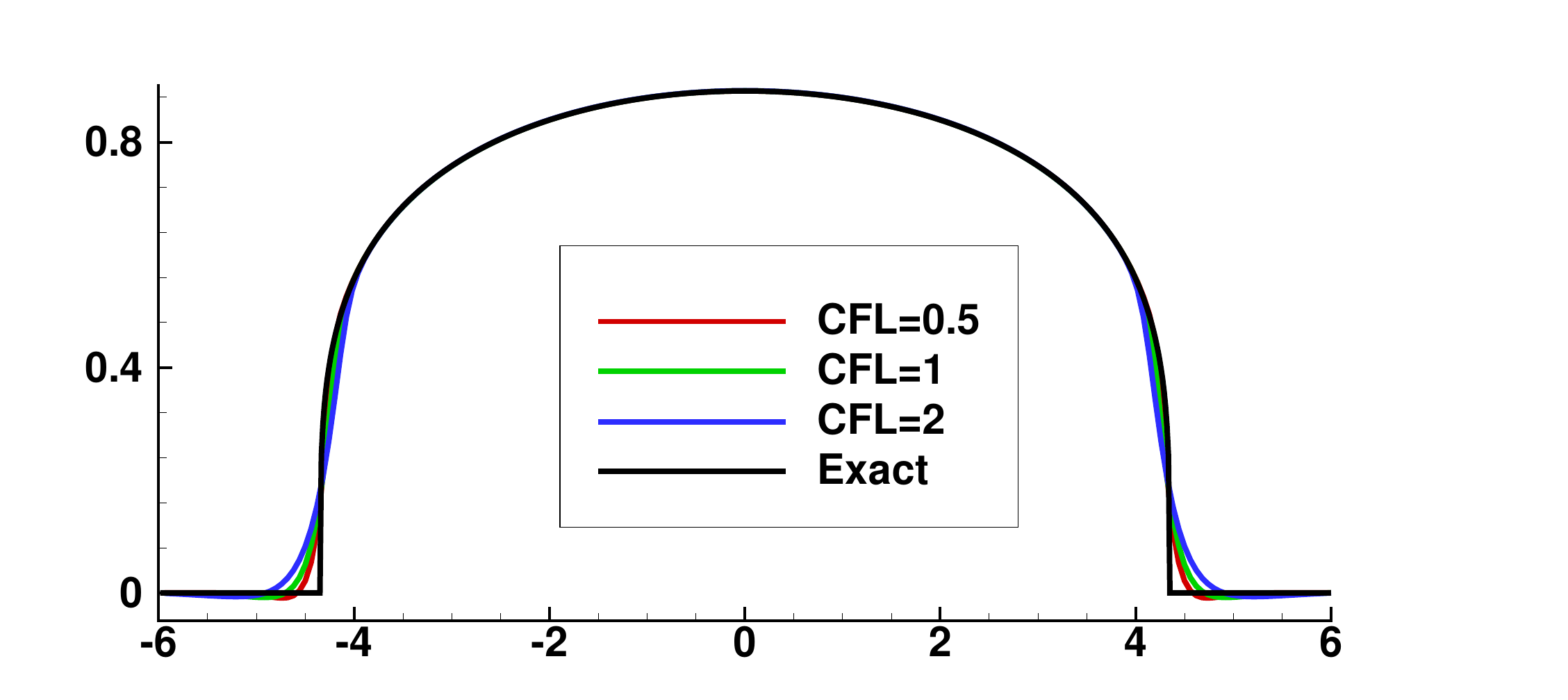}\label{Fig4.11}}
	\subfigure[$m=8$. ]{
		\includegraphics[width=0.35\textwidth]{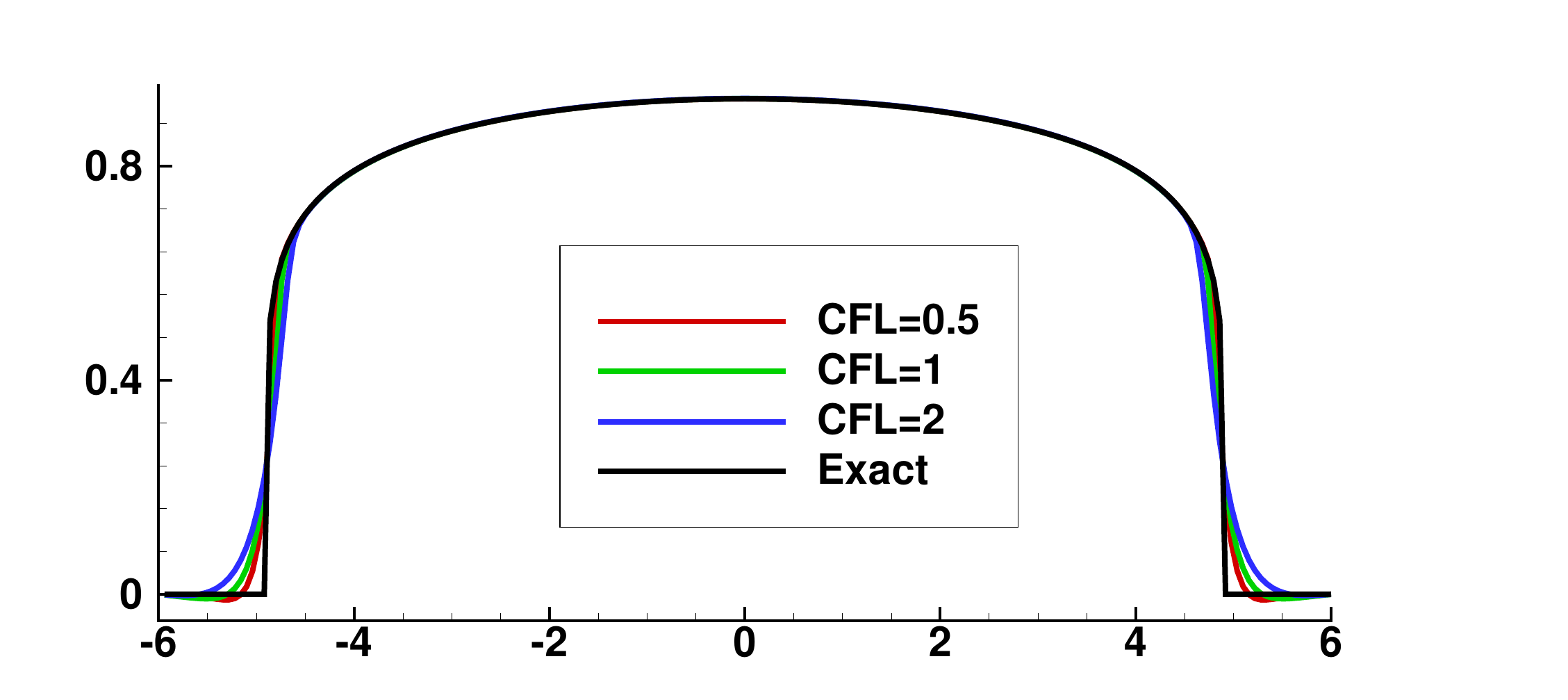}\label{Fig4.12}}
	\caption{\em Example 2: Barenblatt solution for PME. $N=200$ grid points. $k=3$. $\beta=0.8$. }
	\label{Fig4}
\end{figure}

\vspace{0.5cm}

\textbf{Example 3.}
Now, we consider the interaction of two boxes for PME \eqref{eq:PME}. Such a model describes how temperature changes when two hot spots are suddenly placed in the domain. Here, we choose the initial condition as
\begin{align}
u(x,0)=\left\{\begin{array}{ll}
1, & x\in(-4,-1),\\
2, & x\in(0,3),\\
0, & otherwise,
\end{array}
\right.
\end{align}
in which the two boxes have the different heights. We let $m=6$ in \eqref{eq:PME}. In Figure \ref{Fig11}, we show the numerical solution at several instances of time for $k=3$. The computational domain is chosen as $[-6,6]$ and a zero boundary condition $u(\pm 6,t)=0$ is imposed. Here, we use $N=400$ grid points. 
Note that the exact solution is unknown and we benchmark the scheme against the reference. 
 It is observed that the scheme is able to capture the sharp interface even though a large CFL number is used and numerical solutions agree with the reference solution very well.

\begin{figure}
	\centering
	\subfigure[$t=0.02$.]{
		\includegraphics[width=0.4\textwidth]{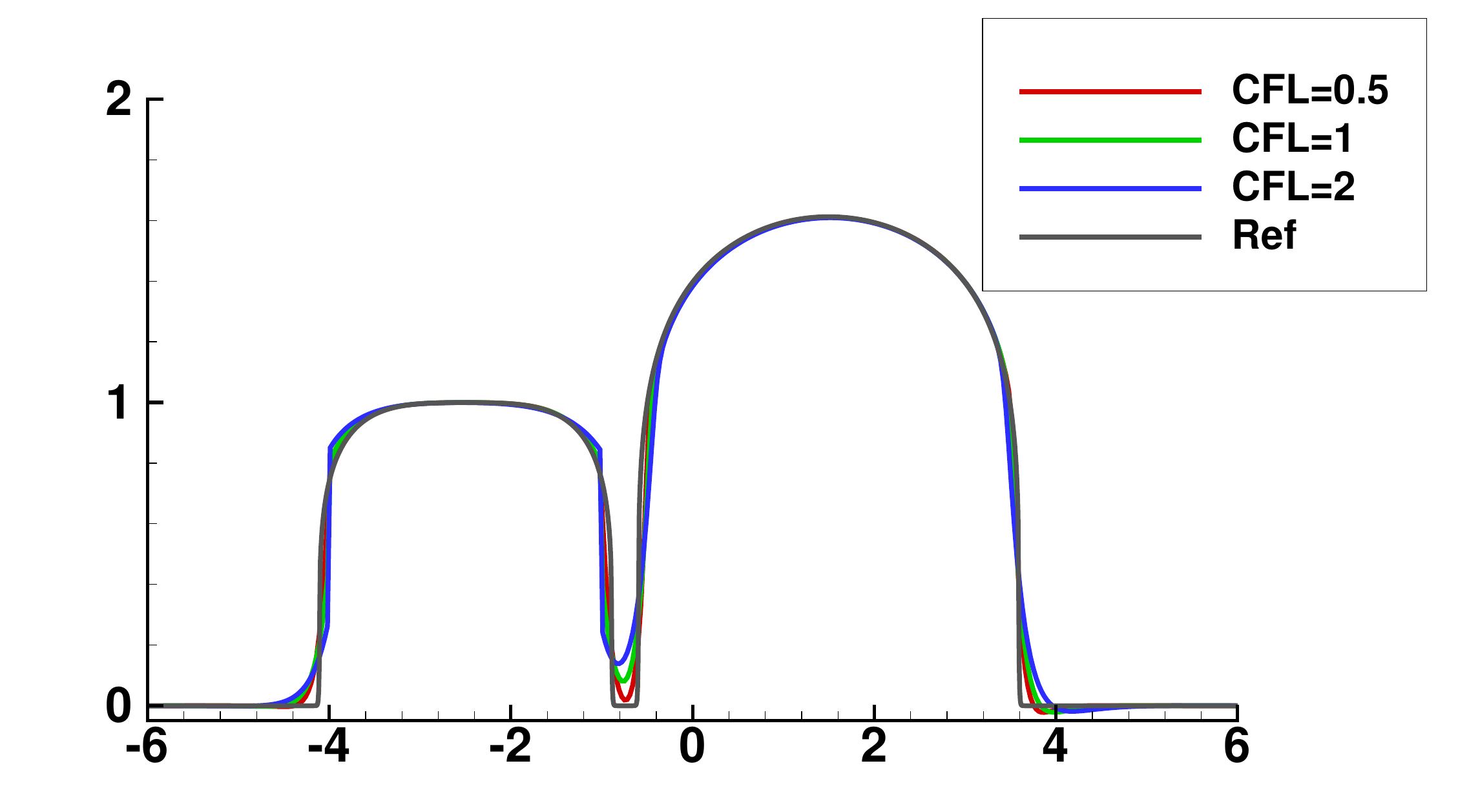}\label{Fig11.5}}
	\subfigure[$t=0.04$.]{
		\includegraphics[width=0.4\textwidth]{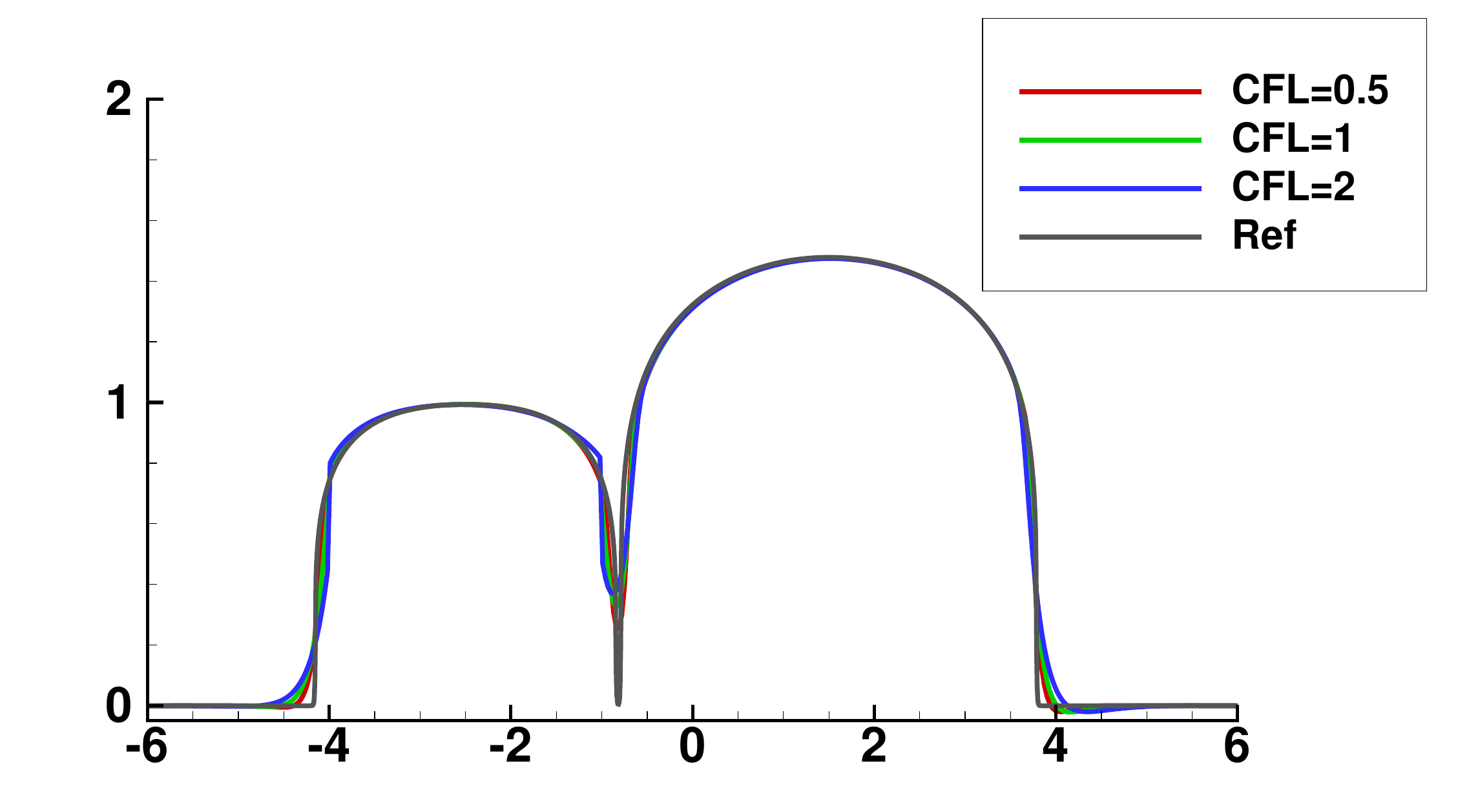}\label{Fig11.6}}
	\subfigure[$t=0.06$.]{
		\includegraphics[width=0.4\textwidth]{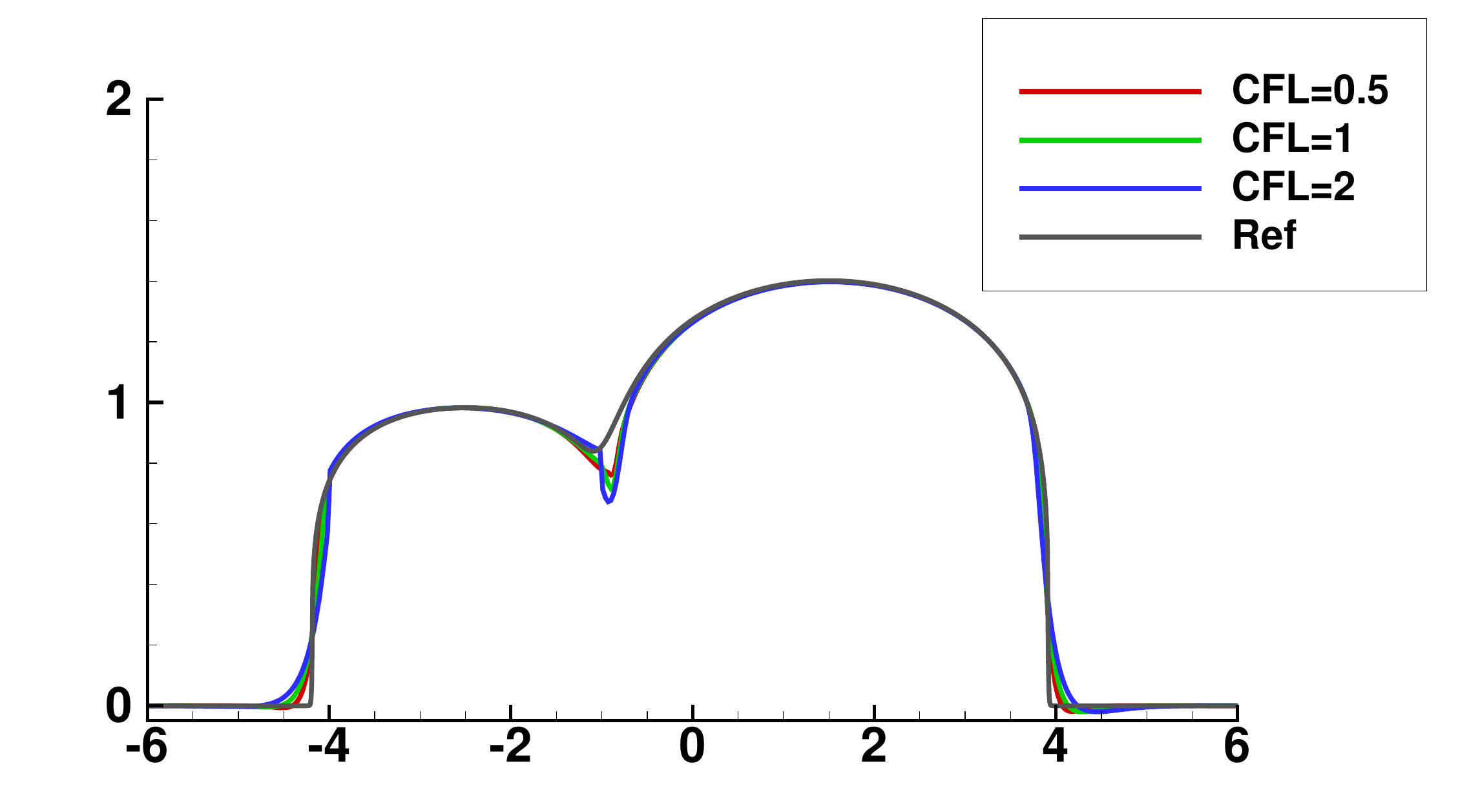}\label{Fig11.11}}
	\subfigure[$t=0.12$.]{
		\includegraphics[width=0.4\textwidth]{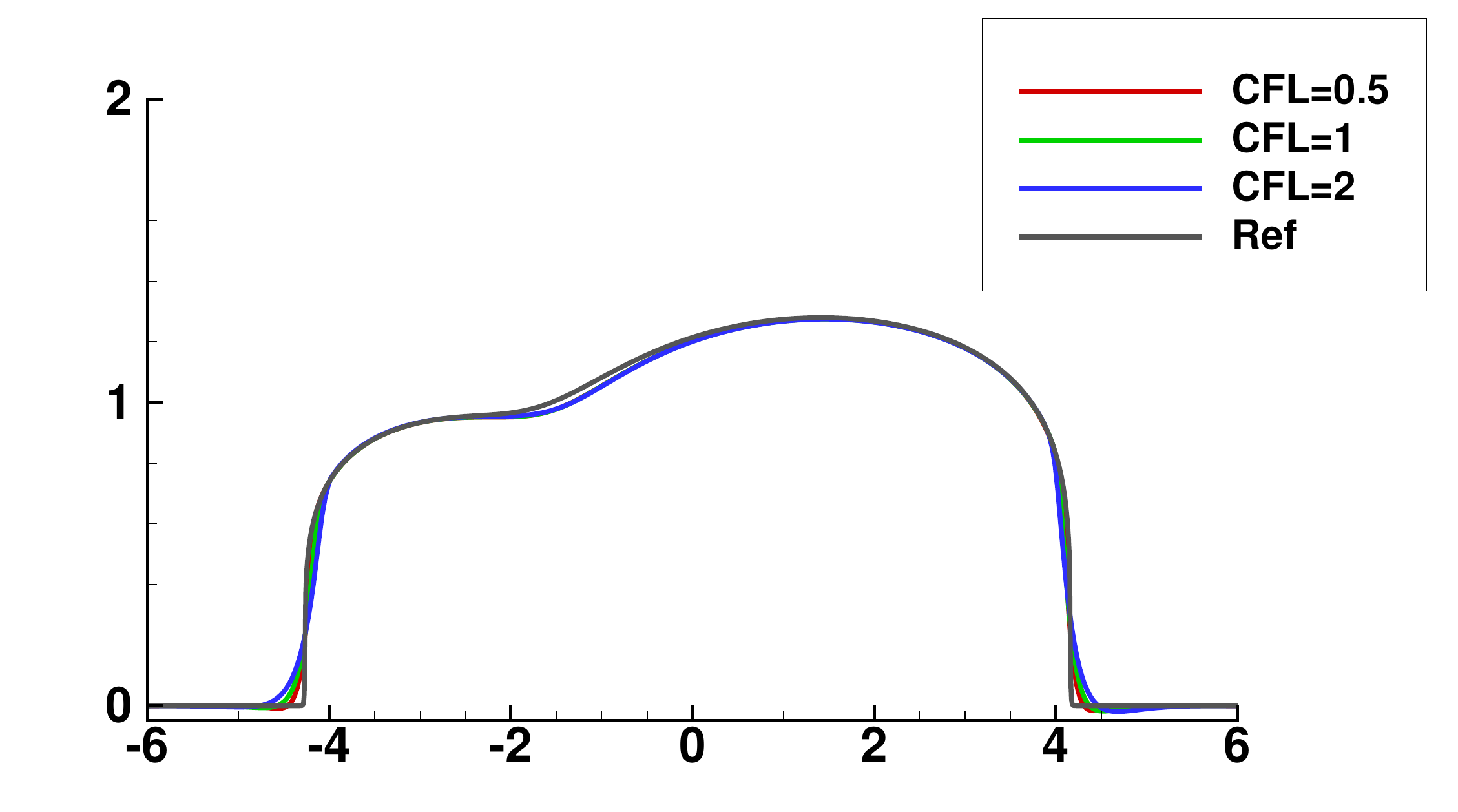}\label{Fig11.12}}
	\caption{\em Example 3: Interaction of the two-Box solution with different heights. $N=400$ grid points. $k=3$. $\beta=0.8$. }
	\label{Fig11}
\end{figure}

\vspace{0.5cm}

\textbf{Example 4.}
Next, let us consider the Buckley-Leverett equation \cite{buckley1942mechanism}
\begin{align}
\label{eq:BL}
u_{t}+f(u)_x=\epsilon(\nu(u)u_{x})_{x}.
\end{align}
In fluid dynamics, this equation is used to model two-phase flow in porous media, such as displacing oil by water in a one-dimensional or quasi-one-dimensional reservoir.
We choose 
\begin{align}
\label{eq:BLdiff}
\nu(u)=\left\{\begin{array}{ll}
4u(1-u), & 0\leq u\leq 1,\\
0, & \text{otherwise,}\\
\end{array}
\right.
\end{align}
and consider the flux without gravitational effects
\begin{align}
\label{eq:withoutgrav}
f(u)=\frac{u^2}{u^2+(1-u)^2},
\end{align}
as well as with gravitational effects
\begin{align}
\label{eq:withgrav}
f(u)=\frac{u^2}{u^2+(1-u)^2}(1-5(1-u)^2).
\end{align}
In the simulation, we let $\epsilon=0.01$.
The initial condition
\begin{align*}
u(x,0)=\left\{\begin{array}{ll}
0, & 0\leq x<1-\frac{1}{\sqrt{2}},\\
1, & 1-\frac{1}{\sqrt{2}}\leq x\leq1.\\
\end{array}
\right.
\end{align*}

Numerical solutions for $k=1,\,2,\,3$ are presented in Figure \ref{Fig2:bl}. It is observed that the scheme with a small CFL, e.g. 0.5 outperforms the one with a large CFL, e.g. 2. In fact, even though the scheme is unconditionally stable, the performance may not be satisfactory when a exceedingly large CFL number is used, which will introduce too much numerical diffusion and thus smear the interface. Nevertheless,
when a small CFL number is used, the scheme is able to solve both Riemann problems accurately without generating noticeable spurious oscillations, and the solution is benchmarked against the results reported in \cite{kurganov2000new, liu2011high}.

\begin{figure}
	\centering
	\subfigure[$k=1$. $\beta=1$.]{
		\includegraphics[width=0.3\textwidth]{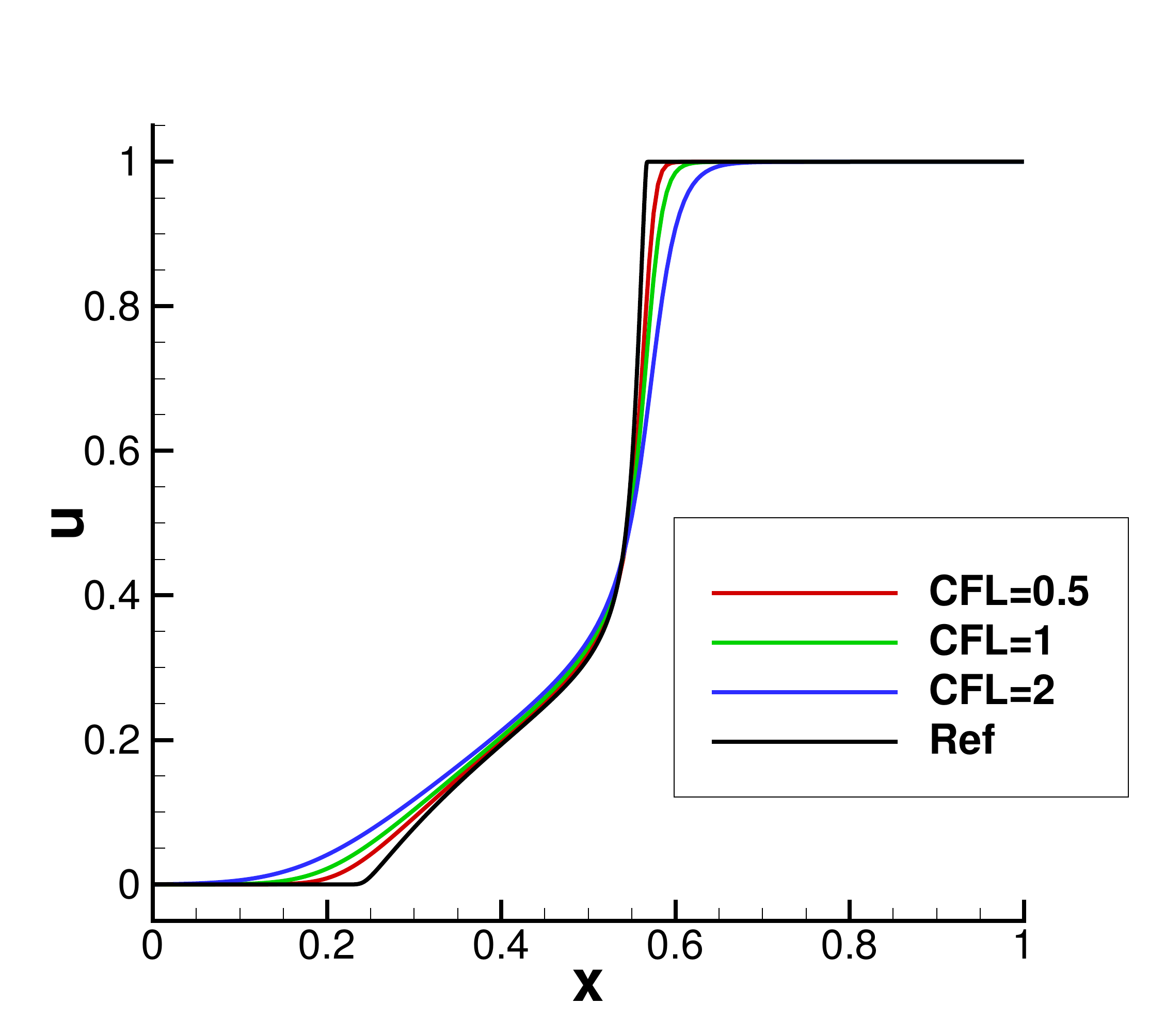}\label{Fig2.1:bl}}
	\subfigure[$k=2$. $\beta=0.5$.]{
		\includegraphics[width=0.3\textwidth]{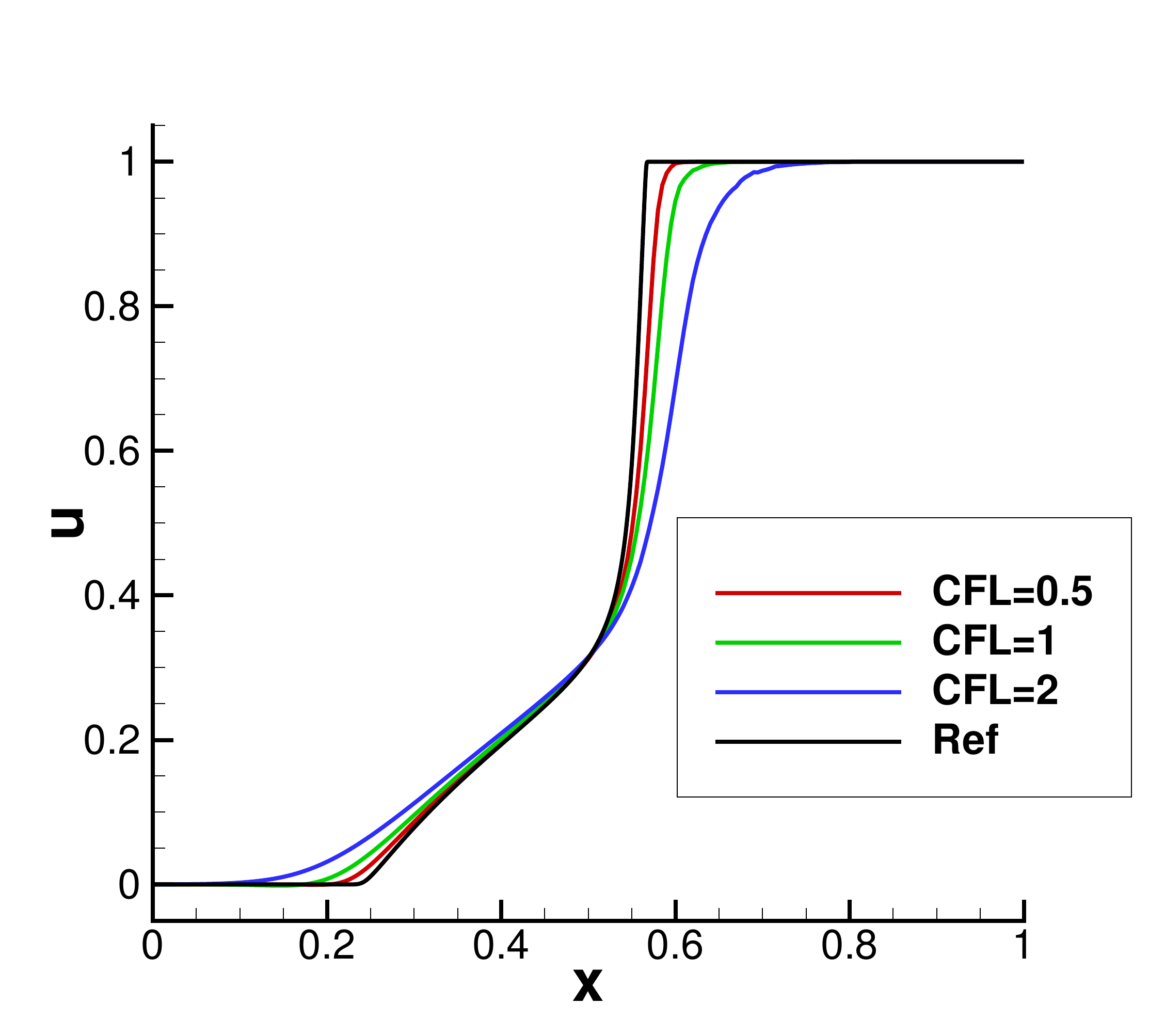}\label{Fig2.2:bl}}
	\subfigure[$k=3$. $\beta=0.4$.]{
		\includegraphics[width=0.3\textwidth]{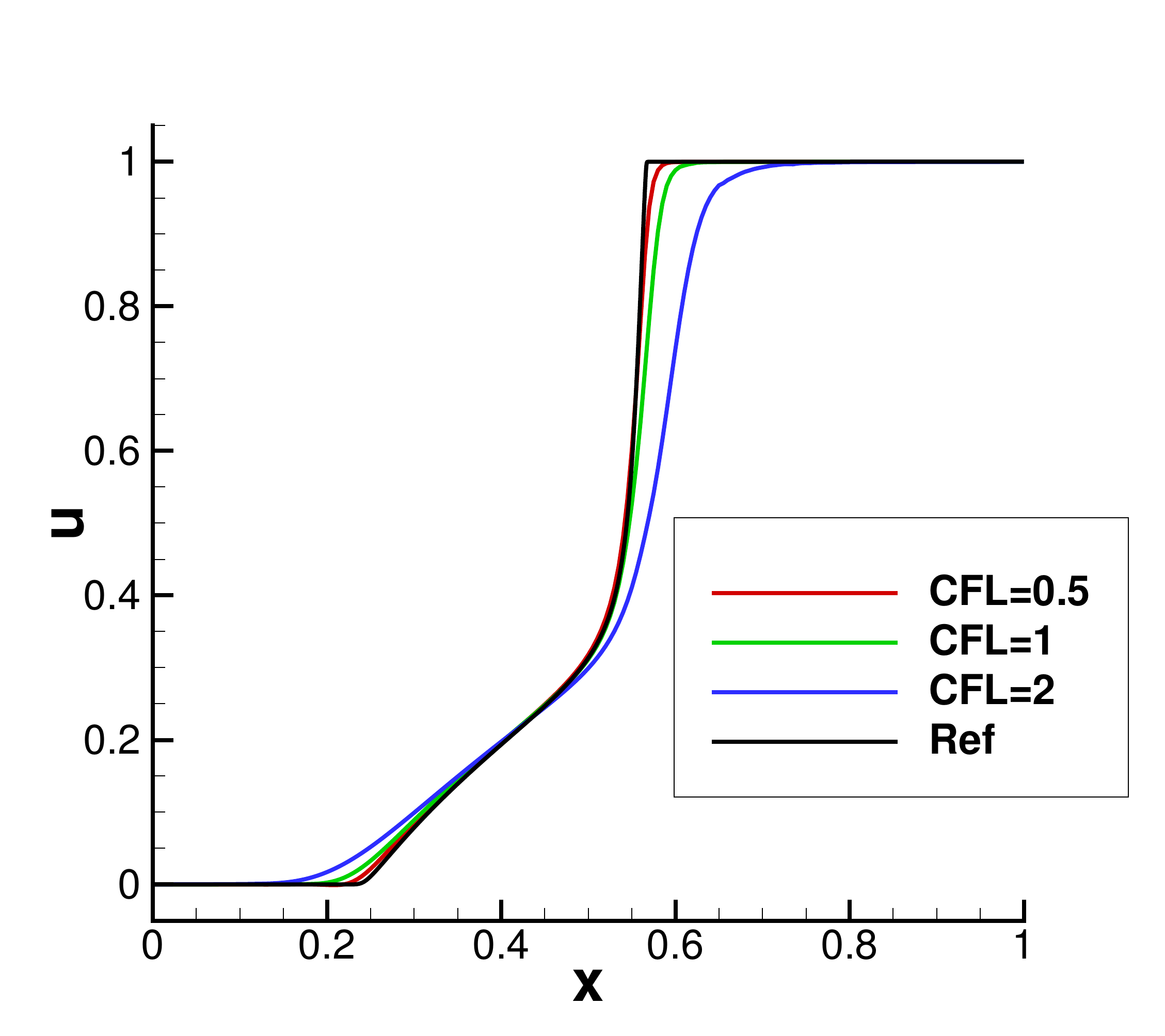}\label{Fig2.3:bl}}
	\subfigure[$k=1$. $\beta=1$.]{
		\includegraphics[width=0.3\textwidth]{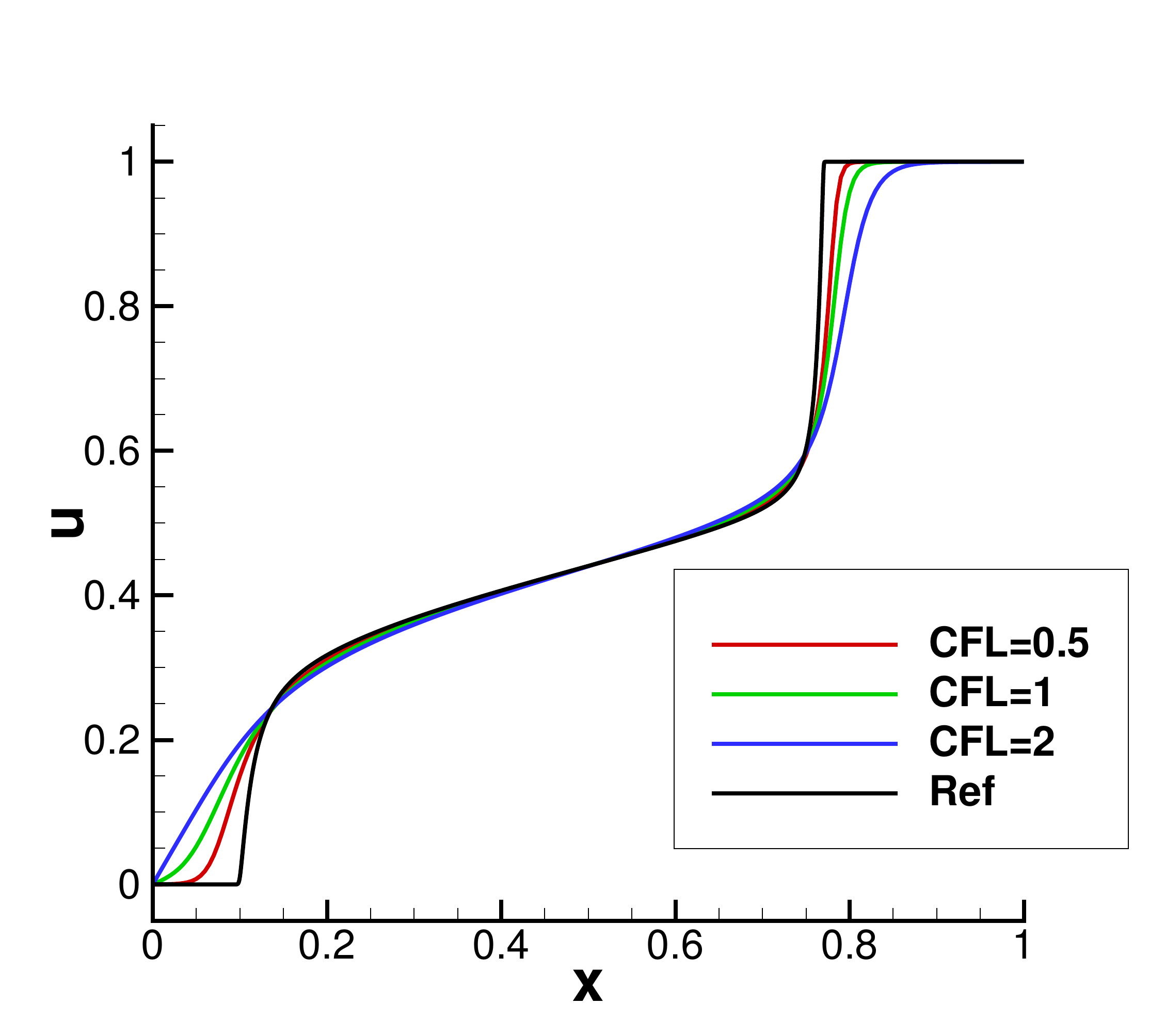}\label{Fig2.4:bl}}
	\subfigure[$k=2$. $\beta=0.5$.]{
		\includegraphics[width=0.3\textwidth]{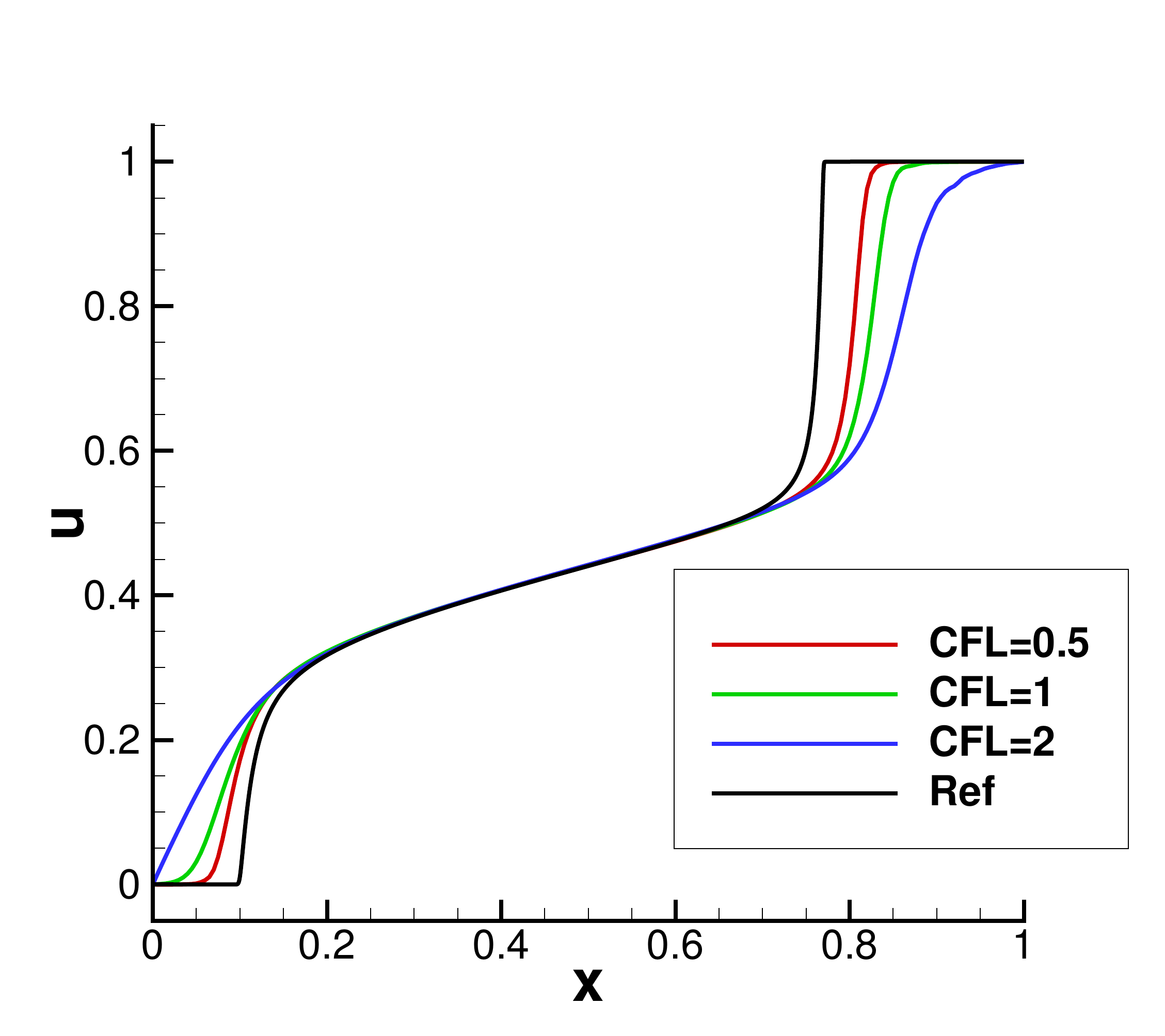}\label{Fig2.5:bl}}
	\subfigure[$k=3$. $\beta=0.4$.]{
		\includegraphics[width=0.3\textwidth]{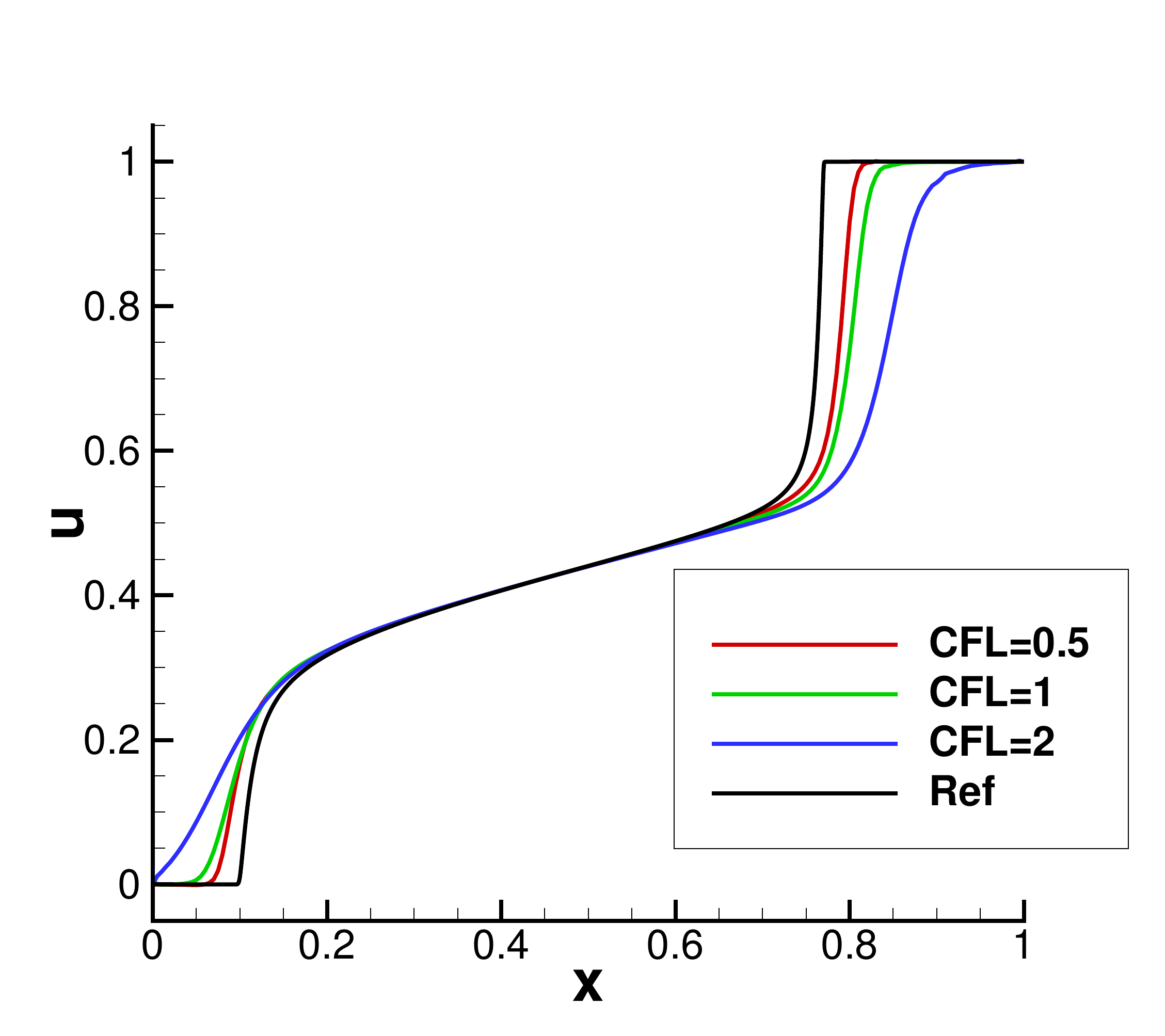}\label{Fig2.6:bl}}
	\caption{\em Example 4: Buckley-Leverett equation. $T=0.2$. $N=200$ grid points. First row: without gravitition; second row: with gravitition.}
	\label{Fig2:bl}
\end{figure}

\vspace{0.5cm}

\textbf{Example 5.}
In this example, we consider a strongly degenerate parabolic convection-diffusion equation
\begin{align}
\label{eq:SD}
u_{t}+f(u)_x=\epsilon(\nu(u)u_x)_x
\end{align}
We take $\epsilon=0.1$, $f(u)=u^2$, and
\begin{align}
\nu(u)=\left\{\begin{array}{ll}
0, & |u|\leq 0.25,\\
1, & |u|>0.25 .\\
\end{array}
\right.
\end{align}
The choice of $\mu$ will lead to an interesting fact that the equation is hyperbolic when $u\in[-0.25,0.25]$ and parabolic elsewhere. We solve the problem with the initial function
\begin{align}
u(x,0)=\left\{\begin{array}{ll}
1, & -\frac{1}{\sqrt{2}}-0.4<x<-\frac{1}{\sqrt{2}}+0.4,\\
-1,& \frac{1}{\sqrt{2}}-0.4<x<\frac{1}{\sqrt{2}}+0.4.\\
0, & otherwise\\
\end{array}
\right.
\end{align}
 Numerical results are presented in Figure \ref{Fig8}. In particular, we compare the performance of the schemes with different orders of accuracy, i.e, $k=1
 ,2\,3$. It is observed that the high order scheme performs better in capturing the sharp interface as well as the kinks where the equation changes its type. As expected,  the performance deteriorates 
 when a large CFL number is used. 

\begin{figure}
	\centering
	\subfigure[ $k=1$. $\beta=1$.]{
		\includegraphics[width=0.3\textwidth]{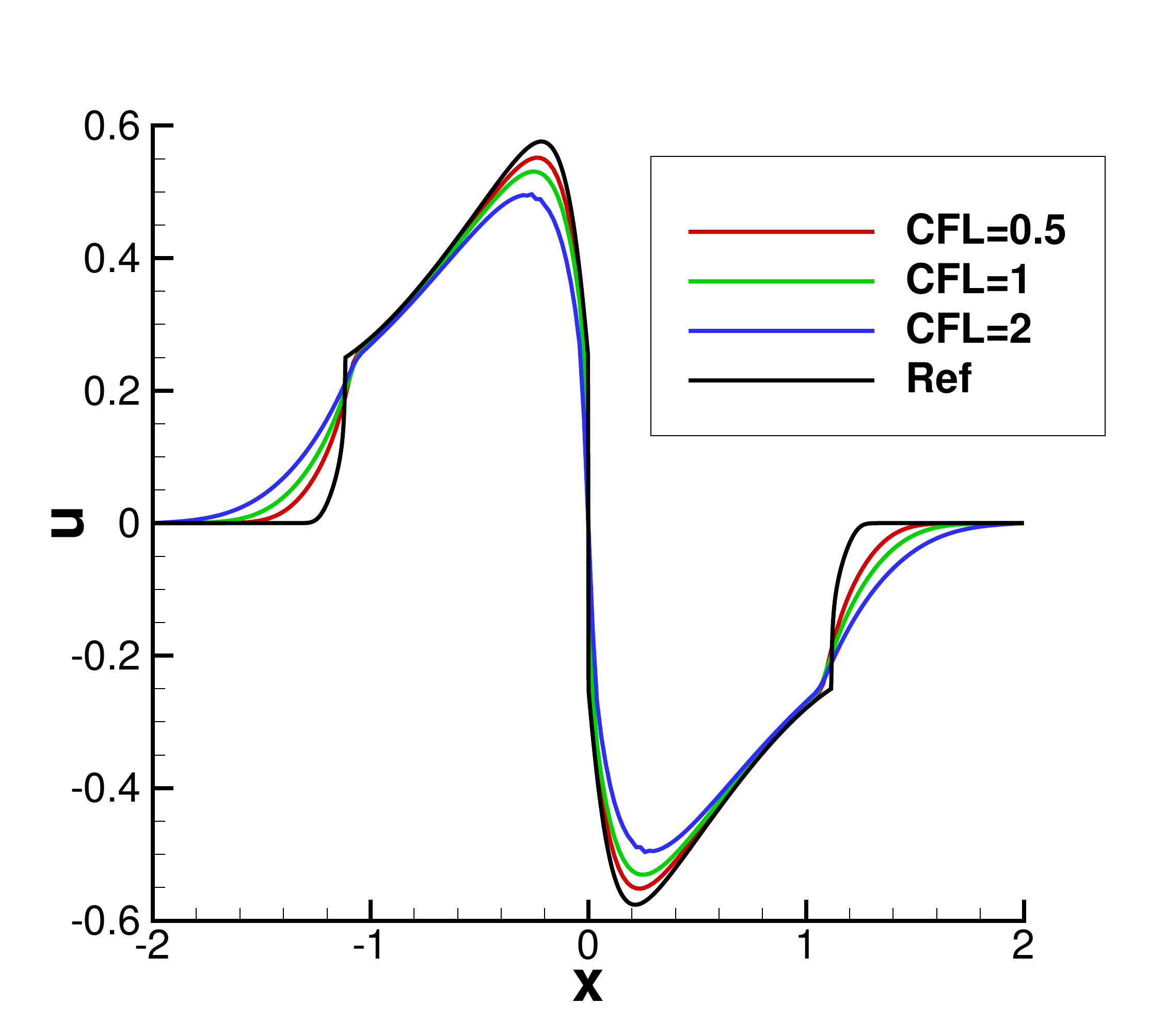}\label{Fig8.1}}
	\subfigure[$k=2$. $\beta=0.5$.]{
		\includegraphics[width=0.3\textwidth]{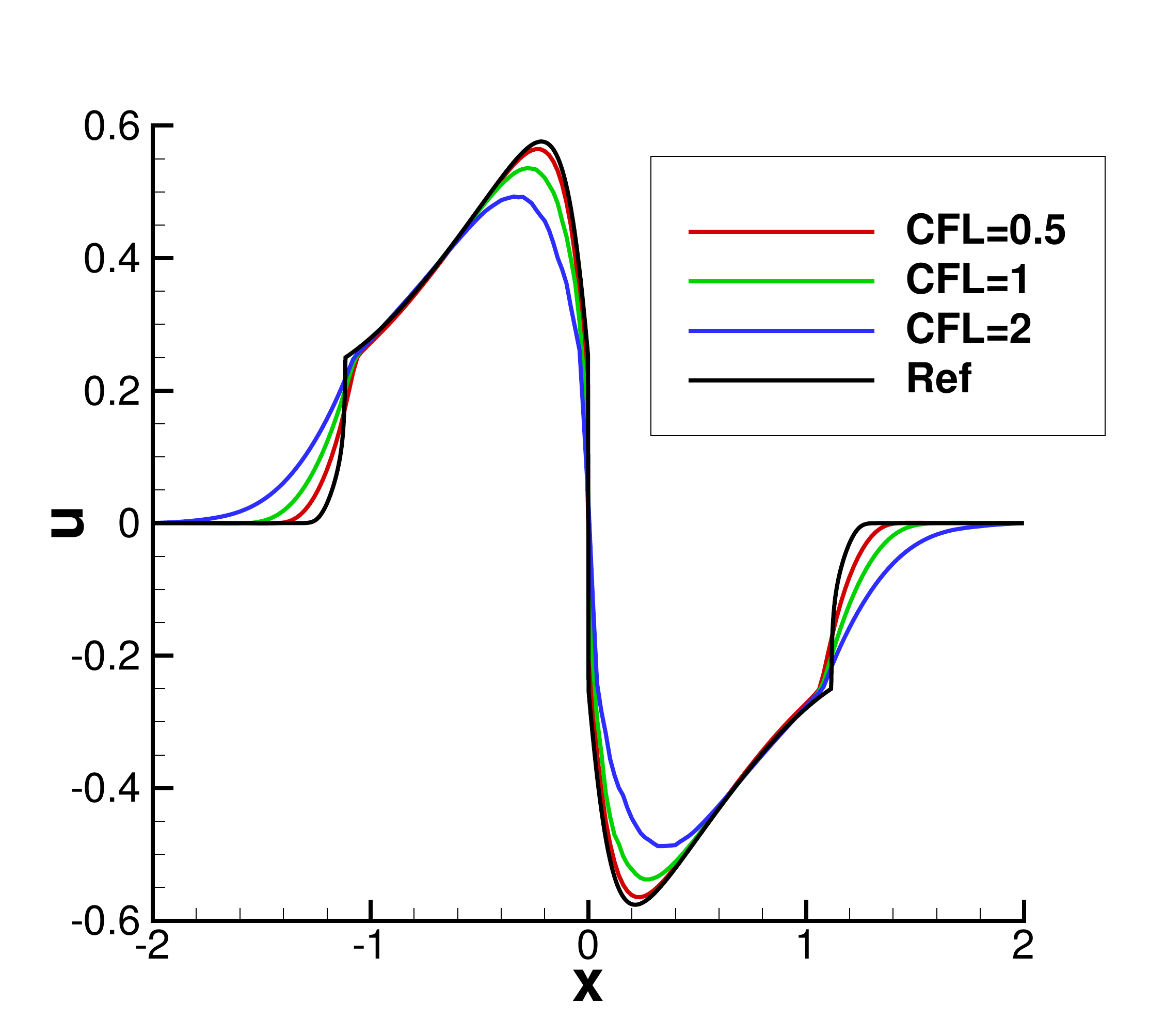}\label{Fig8.2}}
	\subfigure[$k=3$. $\beta=0.4$.]{
		\includegraphics[width=0.3\textwidth]{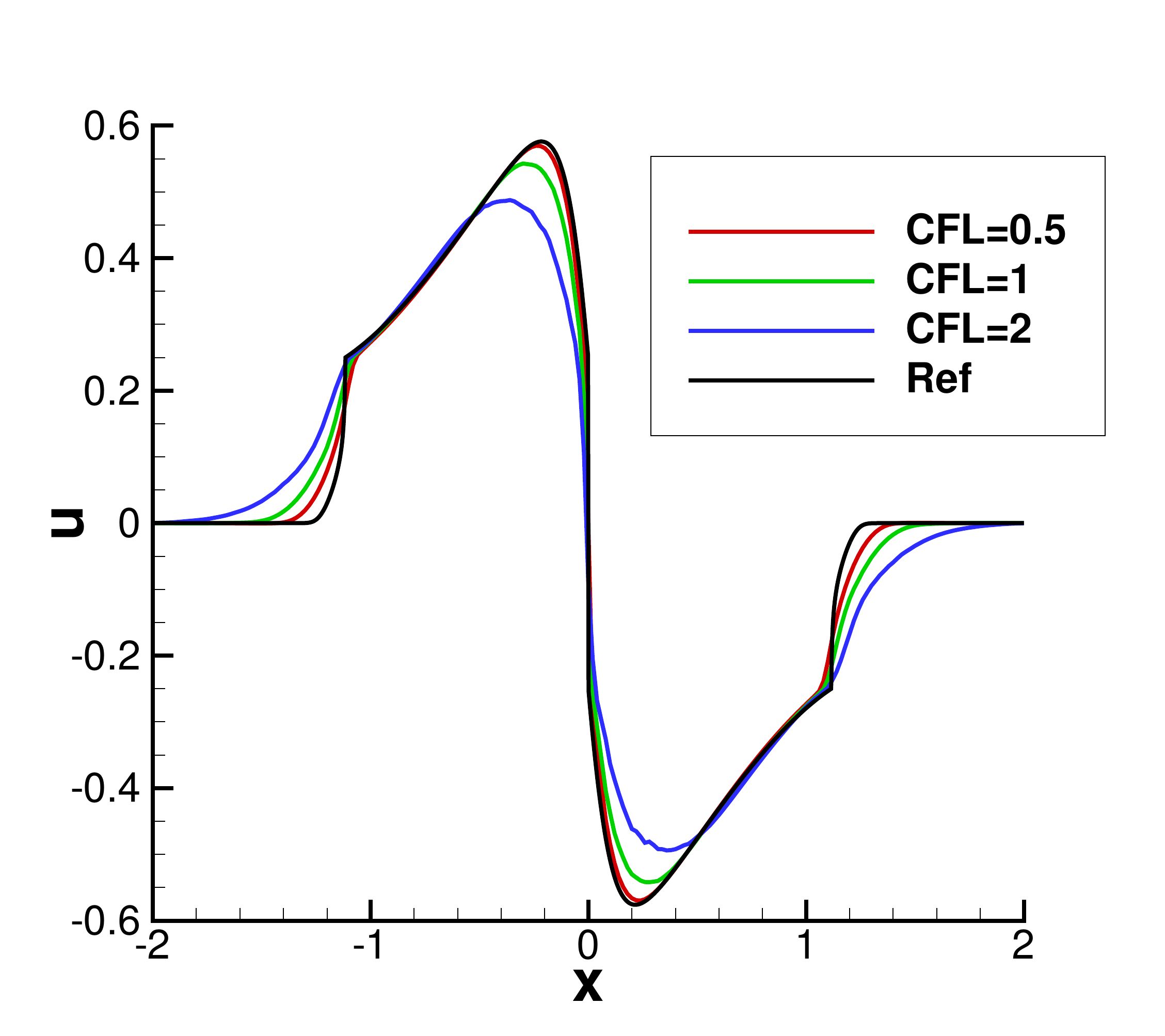}\label{Fig8.3}}
	\caption{\em Example 5: 1D strong degenerate parabolic equation. $T=0.7$. $N=200$ grid points. }
	\label{Fig8}
\end{figure}

\vspace{0.5cm}

\textbf{Example 6.}
We consider the two-dimensional strongly degenerate parabolic convection-diffusion equation
\begin{align}
\label{eq:SD2D}
u_{t}+f(u)_x+f(u)_y=\epsilon(\nu(u)u_x)_x+\epsilon(\nu(u)u_y)_y,
\end{align}
in which, $\epsilon$, $f(u)$ and $\nu(u)$ are the same as the one-dimensional case. The initial function is given as
\begin{align}
u(x,y,0)=\left\{\begin{array}{ll}
1, & (x+0.5)^2+(y+0.5)^2<0.16,\\
-1,& (x-0.5)^2+(y-0.5)^2<0.16,\\
0, & otherwise.\\
\end{array}
\right.
\end{align}
The solutions at $T=0.5$ computed by the third order scheme with $\beta=0.2$, $CFL=0.5$ and $200\times200$ grid points are shown in Figure \ref{Fig9}, which  agree well with results provided in \cite{liu2011high}.

\begin{figure}
	\centering
	\subfigure{
		\includegraphics[width=0.35\textwidth]{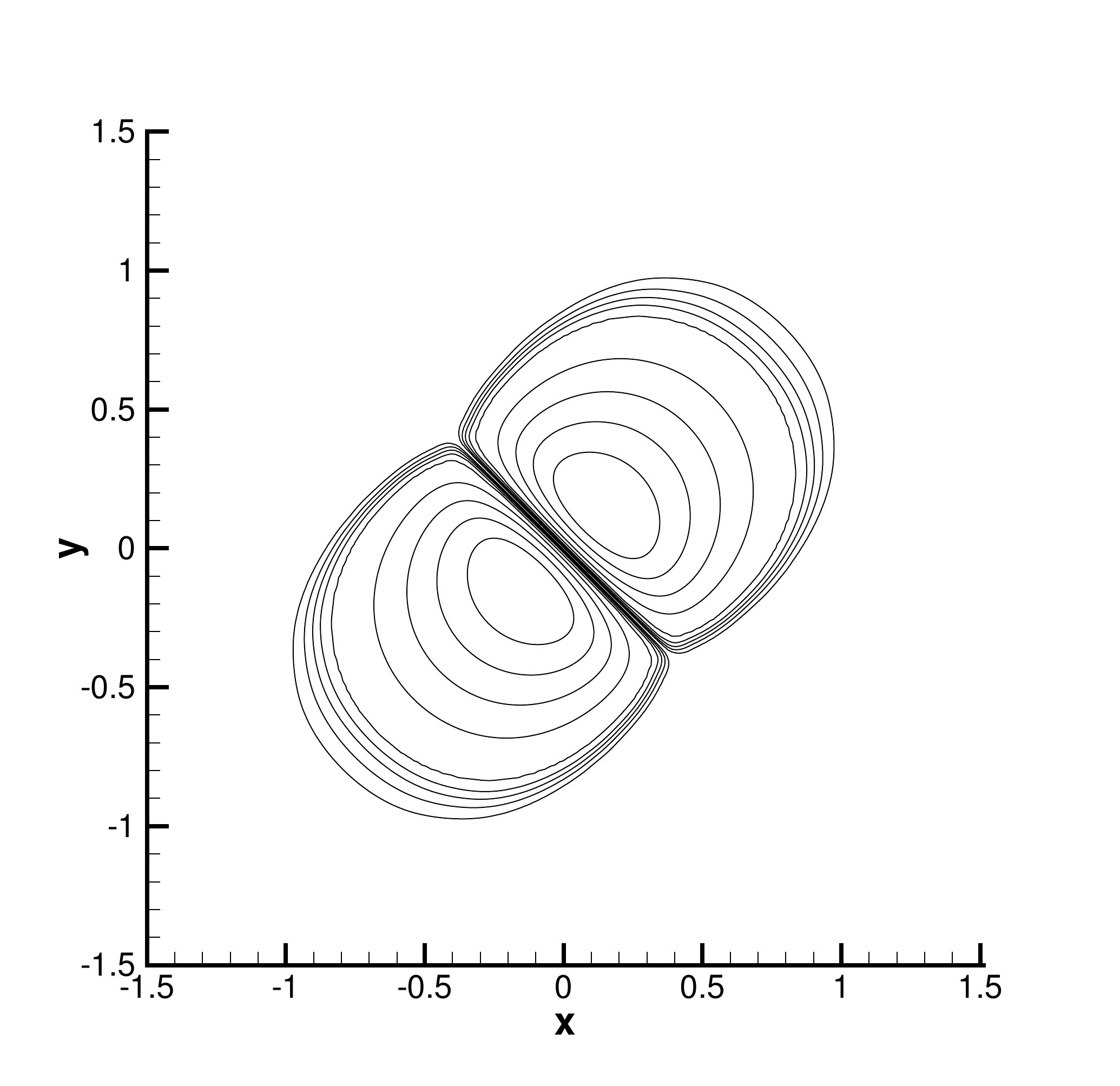}\label{Fig9.1}}
	\subfigure{
		\includegraphics[width=0.35\textwidth]{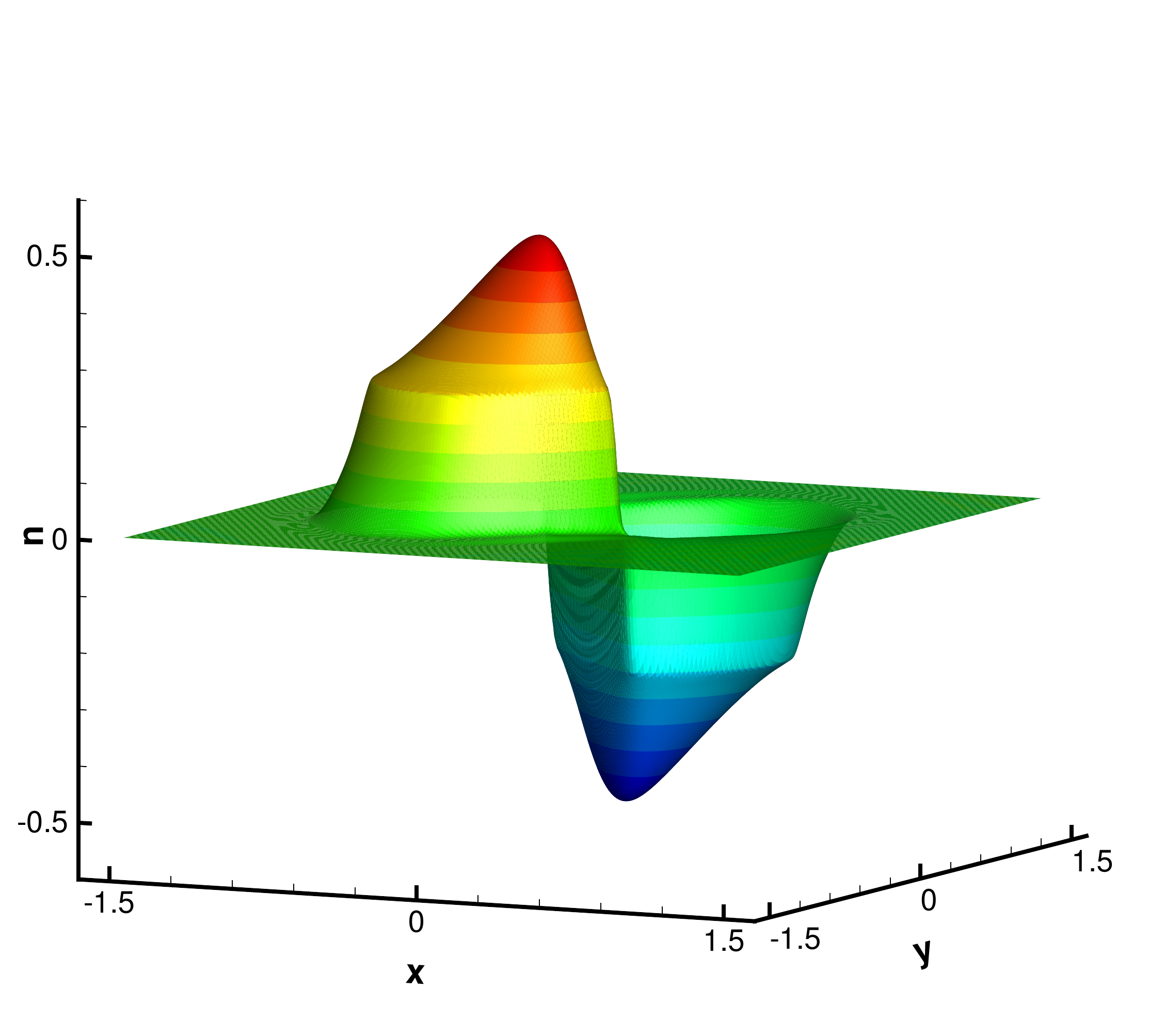}\label{Fig9.2}}
	\caption{\em Examplr 6: 2D strong degenerate parabolic equation. $k=3$. $T=0.5$. $CFL=0.5$. $200\times200$ grid points.}
	\label{Fig9}
\end{figure}

\vspace{0.5cm}

\textbf{Example 7.}
As the last example, we solve the two-dimensional Buckley-Leverett Equation
\begin{align}
\label{eq:BL2D}
u_{t}+f_{1}(u)_{x}+f_{2}(u)_{y}=\epsilon(u_{xx}+u_{yy}),
\end{align}
where $\epsilon=0.01$ and the flux functions are given as
$$f_{1}(u)=\frac{u^2}{u^2+(1-u)^2}, \ \ \ f_{2}(u)=(1-5(1-u)^2)f_{1}(u).$$
We compute the problem on $[-1.5,1.5]\times[-1.5,1.5]$, with the initial condition
\begin{align*}
u(x,y,0)=\left\{\begin{array}{ll}
1, & x^2+y^2<0.5,\\
0, & \text{otherwise}.\\
\end{array}
\right.
\end{align*}
Here, we only show the results computed by the third order scheme with $200\times200$ grid points in Figure \ref{Fig13}. The results agree with the those reported in  \cite{kurganov2000new}, demonstrating the effectiveness of the scheme for solving this challenging two-dimensional problem.

\begin{figure}
	\centering
	\subfigure{
		\includegraphics[width=0.35\textwidth]{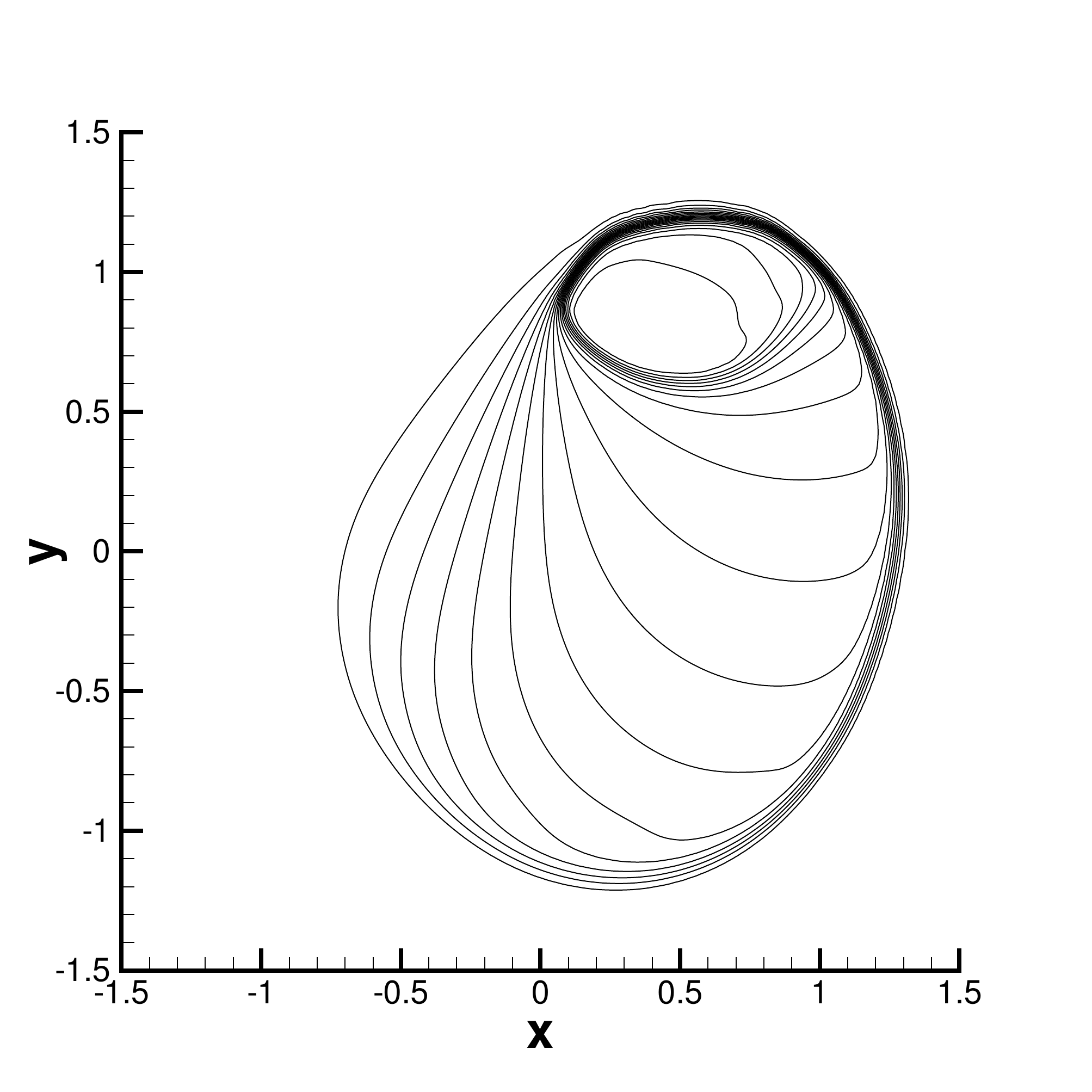}\label{Fig13.1}}
	\subfigure{
		\includegraphics[width=0.35\textwidth]{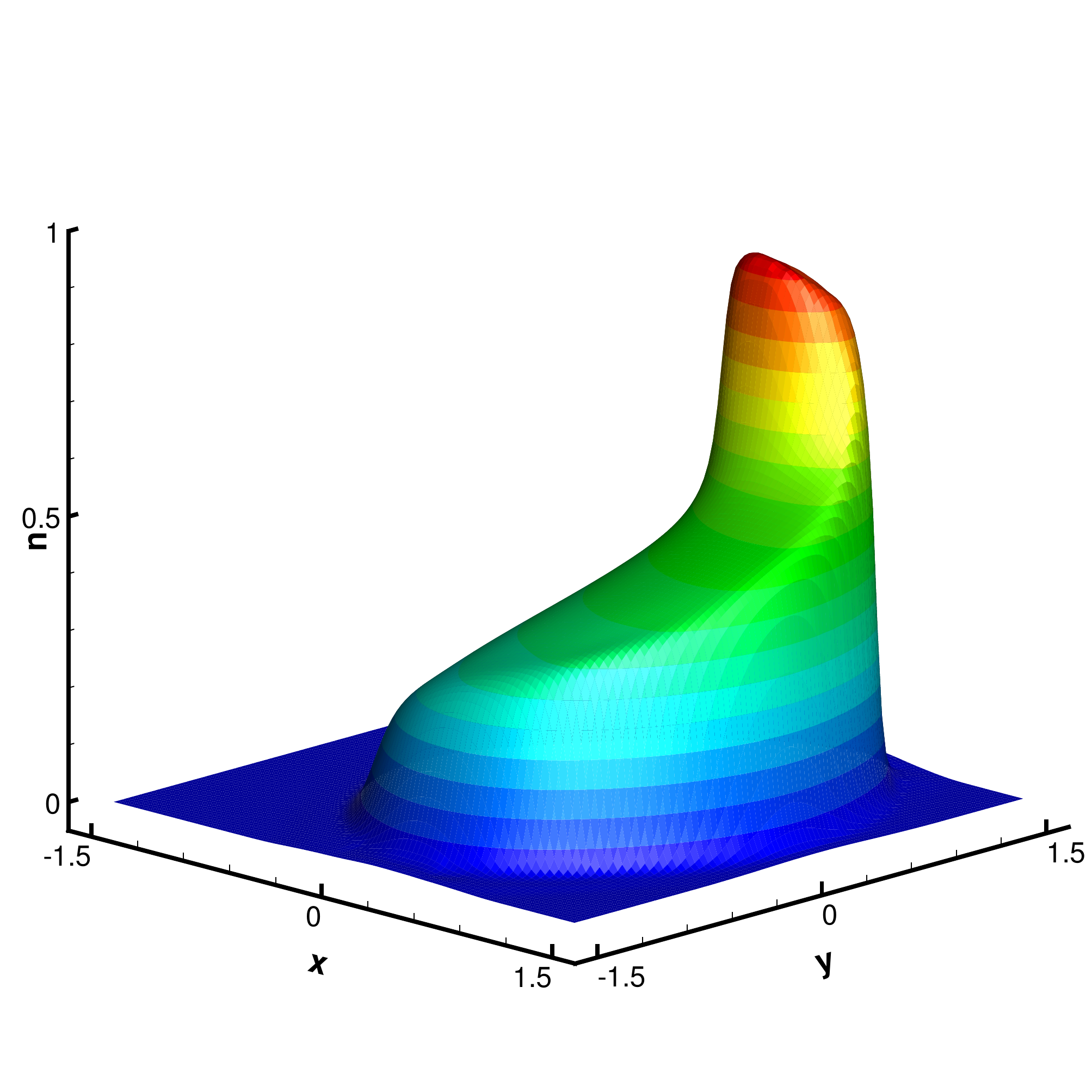}\label{Fig13.2}}
	\caption{\em Example 7: 2D Buckley-Leverett Equation. $k=3$. $T=0.5$. $CFL=0.5$. $200\times200$ grid points.}
	\label{Fig13}
\end{figure} 
\section{Conclusion}

In this paper, we proposed a novel numerical scheme to solve the nonlinear degenerate parabolic equations with non-smooth solutions. In such a framework, the spatial derivatives were represented as a special kernel based formulation of the solutions found in the method of lines transpose framework, and a fast summation algorithm was used to reduce the computational complexity of the kernel based approach to $O(N)$.  The kernel based formulation used in this work is as fast as explicit time stepping methods.  In time, we coupled the scheme with the high order explicit SSP RK method. Theoretical investigations indicated that the proposed scheme is unconditionally stable up to third order accuracy. Therefore, the new method allowed for much larger time step evolution compared with other explicit schemes with the same order accuracy. Moreover, to avoid spurious oscillations, a high order WENO methodology and a nonlinear filter are further employed. A collection of numerical tests verified the performance of the proposed scheme, demonstrating both its designed high order accuracy and the ability to produce non-oscillatory shock transitions for discontinuous solutions. 
Future work consists of extending the scheme to other equations and deal with genenral boundary conditions.

\appendix
\section{Proof of Lemma \ref{lem1}}

	Here, we only give the proof for the case of $\mathcal{D}_{0}$. For $\mathcal{D}_{L}$ and $\mathcal{D}_{R}$, the proof can be established by a similar idea.
		
	Using the definition of $I^{0}$ and integration by parts twice, we have
	{\small
	\begin{align}
	\label{eq:recur}
	I^{0}[v,\alpha](x)
	=v(x) + \frac{1}{\alpha^2}I^{0}[v_{xx},\alpha](x) -  \left(\frac{1}{2}v(a)-\frac{1}{2\alpha}v_{x}(a)\right) e^{-\alpha(x-a)} -\left(\frac{1}{2}v(b)+\frac{1}{2\alpha}v_{x}(b)\right)e^{-\alpha(b-x)}.
	\end{align}}
	Thus,
	\begin{small} 
	\begin{align*}
	\label{eq:recurD}
	\mathcal{D}_{0}[v,\alpha](x)
	=  -\frac{1}{\alpha^{2}}I^{0}[v_{xx},\alpha](x) -\left(A_{0}[v,\alpha]-\frac{1}{2}v(a)+\frac{1}{2\alpha}v_{x}(a)\right) e^{-\alpha(x-a)} 
	-\left(B_{0}[v,\alpha]-\frac{1}{2}v(b)-\frac{1}{2\alpha}v_{x}(b)\right) e^{-\alpha(b-x)}.
	\end{align*}
	\end{small}
	Here, $A_{0}[v,\alpha]$ and $B_{0}[v,\alpha]$ are obtained from the boundary treatment of $\mathcal{D}_{0}[v,\alpha]$ \eqref{eq:bcD0per}. Moreover, based on \eqref{eq:recur}, $A_{0}[v,\alpha]$ and $B_{0}[v,\alpha]$ can be rewritten as 
	\begin{align*}
	A_{0}[v,\alpha]=& \frac{1}{1-\mu} \left( \frac{1}{\alpha^2} I^{0}[v_{xx},\alpha](b) 
		-  \frac{1}{2} \left( v(a)-\frac{1}{\alpha}v_{x}(a) \right) \mu + \frac{1}{2}\left( v(b)-\frac{1}{\alpha}v_{x}(b) \right) \right),\\
	B_{0}[v,\alpha]=& \frac{1}{1-\mu} \left( \frac{1}{\alpha^2} I^{0}[v_{xx},\alpha](a) +  \frac{1}{2}\left( v(a)+\frac{1}{\alpha}v_{x}(a) \right) - \frac{1}{2}\left( v(b)+\frac{1}{\alpha}v_{x}(b) \right) \mu  \right).
	\end{align*}
	Therefore, we have
	\begin{align*}
		\mathcal{D}_{0}[v,\alpha](x)
		=&  -\frac{1}{\alpha^{2}}I^{0}[v_{xx},\alpha](x) -\frac{1}{\alpha^2}\frac{I^{0}[v_{xx},\alpha](b)}{1-\mu}e^{-\alpha(x-a)}
		-\frac{1}{\alpha^2}\frac{I^{0}[v_{xx},\alpha](a)}{1-\mu} e^{\alpha(b-x)}\\
		=& -\frac{1}{\alpha^2} \mathcal{L}_{0}^{-1}[v_{xx},\alpha](x)
		= -\frac{1}{\alpha^2} v_{xx}(x) +	\frac{1}{\alpha^2}\mathcal{D}_{0}[v_{xx},\alpha](x)
	\end{align*}
	Upon iterating this process $k$ times, we obtain that
	\begin{align*}
			\mathcal{D}_{0}[v,\alpha](x)
			=&  -\sum_{p=1}^{k}\frac{1}{\alpha^{2p}}\partial^{2p}_{x}v(x) -\frac{1}{\alpha^{2k+2}}\mathcal{L}^{-1}_{0}[\partial^{2k+2}_{x}v,\alpha](x).
	\end{align*}
			
\section{Formulation of WENO quadrature}

Here, we list the coefficients in WENO quadrature  approximating $J^{L}_{i}$ with six points:
\begin{align*}
c^{(0)}_{-3} =& \frac{6-6\nu+2\nu^2-(6-\nu^2 )e^{-\nu}}{6\nu^3}, \\
c^{(0)}_{-2} =& -\frac{6-8\nu+3\nu^2-(6-2\nu-2\nu^2)e^{-\nu}}{2\nu^3}, \\
c^{(0)}_{-1} =& \frac{6-10\nu+6\nu^2-(6-4\nu-\nu^2+2\nu^3)e^{-\nu}}{2\nu^3},\\
c^{(0)}_{0} =& -\frac{6-12\nu+11\nu^2-6\nu^3-(6-6\nu+2\nu^2)e^{-\nu}}{6\nu^3},\\
c^{(1)}_{-2} =& \frac{6-\nu^2-(6+6\nu+2\nu^2)e^{-\nu}}{6\nu^3},\\
c^{(1)}_{-1} =&-\frac{6-2\nu-2\nu^2-(6+4\nu-\nu^2-2\nu^3)e^{-nu}}{2\nu^3}, \\
c^{(1)}_{0} =& \frac{6-4\nu-\nu^2+2\nu^3-(6+2\nu-2\nu^2)e^{-\nu}}{2\nu^3}, \\
c^{(1)}_{1}=& -\frac{6-6\nu+2\nu^2-(6-\nu^2)e^{-\nu}}{6\nu^3}, \\
c^{(2)}_{-1} =& \frac{6+6\nu+2\nu^2-(6+12\nu+11\nu^2+6\nu^3)e^{-\nu}}{6\nu^3}, \\
c^{(2)}_{0} =& -\frac{6+4\nu-\nu^2-2\nu^3-(6+10\nu+6\nu^2)e^{-\nu}}{2\nu^3}, \\
c^{(2)}_{1}=& \frac{6+2\nu-2\nu^2-(6+8\nu+3\nu^2)e^{-\nu}}{2\nu^3},\\
c^{(2)}_{2}=& -\frac{6-\nu^2-(6+6\nu+2\nu^2)e^{-\nu}}{6\nu^3} u_{i+2}.
\end{align*}
And the linear weights are
\begin{align*}
& d_{0} =\frac{6-\nu^2-(6 + 6\nu + 2\nu^2 )e^{-\nu} }{3\nu ((2-\nu)-(2+\nu)e^{-\nu})}\\
& d_{2}=\frac{ 60-60\nu+15\nu^2+5\nu^3-3\nu^4-(60-15\nu^2+2\nu^4)e^{-\nu}}{10\nu^2(6-\nu^2-(6+6\nu+2\nu^2)e^{-\nu})}\\
& d_{1}=1-d_{0}-d_{2}
\end{align*}

\bibliographystyle{abbrv}
\bibliography{ref}

\begin{thebibliography}{10}

\bibitem{abedian2013high}
R.~Abedian, H.~Adibi, and M.~Dehghan.
\newblock {A high-order weighted essentially non-oscillatory (WENO) finite
  difference scheme for nonlinear degenerate parabolic equations}.
\newblock {\em Computer Physics Communications}, 184(8):1874--1888, 2013.

\bibitem{alt1983quasilinear}
H.~W. Alt and S.~Luckhaus.
\newblock Quasilinear elliptic-parabolic differential equations.
\newblock {\em Mathematische Zeitschrift}, 183(3):311--341, 1983.

\bibitem{aregba2004explicit}
D.~Aregba-Driollet, R.~Natalini, and S.~Tang.
\newblock Explicit diffusive kinetic schemes for nonlinear degenerate parabolic
  systems.
\newblock {\em Mathematics of computation}, 73(245):63--94, 2004.

\bibitem{aronson1986porous}
D.~G. Aronson.
\newblock The porous medium equation.
\newblock In {\em Nonlinear diffusion problems}, pages 1--46. Springer, 1986.

\bibitem{ascher1997implicit}
U.~Ascher, S.~Ruuth, and R.~Spiteri.
\newblock {Implicit-explicit Runge-Kutta methods for time-dependent partial
  differential equations}.
\newblock {\em Applied Numerical Mathematics}, 25(2-3):151--167, 1997.

\bibitem{barenblatt1952self}
G.~I. Barenblatt.
\newblock On self-similar motions of a compressible fluid in a porous medium.
\newblock {\em Akad. Nauk SSSR. Prikl. Mat. Meh}, 16(6):79--6, 1952.

\bibitem{barnes1986hierarchical}
J.~Barnes and P.~Hut.
\newblock {A hierarchical $O(N \log N)$ force-calculation algorithm}.
\newblock {\em Nature}, 324:446--449, 1986.

\bibitem{bessemoulin2012finite}
M.~Bessemoulin-Chatard and F.~Filbet.
\newblock A finite volume scheme for nonlinear degenerate parabolic equations.
\newblock {\em SIAM Journal on Scientific Computing}, 34(5):B559--B583, 2012.

\bibitem{borges2008improved}
R.~Borges, M.~Carmona, B.~Costa, and W.~S. Don.
\newblock An improved weighted essentially non-oscillatory scheme for
  hyperbolic conservation laws.
\newblock {\em Journal of Computational Physics}, 227(6):3191--3211, 2008.

\bibitem{buckley1942mechanism}
S.~E. Buckley, M.~Leverett, et~al.
\newblock Mechanism of fluid displacement in sands.
\newblock {\em Transactions of the AIME}, 146(01):107--116, 1942.

\bibitem{causley2017method}
M.~Causley, H.~Cho, and A.~Christlieb.
\newblock {Method of lines transpose: Energy gradient flows using direct
  operator inversion for phase field models}.
\newblock {\em SIAM Journal on Scientific Computing}, 39(5):B968--B992, 2017.

\bibitem{causley2016method}
M.~Causley, H.~Cho, A.~Christlieb, and D.~Seal.
\newblock {Method of Lines Transpose: High Order L-Stable $\mathcal{O}(N)$
  Schemes for Parabolic Equations Using Successive Convolution}.
\newblock {\em SIAM Journal on Numerical Analysis}, 54(3):1635--1652, 2016.

\bibitem{causley2014method}
M.~Causley, A.~Christlieb, B.~Ong, and L.~Van~Groningen.
\newblock {Method of lines transpose: An implicit solution to the wave
  equation}.
\newblock {\em Mathematics of Computation}, 83(290):2763--2786, 2014.

\bibitem{causley2014higher}
M.~F. Causley and A.~J. Christlieb.
\newblock {Higher order A-stable schemes for the wave equation using a
  successive convolution approach}.
\newblock {\em SIAM Journal on Numerical Analysis}, 52(1):220--235, 2014.

\bibitem{causley2013method}
M.~F. Causley, A.~J. Christlieb, Y.~Guclu, and E.~Wolf.
\newblock Method of lines transpose: A fast implicit wave propagator.
\newblock {\em arXiv preprint arXiv:1306.6902}, 2013.

\bibitem{cavalli2007high}
F.~Cavalli, G.~Naldi, G.~Puppo, and M.~Semplice.
\newblock High-order relaxation schemes for nonlinear degenerate diffusion
  problems.
\newblock {\em SIAM Journal on Numerical Analysis}, 45(5):2098--2119, 2007.

\bibitem{cheng2017asymptotic}
Y.~Cheng, A.~J. Christlieb, W.~Guo, and B.~Ong.
\newblock {An asymptotic preserving Maxwell solver resulting in the Darwin
  limit of electrodynamics}.
\newblock {\em Journal of Scientific Computing}, 71(3):959--993, 2017.

\bibitem{chou2006high}
C.-S. Chou and C.-W. Shu.
\newblock {High order residual distribution conservative finite difference WENO
  schemes for steady state problems on non-smooth meshes}.
\newblock {\em Journal of Computational Physics}, 214(2):698--724, 2006.

\bibitem{chou2007high}
C.-S. Chou and C.-W. Shu.
\newblock {High order residual distribution conservative finite difference WENO
  schemes for convection--diffusion steady state problems on non-smooth
  meshes}.
\newblock {\em Journal of Computational Physics}, 224(2):992--1020, 2007.

\bibitem{christlieb2016weno}
A.~Christlieb, W.~Guo, and Y.~Jiang.
\newblock {A WENO-based Method of Lines Transpose approach for Vlasov
  simulations}.
\newblock {\em Journal of Computational Physics}, 327:337--367, 2016.

\bibitem{duyn1982nonstationary}
C.~v. Duyn and L.~Peletier.
\newblock Nonstationary filtration in partially saturated porous media.
\newblock {\em Archive for Rational Mechanics and Analysis}, 78(2):173--198,
  1982.

\bibitem{gottlieb2005high}
S.~Gottlieb.
\newblock {On high order strong stability preserving Runge--Kutta and multi
  step time discretizations}.
\newblock {\em Journal of Scientific Computing}, 25(1):105--128, 2005.

\bibitem{gottlieb2001strong}
S.~Gottlieb, C.-W. Shu, and E.~Tadmor.
\newblock {Strong stability-preserving high-order time discretization methods}.
\newblock {\em SIAM review}, 43(1):89--112, 2001.

\bibitem{greengard1987fast}
L.~Greengard and V.~Rokhlin.
\newblock A fast algorithm for particle simulations.
\newblock {\em Journal of Computational Physics}, 73(2):325--348, 1987.

\bibitem{Hairer1996}
E.~Hairer and G.~Wanner.
\newblock {\em {Solving ordinary differential equations II: stiff and
  differential algebraic problems, 2nd ed.}}
\newblock Springer-Verlag, 1996.

\bibitem{hajipour2012high}
M.~Hajipour and A.~Malek.
\newblock High accurate nrk and mweno scheme for nonlinear degenerate parabolic
  pdes.
\newblock {\em Applied Mathematical Modelling}, 36(9):4439--4451, 2012.

\bibitem{jia2008krylov}
J.~Jia and J.~Huang.
\newblock Krylov deferred correction accelerated method of lines transpose for
  parabolic problems.
\newblock {\em Journal of Computational Physics}, 227(3):1739--1753, 2008.

\bibitem{jiang1996efficient}
G.-S. Jiang and C.-W. Shu.
\newblock Efficient implementation of weighted eno schemes.
\newblock {\em Journal of computational physics}, 126(1):202--228, 1996.

\bibitem{ketcheson2011step}
D.~I. Ketcheson.
\newblock Step sizes for strong stability preservation with downwind-biased
  operators.
\newblock {\em SIAM Journal on Numerical Analysis}, 49(4):1649--1660, 2011.

\bibitem{kropinski2011fast}
M.~C.~A. Kropinski and B.~D. Quaife.
\newblock Fast integral equation methods for rothe�s method applied to the
  isotropic heat equation.
\newblock {\em Computers \& Mathematics with Applications}, 61(9):2436--2446,
  2011.

\bibitem{kurganov2000new}
A.~Kurganov and E.~Tadmor.
\newblock New high-resolution central schemes for nonlinear conservation laws
  and convection--diffusion equations.
\newblock {\em Journal of Computational Physics}, 160(1):241--282, 2000.

\bibitem{liu2009positivity}
Y.~Liu, C.-W. Shu, and M.~Zhang.
\newblock On the positivity of linear weights in weno approximations.
\newblock {\em Acta Mathematicae Applicatae Sinica, English Series},
  25(3):503--538, 2009.

\bibitem{liu2011high}
Y.~Liu, C.-W. Shu, and M.~Zhang.
\newblock High order finite difference weno schemes for nonlinear degenerate
  parabolic equations.
\newblock {\em SIAM Journal on Scientific Computing}, 33(2):939--965, 2011.

\bibitem{muskat1937flow}
M.~Muskat, R.~D. Wyckoff, et~al.
\newblock Flow of homogeneous fluids through porous media.
\newblock 1937.

\bibitem{otto1996l1}
F.~Otto.
\newblock L1-contraction and uniqueness for quasilinear elliptic--parabolic
  equations.
\newblock {\em Journal of differential equations}, 131(1):20--38, 1996.

\bibitem{salazar2000theoretical}
A.~Salazar, M.~Raydan, and A.~Campo.
\newblock Theoretical analysis of the exponential transversal method of lines
  for the diffusion equation.
\newblock {\em Numerical Methods for Partial Differential Equations},
  16(1):30--41, 2000.

\bibitem{schemann1998adaptive}
M.~Schemann and F.~A. Bornemann.
\newblock An adaptive rothe method for the wave equation.
\newblock {\em Computing and Visualization in Science}, 1(3):137--144, 1998.

\bibitem{shu1998essentially}
C.-W. Shu.
\newblock Essentially non-oscillatory and weighted essentially non-oscillatory
  schemes for hyperbolic conservation laws.
\newblock In {\em Advanced numerical approximation of nonlinear hyperbolic
  equations}, pages 325--432. Springer, 1998.

\bibitem{shu2002survey}
C.-W. Shu.
\newblock A survey of strong stability preserving high order time
  discretizations.
\newblock {\em Collected lectures on the preservation of stability under
  discretization}, 109:51--65, 2002.

\bibitem{shu2009high}
C.-W. Shu.
\newblock High order weighted essentially nonoscillatory schemes for convection
  dominated problems.
\newblock {\em SIAM review}, 51(1):82--126, 2009.

\bibitem{shu1988efficient}
C.-W. Shu and S.~Osher.
\newblock {Efficient implementation of essentially non-oscillatory
  shock-capturing schemes}.
\newblock {\em Journal of Computational Physics}, 77(2):439--471, 1988.

\bibitem{zel1950towards}
Y.~B. Zel’dovich and A.~Kompaneets.
\newblock Towards a theory of heat conduction with thermal conductivity
  depending on the temperature.
\newblock {\em Collection of papers dedicated to 70th birthday of Academician
  AF Ioffe, Izd. Akad. Nauk SSSR, Moscow}, pages 61--71, 1950.

\bibitem{zhang2009numerical}
Q.~Zhang and Z.-L. Wu.
\newblock Numerical simulation for porous medium equation by local
  discontinuous galerkin finite element method.
\newblock {\em Journal of Scientific Computing}, 38(2):127--148, 2009.

\end{thebibliography}

\end{document}